\newtheorem{theorem}{Theorem}[section]
\newtheorem{example}[theorem]{Example}
\newtheorem{lemma}[theorem]{Lemma}
\newtheorem{remark}[theorem]{Remark}
\newtheorem{proposition}[theorem]{Proposition}
\newtheorem{corollary}[theorem]{Corollary}
\newtheorem{conjecture}[theorem]{Conjecture}
\newtheorem{definition}[theorem]{Definition}
\newtheorem{question}[theorem]{Question}
\title{Discrete Equidecomposability and Ehrhart Theory of Polygons}
\begin{document}

\author{Paxton Turner}
\address{Department of Mathematics, Louisiana State University, Baton Rouge, Louisiana 70803}
\email{pturne7@tigers.lsu.edu}

\author{Yuhuai (Tony) Wu}
\address{Department of Mathematics, University of New Brunswick, Fredericton, New Brunswick E3B4N9}
\email{wu.yuhuai0206@unb.ca}

\thanks{This research was supported by an NSF grant to Brown's Summer@ICERM program.}

\begin{abstract}

Motivated by questions from Ehrhart theory, we present new results on discrete equidecomposability. Two rational polygons $P$ and $Q$ are said to be \emph{discretely equidecomposable} if there exists a piecewise affine-unimodular bijection (equivalently, a piecewise affine-linear bijection that preserves the integer lattice $\mathbb{Z} \times \mathbb{Z}$) from $P$ to $Q$. In this paper, we primarily study a particular version of this notion which we call \emph{rational finite discrete equidecomposability}. We construct triangles that are Ehrhart equivalent but not rationally finitely discretely equidecomposable, thus providing a partial negative answer to a question of Haase--McAllister on whether Ehrhart equivalence implies discrete equidecomposability. Surprisingly, if we delete an edge from each of these triangles, there exists an \emph{infinite} rational discrete equidecomposability relation between them. Our final section addresses the topic of infinite equidecomposability with concrete examples and a potential setting for further investigation of this phenomenon.

\end{abstract}

\maketitle

\tableofcontents

\begin{section}{Introduction}
\label{sec:introduction}

We review some facts from Ehrhart theory to motivate our central concept of study, discrete equidecomposability. Some basic notation is also developed in this section. The introduction closes with an outline of our results and the structure of this paper.

\begin{subsection}{Ehrhart Theory}
Ehrhart theory, developed by Eug\`ene Ehrhart in the paper \cite{ehrhart} is the study of enumerating integer lattice points in integral dilates of (not necessarily convex) polytopes. Intuitively, this can be viewed as studying the \emph{discrete area} of dilates of a polytope. Given $t \in \mathbb{N}$, the $t$'th dilate of $P$ is the set $tP = \{tp \, | \, p \in P \}$. Here $tp$ denotes scalar multiplication of the point $p$ by $t$. With this set-up, we can define the central object of Ehrhart theory, the function $\mathrm{ehr}_P(t)$ that counts the number of lattice points in $tP$:

\begin{equation}
\label{eqn:ehrhart}
\mathrm{ehr}_P(t) := |\{tP \cap \mathbb{Z} \times \mathbb{Z} \}|.
\end{equation}

We say that $P$ is an \emph{integral polytope}, or simply, $P$ is \emph{integral} if its vertices lie in the lattice $\mathbb{Z} \times \mathbb{Z}$. Similarly, a \emph{rational polytope} has all of its vertices given by points whose coordinates are rational. If $P$ is rational, the \emph{denominator} of $P$ is defined to be the least natural number $N$ such that $NP$ is an integral polytope. Equivalently, the denominator is the least common multiple of the set of denominators of the coordinates of the vertices of $P$.

A fundamental theorem due to Ehrhart states that $\mathrm{ehr}_P(t)$ has a particularly nice structure for rational polytopes. In this case, $\mathrm{ehr}_P(t)$ is a \emph{quasi-polynomial}.\footnote{The same cannot be said, however, for polygons with irrational vertices. See the analysis by Hardy and Littlewood of lattice points in a right-angled triangle \cite{hardy}.} A quasi-polynomial $q$ is a function of the following form. The $q_i$ below are all polynomials of the same degree known as the \emph{constituents} of $q$, and $d$ is a positive integer known as the \emph{period} of $q$.

\begin{displaymath}
   q(t) = \left\{
     \begin{array}{lr}
       q_1(t) & : t \equiv 1 \mod d \\
       q_2(t) & : t \equiv 2 \mod d \\
       \hspace{1.3 cm} \vdots \\
       q_j(t) & : t \equiv j \mod d \\
       \hspace{1.3 cm} \vdots \\
       q_d(t) & : t \equiv d \mod d \\
     \end{array}
   \right.
\end{displaymath}

This key theorem of Ehrhart is stated below.

\begin{theorem}[Ehrhart]
Let $P$ be a rational polytope of denominator $d$. Then $\mathrm{ehr}_P(t)$ is a quasi-polynomial of period $d$.
\end{theorem}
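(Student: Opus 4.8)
The plan is to prove Ehrhart's theorem for rational polytopes by reducing to the case of simplices and then counting lattice points in dilates of a fundamental parallelepiped. Since any rational polytope can be triangulated into rational simplices whose vertices are among the vertices of $P$, and since the lattice-point count of a union can be assembled from the counts of the pieces via inclusion-exclusion along shared faces, it suffices to establish the quasi-polynomiality (with the correct period) for a single rational simplex and check that taking integer combinations of quasi-polynomials of period dividing $d$ again yields a quasi-polynomial of period dividing $d$.

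\begin{proof}[Proof sketch]
First I would reduce to simplices. Triangulate $P$ into finitely many rational simplices using only the vertices of $P$; every vertex of every simplex then has coordinates with denominator dividing $d$, so each simplex has denominator dividing $d$. Writing $\mathrm{ehr}_P(t)$ as an alternating sum of the Ehrhart counts of these simplices and their common faces (inclusion-exclusion), and noting that a face of a denominator-$d$ simplex again has denominator dividing $d$, it is enough to prove the theorem for a single rational simplex and to observe that the set of quasi-polynomials of period dividing $d$ is closed under addition and subtraction.

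The heart of the argument is the simplex case, which I would handle by the standard coning construction. Given a rational $d$-simplex $\Delta$ with vertices $v_0, \dots, v_n$ in $\mathbb{R}^n$, lift it one dimension by placing it at height $1$: set $w_i = (v_i, 1) \in \mathbb{R}^{n+1}$, and let $C$ be the cone generated by $w_0, \dots, w_n$. The lattice points of $C$ at height $t$ are exactly the lattice points of the dilate $t\Delta$, so $\mathrm{ehr}_\Delta(t)$ equals the number of lattice points of $C$ lying in the hyperplane of height $t$. I would then decompose $C$ using the half-open fundamental parallelepiped
\begin{displaymath}
\Pi = \Bigl\{ \textstyle\sum_{i=0}^{n} \lambda_i w_i \;:\; 0 \le \lambda_i < 1 \Bigr\},
\end{displaymath}
so that every lattice point of $C$ is uniquely $p + \sum_i m_i w_i$ with $p \in \Pi \cap \mathbb{Z}^{n+1}$ and $m_i \in \mathbb{Z}_{\ge 0}$. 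Because the vertices have denominator dividing $d$, scaling by $d$ makes $dw_i$ integral, and one checks that $\Pi$ contains only finitely many lattice points whose heights are controlled; the key periodicity fact is that the last coordinate (height) of any lattice point in $\Pi$ is an integer in $\{0, 1, \dots, n\}$, and the denominators force the available residues to repeat with period $d$.

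Assembling the generating function, I would show
\begin{displaymath}
\sum_{t \ge 0} \mathrm{ehr}_\Delta(t)\, x^t
= \frac{\sum_{p \in \Pi \cap \mathbb{Z}^{n+1}} x^{\mathrm{ht}(p)}}{(1 - x^{d})^{\,n+1}},
\end{displaymath}
where $\mathrm{ht}(p)$ denotes the height coordinate and the denominator records the generators $w_i$ after clearing to denominator $d$. The numerator is a polynomial, and a rational generating function of this shape has coefficients agreeing with a quasi-polynomial whose period divides the exponent $d$ appearing in $(1 - x^{d})$; extracting coefficients via partial fractions gives constituents $q_1, \dots, q_d$ that are genuine polynomials of the same degree $n$. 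I expect the main obstacle to be the bookkeeping that pins the period down to exactly $d$ rather than merely a multiple of it: one must verify that the reduction to denominator $d$ in the coning step, combined with the inclusion-exclusion over faces, does not inflate the period, and that the leading terms of the constituents coincide so that the degree is uniform. This amounts to a careful analysis of which residues mod $d$ actually occur among the heights of lattice points in $\Pi$, together with a standard argument that the smallest period of the resulting quasi-polynomial divides the denominator of $P$.
\end{proof}
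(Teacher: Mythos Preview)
The paper does not actually prove this theorem. It is stated as background and attributed to Ehrhart, with the sentence ``An exposition of the proof of this theorem is available in Ehrhart's original paper \cite{ehrhart} and also the textbook \cite{beck} by Beck and Robins'' immediately following the statement. So there is nothing in the paper to compare your argument against.

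That said, your sketch is essentially the standard coning/generating-function proof from Beck--Robins, and the overall architecture (triangulate, cone over each simplex, tile the cone by translates of a fundamental parallelepiped, read off quasi-polynomiality from the rational generating function) is correct. Two small points are worth tightening. First, there is a mismatch between your definition of $\Pi$ and your generating function: you define $\Pi$ using the generators $w_i$ (each of height $1$), which would give denominator $(1-x)^{n+1}$, but you then write $(1-x^d)^{n+1}$, which corresponds to the parallelepiped for the \emph{scaled} generators $dw_i$. Either choice works, but you should use one consistently; the $dw_i$ version is cleaner for reading off the period. Second, your closing worry about ``pinning the period down to exactly $d$ rather than merely a multiple of it'' is misplaced. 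The theorem only asserts that $d$ is \emph{a} period, not the minimal one; indeed the paper immediately afterward discusses period collapse, where the minimal period is strictly smaller than $d$. Your generating-function argument already yields period dividing $d$, which is exactly what is required.
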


An exposition of the proof of this theorem is available in Ehrhart's original paper \cite{ehrhart} and also the textbook \cite{beck} by Beck and Robins.

We remark that the denominator of $P$ is not necessarily the \emph{smallest} period of $\mathrm{ehr}_P(t)$. In fact, there is a result due to McAllister and Woods showing that given $d$ and $k|d$, there exists a denominator $d$ rational polytope of arbitrary dimension such that $\mathrm{ehr}_P(t)$has period $k$  (see Theorem 2.2 of \cite{woods}). When $\mathrm{ehr}_P(t)$ has period $k$ strictly smaller than the denominator of $P$, it is said that $P$ has \emph{period collapse} $k$. Period collapse is still a mysterious phenomenon. For discrete geometric/combinatorial directions, see the papers (\cite{woods, mcallister, haase}) and for connections with representations of Lie algebras, see the papers (\cite{deloera, deloera1, derksen, kirillov}).

Given $S \subset \mathbb{R}^n$, let $\mathrm{Conv}(S)$ denote the convex hull of the set $S$. A motivating instance of period collapse for our purposes is given by the following example due to McAllister and Woods \cite{woods}. Let $T = \mathrm{Conv}((0,0), (1,\frac{2}{3}), (3,0))$. Then McAllister and Woods show that $T$ has denominator $3$, but $\mathrm{ehr}_T(t)$ has period $1$. In this sense, $T$ behaves like an integral polygon even though its vertices live in $\frac{1}{3} \mathbb{Z} \times \frac{1}{3} \mathbb{Z}$.\footnote{Polygons with period collapse $1$ are referred to as \emph{pseudo-integral polygons} or \emph{PIP}s in the paper \cite{mcallister} by McAllister.}

\begin{figure}[H]
    \centering
    \includegraphics[keepaspectratio=true, width=13cm]{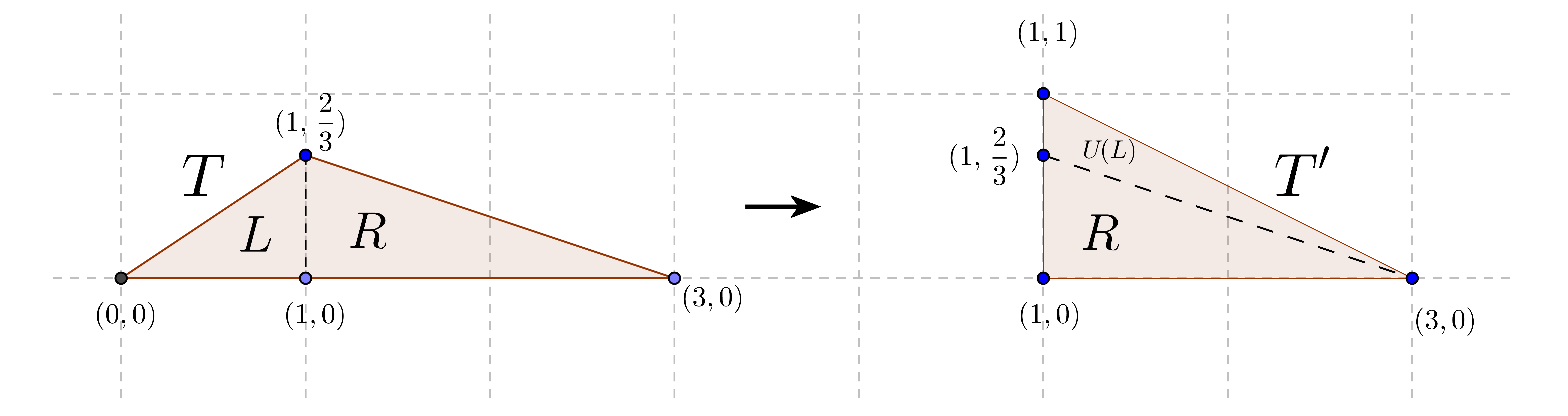}
    \caption{\small The cut-and-paste map from $T$ to $T'$. }
    \label{fig:equiexample}
\end{figure}

As such, we suspect that $T$ is, in some sense, \emph{equivalent} to an \emph{integral} polygon $T'$. For this example, this is true and the correct notion of equivalence is the concept of \emph{discrete equidecomposability} as defined in \cite{haase} and \cite{kantor1}.\footnote{However, there do exist denominator $d$ polygons having period collapse $1$ that are not discretely equidecomposable with any integral polygon. See Example 2.4 of \cite{haase}.} This means, informally speaking, that we can cut $T$ into pieces and map each piece by an affine-linear $\mathbb{Z} \times \mathbb{Z}$-preserving transformation so that the final result of this process is an integral polygon $T'$. The described procedure is demonstrated in Figure \ref{fig:equiexample}.

Let $E$ be the segment from $(1,0)$ to $(1,1/3)$, let $L = \mathrm{Conv}((0,0), (1,0), (1, 2/3)) \backslash E$, and let $R = T - L$. Then we map $T$ to $T'$ by leaving $R$ fixed and mapping $L$ by the action of the matrix $U = \begin{pmatrix} 1 & -2 \\ -1 & 1 \end{pmatrix}$.

The next section provides general definitions of discrete equidecomposability.

\end{subsection}

\begin{subsection}{Discrete Equidecomposability}

We restrict our attention hereafter to (not necessarily convex) polygons in $\mathbb{R}^2$, as all of our results concern this case. However, the concept of discrete equidecomposability and all the questions posed in this section make sense in higher dimensions.

The notion of discrete equidecomposability captures two sorts of symmetries: (1) affine translation and (2) lattice-preserving linear transformations. Note that both (1) and (2) preserve the number of lattice points in a region and, hence, Ehrhart quasi-polynomials. The \emph{affine unimodular group} $G := GL_2(\mathbb{Z}) \rtimes \mathbb{Z}^2$ with the following action on $\mathbb{R}^2$ captures both properties. \\

\begin{center}
$x \mapsto gx := Ux + v$

$x \in \mathbb{R}^2, \, g = U \rtimes v \in G = GL_2(\mathbb{Z}) \rtimes \mathbb{Z}^2$.
\end{center}

Note that $GL_2(\mathbb{Z})$ is precisely the set of integer $2 \times 2$ matrices with determinant $\pm 1$. Two regions $R_1$ and $R_2$ (usually polygons for our purposes) are said to be $G$\emph{-equivalent} if they are in the same $G$-orbit, that is, $G R_1 = G R_2$. Also, a $G$\emph{-map} is a transformation on $\mathbb{R}^2$ induced by an element $g \in G$. We slightly abuse notation and refer to both the element and the map induced by the element as $g$.

In the same manner of \cite{haase}, we define the notion of discrete equidecomposability in $\mathbb{R}^2$.\footnote{The source \cite{haase} defines this notion in $\mathbb{R}^n$ and labels it $GL_n(\mathbb{Z})$\emph{-equidecomposability}. See definition 3.1.}

\begin{definition}[discrete equidecomposability]
\label{def:equi}

Let $P, Q \subset \mathbb{R}^2$. Then $P$ and $Q$ are \emph{discretely equidecomposable} if there exist open simplices $T_1, \ldots, T_r$ and $G$-maps $g_1, \ldots, g_r$ such that

\begin{equation*}
P = \bigsqcup _{i= 1} ^r T_i \, \, \mathrm{and} \, \, Q = \bigsqcup _{i = 1} ^r g_i(T_i).
\end{equation*}

Here, $\bigsqcup$ indicates disjoint union.

\end{definition}

If $P$ is a polygon, we refer to the collection of open simplices $\{ T_1, \ldots, T_r \}$ as a \textit{simplicial decomposition} or \textit{triangulation}. It is helpful to observe how Figure \ref{fig:equiexample} is an example of this definition.

If $P$ and $Q$ are discretely equidecomposable, then there exists a map $\mathcal{F}$ which we call the \emph{equidecomposability relation} that restricts to the specified $G$-map on each open piece of $P$. Precisely, that is, $\mathcal{F}|_{T_i} = g_i$. The map $\mathcal{F}$ is thus a piecewise $G$-bijection. Observe from the definition that the map $\mathcal{F}$ preserves the Ehrhart quasi-polynomial; hence $P$ and $Q$ are Ehrhart-equivalent if they are discretely equidecomposable.

Our results only concern \emph{rational} discrete equidecomposability, the case of Definition \ref{def:equi} where all simplices $T_i$ are rational. In Section \ref{sec:infinite} we will also present an example of an equidecomposability relation with an infinite number of rational simplices, referring to such a map as an \emph{infinite} rational equidecomposability relation.

\end{subsection}

\begin{subsection}{Results}

The paper \cite{haase} presented an interesting question that motivated this research.

\begin{question}
\label{que:ehr}
Are Ehrhart-equivalent rational polytopes always discretely equidecomposable?\footnote{Labeled Question 4.1 in \cite{haase}.}
\end{question}


We prove that the answer to Question \ref{que:ehr} is ``no'' in the case of \textit{rational} discrete equidecomposability. In Section \ref{sec:weight}, we present two denominator $5$ minimal triangles (as defined in Section \ref{sec:minimal}) that are Ehrhart equivalent but not rationally discretely equidecomposable. We prove this by means of a weighting system on the edges of denominator $d$ polygons that serves as an invariant for rational discrete equidecomposability.

\begin{theorem}[Main Result 1]
If two polygons are Ehrhart equivalent, then it is not always the case that they are rationally discretely equidecomposable.
\end{theorem}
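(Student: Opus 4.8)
The plan is to prove the theorem by exhibiting an explicit counterexample: a pair of rational triangles that share the same Ehrhart quasi-polynomial yet cannot be related by any finite rational discrete equidecomposability. Concretely, I would produce two denominator-$5$ triangles $P$ and $Q$ with vertices in $\frac{1}{5}\mathbb{Z} \times \frac{1}{5}\mathbb{Z}$, chosen so that both are \emph{minimal} in the sense of Section~\ref{sec:minimal}; minimality is what makes the obstruction visible, since a minimal triangle admits no nontrivial subdivision that simplifies its boundary data. The first task is then to verify that $P$ and $Q$ are Ehrhart equivalent. For triangles of small denominator this is a finite computation: one evaluates $\mathrm{ehr}_P(t)$ and $\mathrm{ehr}_Q(t)$ directly from \eqref{eqn:ehrhart}, using Pick-type bookkeeping on the boundary and interior lattice points of each dilate, and checks that the two quasi-polynomials agree constituent by constituent modulo their common period.

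The heart of the argument is an invariant of rational discrete equidecomposability that separates $P$ from $Q$. My approach is to attach to each rational polygon a \emph{weight} $w(P)$ built from its boundary: to every fractional lattice point on an edge (a point of $\frac{1}{d}\mathbb{Z} \times \frac{1}{d}\mathbb{Z}$ not lying in $\mathbb{Z} \times \mathbb{Z}$) one assigns a local contribution recording its class, and these are accumulated along each edge in an orientation-sensitive way to produce a total weight in a suitable abelian group (for instance a quotient of $\mathbb{Q}/\mathbb{Z}$, or $\mathbb{Z}/d$). Two properties are needed. First, $w$ must be invariant under every $G$-map: since an element of $G = GL_2(\mathbb{Z}) \rtimes \mathbb{Z}^2$ preserves $\mathbb{Z} \times \mathbb{Z}$, it permutes fractional classes compatibly. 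Second, when a polygon is cut into open simplices along rational segments and the images are reassembled, the contributions of all \emph{interior} edges must cancel, each interior segment being traversed twice with opposite orientation, so that only the outer boundary survives — this is the familiar Dehn-invariant-style cancellation from scissors congruence. Granting both properties, if $P = \bigsqcup_i T_i$ and $Q = \bigsqcup_i g_i(T_i)$ as in Definition~\ref{def:equi}, then $w(P) = w(Q)$, so producing $P, Q$ with $w(P) \neq w(Q)$ completes the proof.

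The main obstacle I anticipate is making the weight genuinely well-defined and establishing the cancellation property rigorously. The difficulty is bookkeeping: the simplices $T_i$ are \emph{open}, so the lattice points on a shared edge must be counted exactly once and with consistent orientation across the two adjacent pieces, and a $G$-map may reverse orientation since $\det U = \pm 1$. I would handle this by defining $w$ as a sum over oriented primitive edge-segments with a $G$-equivariant local term, and then verifying invariance under the two elementary moves that generate any equidecomposition: subdividing a single edge, and applying one $G$-map to a single simplex. A secondary subtlety is confirming that the candidate triangles are truly non-equidecomposable, rather than merely that this particular invariant fails to detect a relation: here the argument is conclusive precisely because any rational equidecomposability relation \emph{must} preserve $w$, so a strict inequality $w(P) \neq w(Q)$ rules out every such relation. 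Finally I would recheck minimality to ensure that no intermediate subdivision could alter the boundary weight and thereby evade the invariant.
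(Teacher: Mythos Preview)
Your overall strategy matches the paper's exactly: exhibit two denominator-$5$ minimal triangles with equal Ehrhart quasi-polynomial but distinguished by a boundary weight invariant preserved under rational equidecomposability. The gap is in the \emph{type} of invariant you propose. You describe a single element $w(P)$ of an abelian group, obtained by summing orientation-sensitive edge contributions so that ``contributions of all interior edges cancel, each interior segment being traversed twice with opposite orientation.'' An invariant with this additive cancellation structure is precisely the paper's \emph{signed} weight $SW_{d'}$ (Lemma~\ref{lem:charge}), and the paper computes explicitly in Example~\ref{exl:charge} that $SW_5(T_{(1,2)}) = SW_5(T_{(1,4)})$. So a Dehn-style additive invariant cannot separate the two triangles you would use as your counterexample.

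What the paper actually needs, and what your sketch does not supply, is a second invariant of a genuinely different character: the \emph{unsigned} weight $UW_{d'}$, which records for each residue $i$ the total number of boundary edges of weight $\pm i$ without regard to sign. This quantity is \emph{not} additive over a triangulation and does \emph{not} follow from interior-edge cancellation; its invariance (Lemma~\ref{lem:current}) is proved by a counting argument relating the number of $\pm i$ edges in a triangulation to the numbers $|\Delta_n^{\pm i}|$ of facets carrying exactly $n$ such edges, both of which are preserved by the facet-and-edge bijection in Remark~\ref{rmk:preservation}. Only by combining $SW_{d'}$ and $UW_{d'}$ (Lemma~\ref{lem:weight_agree}) does one recover the full multiset $W_{d'}(P)$, and it is $UW_5$ that finally separates $T_{(1,2)}$ from $T_{(1,4)}$ (Example~\ref{exl:current}). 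Your plan would go through if you replaced the abelian-group-valued invariant by this multiset-valued one and supplied the separate, non-cancellation argument for the unsigned part.
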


However, it is possible that Ehrhart equivalence implies discrete equidecomposability if we allow irrational simplices or an infinite number of simplices in our decompositions. We have the following result in the realm of infinite rational discrete equidecomposability.

\begin{theorem}[Main Result 2]
There exists an infinite family of pairs of triangles $\{S_i, T_i\}$ with the following properties:

\begin{enumerate}
\item $S_i$ and $T_i$ are Ehrhart equivalent but not rationally discrete equidecomposable.
\item If a particular closed edge is deleted from both $S_i$ and $T_i$, then the modified triangles are infinitely rationally discretely equidecomposable.
\end{enumerate}
\end{theorem}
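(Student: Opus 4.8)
The statement requires constructing an *infinite family* of triangle pairs, so the plan is to first pin down a single explicit pair and then show the construction parametrizes. I would begin by exhibiting the denominator-$5$ minimal triangles $S$ and $T$ from the Main Result 1 construction in Section~\ref{sec:weight} as the base case $\{S_1, T_1\}$; property~(1) for the whole family is then inherited directly from the edge-weighting invariant developed there, provided the family is built so that the obstruction persists. The natural way to generate the family is to scale or shear the base pair by a sequence of $G$-maps (or by varying a denominator parameter $d$), and I would verify that Ehrhart equivalence is preserved while the weighting invariant continues to differ on $S_i$ and $T_i$. This reduces the bulk of property~(1) to the already-established machinery, so the real content lies in property~(2).

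For property~(2), the key idea is that deleting a closed edge removes the obstruction carried by the weighting system---the weighting invariant is supported on edges, so excising the offending edge from both triangles destroys exactly the quantity that blocked finite equidecomposability. The plan is then to construct an explicit infinite rational equidecomposability relation $\mathcal{F}$ between the half-open triangles $S_i \setminus e_S$ and $T_i \setminus e_T$. Following the spirit of Figure~\ref{fig:equiexample}, I would set up an infinite sequence of rational simplices $\{T_i^{(k)}\}_{k \geq 1}$ accumulating toward the deleted edge, together with $G$-maps $g_i^{(k)}$ matching each piece of $S_i \setminus e_S$ to a piece of $T_i \setminus e_T$. The most delicate requirement is that the images $\bigsqcup_k g_i^{(k)}(T_i^{(k)})$ tile $T_i \setminus e_T$ exactly---disjointly and with no gaps---which is why the edge must be deleted: the accumulation point of the tiling lies on $e_S$, and only by working in the half-open region do the infinitely many images fit together into a genuine bijection.

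\emph{The main obstacle} I anticipate is verifying that this infinite decomposition is simultaneously a disjoint cover on both sides and that each $g_i^{(k)}$ is a legitimate $G$-map (integer matrix of determinant $\pm 1$, integer translation). Concretely, one must produce a recursively defined sequence of matrices $U_k \in GL_2(\mathbb{Z})$ and translation vectors $v_k \in \mathbb{Z}^2$ whose cumulative action shifts successive simplices into place, and then check that the geometric series of simplex ``widths'' telescopes to fill the target triangle up to but not including the deleted edge. I would organize this as a lemma isolating the single-step map, prove by induction that the partial unions $\bigsqcup_{k=1}^n g_i^{(k)}(T_i^{(k)})$ are disjoint and exhaust an increasing exhaustion of $T_i \setminus e_T$, and conclude by a limiting argument that the full infinite union is the required bijection. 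Establishing the base pair and then promoting it to a family via an explicit parametrization is routine once the single infinite relation is in hand; the convergence and exact-tiling bookkeeping for that one relation is where the proof will concentrate its effort.
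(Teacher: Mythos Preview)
Your high-level outline is in the right spirit, but two concrete points diverge from how the paper actually proceeds, and the first is a genuine gap. You propose generating the family by applying $G$-maps (scaling or shearing) to the base denominator-$5$ pair, but $G$-maps preserve both Ehrhart data and the weighting invariant, so this produces only $G$-equivalent copies of the same example, not an infinite family of essentially distinct pairs. The paper instead varies the \emph{denominator} $d$ and a position parameter $t$: the pairs are $T_l = \mathrm{Conv}\bigl((\tfrac{1+t}{d},\tfrac{1}{d}),(\tfrac{1+t}{d},0),(\tfrac{t}{d},\tfrac{1}{d})\bigr)$ and $T_r = \mathrm{Conv}\bigl((\tfrac{2+t}{d},\tfrac{1}{d}),(\tfrac{2+t}{d},0),(\tfrac{1+t}{d},\tfrac{1}{d})\bigr)$ subject to coprimality conditions (``similar neighbors''). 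Relatedly, your logical flow for property~(1) is backwards for the general family: Ehrhart equivalence of $T_l$ and $T_r$ is not inherited from Section~\ref{sec:weight} machinery but is \emph{deduced from} the infinite equidecomposability of the half-open triangles (via Proposition~\ref{prop:ehr_fn}) together with a separate elementary check that the deleted edges themselves have matching Ehrhart functions.

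For property~(2), your instinct about an infinite decomposition accumulating toward the removed boundary is correct, but the paper's execution is cleaner and more explicit than the recursive/telescoping scheme you anticipate. The decomposition is a fan of lines through a single external lattice point (e.g.\ $J=(\tfrac{2}{5},0)$ for $T_{(1,2)}$) to successive $\mathcal{L}_d$-points on the top edge $y=\tfrac{1}{d}$; the $k$-th region is sent to its counterpart by the single shear $U_k=\begin{pmatrix}1&k\\0&1\end{pmatrix}$, with no recursion and no geometric-series bookkeeping needed. The pieces accumulate at the bottom vertex (which lies on the deleted closed edge), and the disjoint-cover verification is a direct coordinate check rather than an induction-plus-limit argument.
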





This theorem motivates the following conjecture presented in Section \ref{sec:infinite}.

\begin{conjecture}
Ehrhart equivalence is a necessary and sufficient condition for (not necessarily finite or rational) discrete equidecomposability.
\end{conjecture}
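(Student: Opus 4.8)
The plan is to treat the biconditional as two implications that require completely different kinds of argument, since one is essentially bookkeeping and the other carries all the genuine content. For the \emph{necessity} direction — that a (possibly infinite, possibly irrational) discrete equidecomposition forces Ehrhart equivalence — I would argue directly at the level of lattice points. If $\mathcal{F}$ is the equidecomposability relation assembled from open simplices $T_i$ and $G$-maps $g_i(x) = U_i x + v_i$, then $\mathcal{F}$ restricts to a bijection from the lattice points of $P$ to those of $Q$: each lattice point of $P$ lies in exactly one $T_i$, is carried to a lattice point by $g_i$ since $U_i \in GL_2(\mathbb{Z})$ and $v_i \in \mathbb{Z}^2$, and the images are disjoint and exhaust the lattice points of $Q$. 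To promote this to equality of the full quasi-polynomials I would use that $G$-maps are compatible with dilation up to an integer translation: on $tT_i$ the assignment $y \mapsto U_i y + t v_i$ is again a $G$-map sending $tT_i$ bijectively onto $t\,g_i(T_i)$, so the partition of $P$ induces a partition of $tP$ for every $t$, yielding a lattice-point bijection $tP \leftrightarrow tQ$ and hence $\mathrm{ehr}_P = \mathrm{ehr}_Q$. The only point needing care in the infinite case is that the disjoint-union structure survives dilation and that no lattice point of $tP$ is counted twice; because dilation is injective and only finitely many pieces meet any given $tP \cap \mathbb{Z}^2$, this is routine.

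For the \emph{sufficiency} direction — that Ehrhart equivalence alone produces \emph{some} discrete equidecomposition — the strategy is far less clear, and essentially all the difficulty resides here. I would first attempt to reduce to the case of triangles by taking a common refinement and inducting on the number of pieces, while acknowledging that matching the Ehrhart data across a triangulation is itself a nontrivial step rather than a formality. For the base case of Ehrhart-equivalent triangles, the model to imitate is Main Result 2: there the finite rational obstruction detected by the edge-weighting invariant of Section \ref{sec:weight} is circumvented by deleting a single closed edge and then exhibiting an explicit \emph{infinite} decomposition. I would therefore seek a general mechanism that dissolves the edge-weighting obstruction — the one invariant we know separates Ehrhart-equivalent triangles in the finite rational setting — by spending either infinitely many simplices or irrational ones. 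Concretely, the aim is to realize the combinatorial lattice-point matchings that Ehrhart equivalence guarantees at each dilate through a coherent, self-refining (plausibly self-similar) cutting scheme, so that the geometric piecewise-$G$ map and the arithmetic matching coincide in the limit.

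The main obstacle is exactly this last construction. At present there is no machinery converting equality of Ehrhart quasi-polynomials into a geometric piecewise-$G$ bijection, and Ehrhart equivalence only supplies equal \emph{counts} at each scale rather than canonical bijections, so the compatible family of matchings must itself be built by hand before any limiting argument can be attempted. Moreover, the edge-weighting invariant shows that Ehrhart equivalence is strictly weaker than finite rational equidecomposability, so any proof of sufficiency must explain precisely why every obstruction beyond Ehrhart equivalence becomes trivial once infinite or irrational decompositions are allowed. Establishing that the edge-weighting invariant, together with any higher analogue one might formulate, is either forced to vanish or can be absorbed into an infinite decomposition is the crux of the problem, and it is the reason the statement is posed as a conjecture rather than proved as a theorem.
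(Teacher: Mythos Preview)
Your analysis is essentially aligned with the paper's own treatment, but it is important to flag that the statement in question is a \emph{conjecture}: the paper does not prove it and does not claim to. What the paper does do is prove the necessity direction (equidecomposability $\Rightarrow$ Ehrhart equivalence) as Proposition~\ref{prop:ehr_fn}, and then explicitly leave sufficiency open, writing in the remark after Conjecture~\ref{conj:infinite}: ``We proved the forward direction. The key difficulty of the backward direction is that it is hard to keep track of the irrational points in an arbitrary decomposition.''

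Your necessity argument is the same idea as the paper's Proposition~\ref{prop:ehr_fn}. The paper packages the bijection as $\Phi(p) = t\,\mathcal{F}(p/t)$ from $tS \cap \mathbb{Z}^2$ to $tS' \cap \mathbb{Z}^2$, checking injectivity, surjectivity, and that denominators are preserved; you instead dilate each piece and observe that $y \mapsto U_i y + t v_i$ is again a $G$-map on $tT_i$. These are the same construction viewed from two angles, and your remark about only finitely many pieces meeting $tP \cap \mathbb{Z}^2$ is a reasonable extra word of caution that the paper handles implicitly via boundedness.

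For sufficiency, you do not give a proof, and neither does the paper --- you are both in the same position. Your outline (reduce to triangles, imitate the infinite construction of Theorem~\ref{thm:infinite}, hope the edge-weight obstruction dissolves in the infinite or irrational regime) is a plausible program but, as you yourself concede, there is no mechanism known for converting Ehrhart equality into a coherent piecewise-$G$ bijection. So your ``proof proposal'' is really a correct diagnosis of where the open problem lies, and it matches the paper's assessment.
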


\end{subsection}

\begin{subsection}{Outline}

\begin{remark}
For the rest of this paper, we abbreviate the phrase \emph{rational finite discrete equidecomposability} with \emph{equidecomposability}, as all of our results (except for the construction and questions in Section \ref{sec:infinite}) concern this situation. Furthermore, we emphasize that we are only working with polytopes in dimension $2$ (polygons).
\end{remark}

\begin{itemize}
\item Section \ref{sec:minimal} takes advantage of the fact that any triangulation of an integral polygon can be reduced to a triangulation consisting of triangles that intersect lattice points only at their vertices.\footnote{This fact does not generalize to higher dimensions, and is thus one of the main barriers preventing our methods from being extended.} Such triangles are known as \emph{denominator} $1$ \emph{minimal triangles}. Minimal triangles of denominator $d$, which are scaled-down versions of denominator $1$ minimal triangles, serve as the building blocks for all of our constructions. Here we classify them via an action of the dihedral group on $3$ elements and also in terms of a $G$-invariant weighting scheme.

\item In Section \ref{sec:weight}, we show that this weighting scheme is a necessary invariant for equidecomposability. We produce two denominator $5$ minimal triangles $T_{(1,2)}$ and $T_{(1,4)}$ that are Ehrhart equivalent but have different weights. Therefore, they are not equidecomposable.

\item In Section \ref{sec:infinite}, we delete an edge from $T_{(1,2)}$ and $T_{(1,4)}$ and then produce an infinite rational equidecomposability relation between them. This is surprising since, by the main result of Section \ref{sec:weight}, there does not exist a finite rational equidecomposability relation between $T_{(1,2)}$ and $T_{(1,4)}$. By the same construction, we can produce an infinite family of pairs of triangles that share the same Ehrhart quasi-polynomial but are not equidecomposable. We also explore further questions related to infinite equidecomposability, and give definitions for equidecomposability in a broader way.

\item Section \ref{sec:questions} closes with questions for further research.

\end{itemize}

\end{subsection}

\end{section}

\begin{section}{$d$-Minimal Triangles}
\label{sec:minimal}

\begin{subsection}{Definitions and Motivation}

\begin{definition}
Let $\mathcal{L}_d = \frac{1}{d} \mathbb{Z} \times \frac{1}{d} \mathbb{Z}$. A triangle $T$ is said to be $d$\emph{-minimal} if $T \cap \mathcal{L}_d$ consists precisely of the vertices of $T$.
\end{definition}

It is well-known that $d$-minimal triangles have area $\frac{1}{2d^2}$ and that any denominator $P$ polygon can be triangulated by $d$-minimal triangles. See Section 3 of \cite{hermann} for proofs of both of these results. As a result, any triangulation $\mathcal{T}$ of $P$ can be refined into a $d'$-minimal triangulation $\mathcal{T}'$ for some positive integer $d'$. It is also useful to note that $G$-maps send $d$-minimal triangles to $d$-minimal triangles.

Now, in light of Definition \ref{def:equi}, an equidecomposability relation $\mathcal{F}:P \to Q$ can be viewed as bijecting a simplicial decomposition (that is, a triangulation) $\mathcal{T}_1$ of $P$ to a simplicial decomposition (triangulation) $\mathcal{T}_2$ of some polygon $Q$. That is, to each open simplex (face) in $\mathcal{T}_1$, we assign a $G$-map such that the overall map is a bijection. In this case we write $\mathcal{F}: (P, \mathcal{T}_1) \to (Q, \mathcal{T}_2)$.

Define the denominator of a triangulation $\mathcal{T}$ to be the least integer $N$ such that for all faces $F \in \mathcal{T}$, the dilate $NF$ is an integral simplex. Thus, observe that if $P$ is a denominator $d$ polygon with a triangulation $\mathcal{T}_1$ of denominator $d'$, then $d'$ is divisible by $d$ because the vertices of $P$ are included in $\mathcal{T}_1$. If $P$ and $Q$ are denominator $d$ polygons and $\mathcal{F}: (P, \mathcal{T}_1) \to (Q, \mathcal{T}_2)$, then $\mathcal{T}_1$ and $\mathcal{T}_2$ have the same denominator. To see this, observe that the vertices of $\mathcal{T}_1$ are bijected to the vertices of $\mathcal{T}_2$ via $G$-maps, and $G$-maps preserve denominators.

\begin{remark}
\label{rmk:denominator}
If $P$ and $Q$ are denominator $d$ polygons and $\mathcal{F}: (P, \mathcal{T}_1) \to (Q, \mathcal{T}_2)$, then $\mathcal{T}_1$ and $\mathcal{T}_2$ have the same denominator, call it $d'$. In this case, we write $\mathcal{F}_{d'}: (P, \mathcal{T}_1) \to (Q, \mathcal{T}_2)$ and say that $\mathcal{F}$ has denominator $d'$. By the preceding discussion, we see that $d'$ is divisible by $d$.
\end{remark}

Without loss of generality, we can refine $\mathcal{T}_1$ to a minimal triangulation $\mathcal{T}'_1$ (in some denominator) and let $\mathcal{F}$ instead act on $\mathcal{T}'_1$. Pointwise, the definition of $\mathcal{F}$ has not changed, we are simply changing the triangulation upon which we view $\mathcal{F}$ as acting. Therefore, when searching for equidecomposability relations between $P$ and $Q$, it suffices to consider equidecomposability relations defined on minimal triangulations of $P$ in all possible denominators. This is a crucial observation summarized in the following remark.

\begin{remark}
\label{rmk:equi}
Any equidecomposability relation $\mathcal{F}:P \to Q$ can be viewed as fixing a minimal triangulation  $\mathcal{T}_1$ (in some denominator) of $P$ and assigning a $G$-map $g_F$ to each face $F$ (vertex, edge, or facet, respectively) of $\mathcal{T}_1$ such that $g(F)$ is a face (vertex, edge, or facet, respectively) of some minimal triangulation $\mathcal{T}_2$ of $Q$. Hence, when enumerating equidecomposability relations with domain $P$, it suffices to consider those $\mathcal{F}$ that assign $G$-maps to the faces of some minimal triangulation of $P$.

\end{remark}

Therefore, it makes sense to to study the $d$-minimal triangles in general, especially their $G$-orbits. In this section, we will first classify the $G$-orbits of $d$-minimal triangles according to an action of the dihedral group on $3$ elements. Then we will introduce a $G$-invariant weighting system on edges of minimal triangles (known as \emph{minimal edges} that is crucial to the main results in Sections \ref{sec:weight}. Finally, we show that the $G$-orbit of a minimal triangle is classified by the weights of its minimal edges. This last result provides an explicit parametrization of the $G$-equivalence classes of $d$-minimal triangles.

\end{subsection}

\begin{subsection}{Classifying $G$-orbits of Minimal Triangles According to an Action of the Dihedral Group $D_3$}

The following properties of $G$-maps are pivotal for all of the results of this section.

\begin{remark}
\label{rmk:Gmap}
Suppose we have a $G$-map $g: P \to Q$. Since $g$ is an invertible linear map, it is a homeomorphism. Therefore $g: \partial P \to \partial Q$ is also a homeomorphism. Moreover, if $P$ and $Q$ are polygons, linearity and invertibility guarantee that edges are sent to edges and vertices are sent to vertices.

\end{remark}

The next proposition shows that it suffices to consider right triangles occuring in the unit square $[0,1] \times [0,1]$.

\begin{proposition}
\label{prop:standard}
Let $T_1 = \mathrm{Conv} \left((0,0), (\frac{1}{d}, 0), (0, \frac{1}{d}) \right)$. Then any $d$-minimal triangle $T$ is $G$-equivalent to some translation $T_1'$ of $T_1$ with respect to  the lattice $\mathcal{L}_d$. Reducing the vertices of $T_1'$ modulo $\mathbb{Z}^2$, we can further choose $T_1'$ to lie in the unit square.
\end{proposition}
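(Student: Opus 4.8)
The plan is to work at a single vertex of $T$ and exploit the fact that $d$-minimality forces the two edge vectors emanating from that vertex to be, after scaling by $d$, a $\mathbb{Z}$-basis of $\mathbb{Z}^2$. Concretely, fix a vertex $a$ of $T$ with the other two vertices $b, c$, and set $e_1 = b - a$ and $e_2 = c - a$. Since the vertices of a $d$-minimal triangle lie in $\mathcal{L}_d$, we may write $e_i = \frac{1}{d} u_i$ with $u_i \in \mathbb{Z}^2$. The dilated translate $d(T - a) = \mathrm{Conv}(0, u_1, u_2)$ is then an integral triangle whose only lattice points are its vertices — this is exactly the $d$-minimality of $T$ recentered at $a$ and scaled by $d$, using $d\mathcal{L}_d = \mathbb{Z}^2$. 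I would invoke the classical fact that such an empty lattice triangle has area $\tfrac{1}{2}$, whence $\det(u_1, u_2) = \pm 1$ and $\{u_1, u_2\}$ is a basis of $\mathbb{Z}^2$.

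Given this, the linear part of the desired $G$-map is essentially forced. Letting $U \in GL_2(\mathbb{Z})$ be the inverse of the matrix with columns $u_1, u_2$, we obtain $U e_1 = \frac{1}{d}(1,0)$ and $U e_2 = \frac{1}{d}(0,1)$. Applying the $G$-map $g(x) = Ux$ (with zero translation part) sends $a \mapsto Ua$, $b \mapsto Ua + \frac{1}{d}(1,0)$, and $c \mapsto Ua + \frac{1}{d}(0,1)$, so that $g(T) = Ua + T_1$, a translate of $T_1$. Because $a \in \mathcal{L}_d$ and $U$ has integer entries, the shift $Ua$ again lies in $\mathcal{L}_d$, which establishes the first assertion.

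For the second assertion I would post-compose with a lattice translation to push the result into the unit square. Write $Ua = n + t_0$, where $n \in \mathbb{Z}^2$ is obtained by taking floors componentwise, so that each coordinate of $t_0$ lies in $\{0, \tfrac{1}{d}, \ldots, \tfrac{d-1}{d}\}$. Translating by $-n$ (again an element of $G$) replaces $Ua + T_1$ by $t_0 + T_1$; since $T_1$ has vertices $(0,0), (\tfrac{1}{d},0), (0,\tfrac{1}{d})$ and each coordinate of $t_0$ is at most $\tfrac{d-1}{d}$, every vertex of $t_0 + T_1$ has coordinates in $[0,1]$, so $t_0 + T_1 \subseteq [0,1] \times [0,1]$. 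As $G$ is a group, the composition of $g$ with this corrective translation is a single $G$-map, producing the claimed $T_1'$.

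The only genuine input beyond routine bookkeeping is the empty-lattice-triangle fact — that a lattice triangle containing no lattice points other than its three vertices has its edge vectors forming a basis of $\mathbb{Z}^2$ — which follows from the area computation already cited from \cite{hermann} together with Pick's theorem. Thus the main obstacle is not conceptual but one of care: tracking the two translations (the automatic shift $Ua$ and the corrective shift $-n$) so that the final transformation is verifiably an element of $G = GL_2(\mathbb{Z}) \rtimes \mathbb{Z}^2$ and so that the intermediate translate is genuinely by a vector of $\mathcal{L}_d$.
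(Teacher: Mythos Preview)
Your argument is correct and follows essentially the same route as the paper: translate a vertex of $T$ to the origin, scale by $d$ to obtain an empty (unimodular) lattice triangle, apply a change-of-basis $U \in GL_2(\mathbb{Z})$ to send it to $dT_1$, and then reduce the resulting $\mathcal{L}_d$-shift modulo $\mathbb{Z}^2$. Your write-up is in fact more explicit than the paper's in tracking the two translations and verifying that $t_0 + T_1$ lands in $[0,1]^2$, but the underlying idea is identical.
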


\begin{proof}

Let $v$ be a vertex of $T$. Then $T-v$ is a triangle lying at the origin. Since $T$ is $d$-minimal, $dT - dv$ is a unimodular triangle. Applying a change of basis, there exists $U \in GL_2(\mathbb{Z})$ such that $U(dT - dv) = dT_1$. By linearity, $ U(dT - dv) = d U(T- v)$, which implies $U(T - v) = T_1$. Therefore, $U(T - v) + Uv = UT$ is a translation of $T_1$ with respect to the lattice $\mathcal{L}_d$. Reducing modulo $\mathbb{Z}^2$ (translating by integer vectors), we see that for some $w \in \mathbb{Z}^2$, $T_1' := UT + w$ lies in the unit square. The requirements of the proposition are satisfied.

\end{proof}

Thus, the question of classifying $G$ orbits reduces to understanding intersections of G orbits with the set of translations of $T_1$ in the unit square. The next proposition classifies the types of transformations that send a triangle of the form $T_1 + v$ to some $T_1 + w$.

\begin{figure}[H]
    \centering
    \includegraphics[keepaspectratio=true, width=6cm]{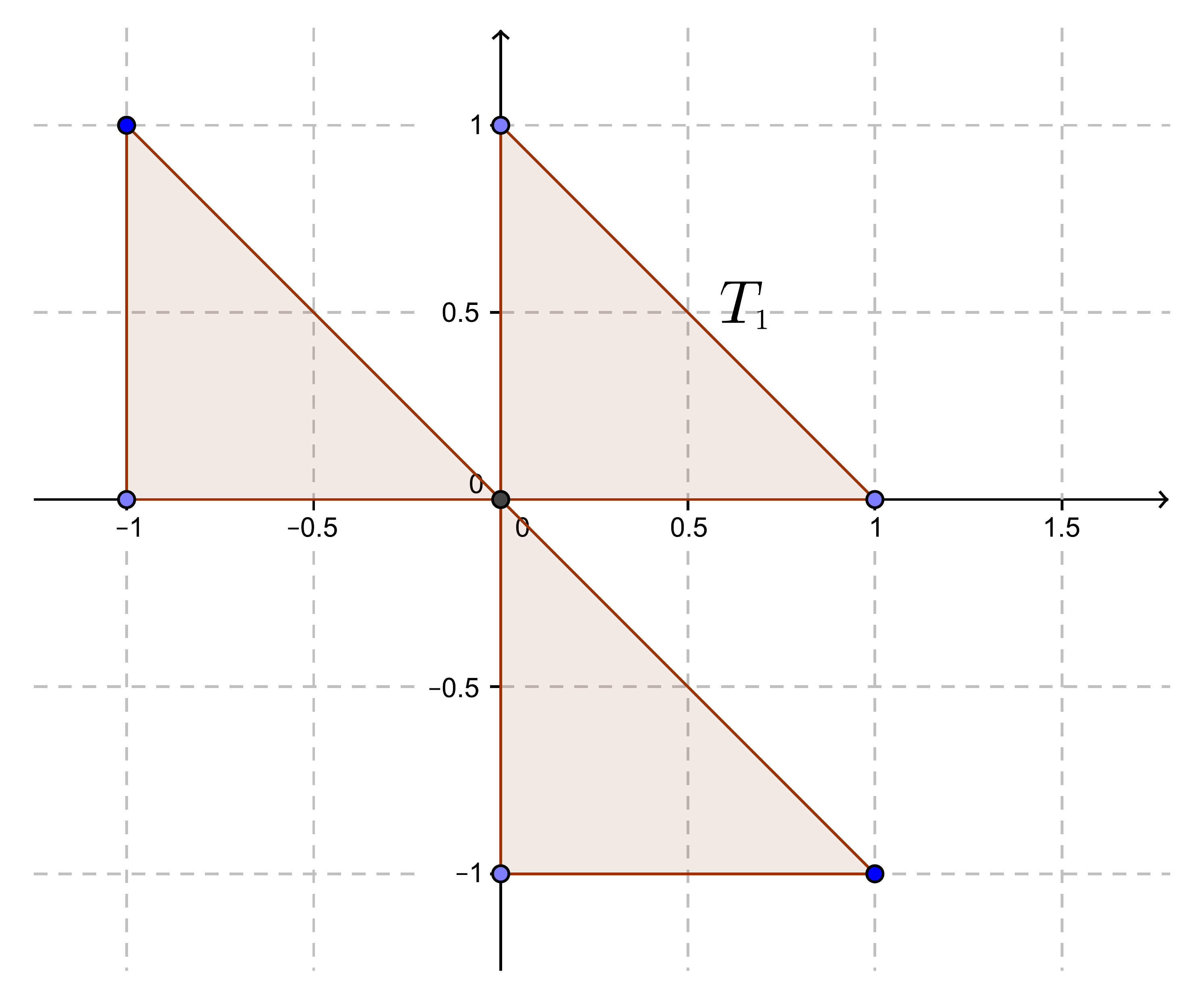}
    \caption{\small The possible images of $T_1$ under $U$. }
    \label{fig:U_classification}
\end{figure}

\begin{proposition}
\label{prop:dihedral}

Suppose $U(T_1 + v) + u = T_1 + w$ where $U \in GL_2(\mathbb{Z})$ and $u \in \mathbb{Z}^2$. Then $U$ belongs to the following set $D$ of matrices.

\begin{equation*}
\begin{split}
\left\{\begin{pmatrix} 1 & 0 \\ 0 & 1 \end{pmatrix}, \begin{pmatrix} 0 & 1 \\ 1 & 0 \end{pmatrix}, \begin{pmatrix} 0 & 1 \\ -1 & -1 \end{pmatrix}, \begin{pmatrix} -1 & -1 \\ 1 & 0 \end{pmatrix}, \begin{pmatrix} 1 & 0 \\ -1 & -1 \end{pmatrix}, \begin{pmatrix} -1 & -1 \\ 0 & 1 \end{pmatrix} \right\}
\end{split}
\end{equation*}
\end{proposition}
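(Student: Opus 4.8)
The plan is to exploit the linearity of $U$ to collapse an a priori infinite set of possibilities into a short finite enumeration. First I would rewrite the hypothesis: since $U$ is linear, $U(T_1 + v) + u = UT_1 + (Uv + u)$, so the equation $U(T_1 + v) + u = T_1 + w$ is equivalent to $UT_1 = T_1 + s$, where $s := w - Uv - u$. In other words, $U$ sends $T_1$ to a translate of itself. The crucial observation is that $U$, being linear, fixes the origin, which is one of the three vertices of $T_1$. By Remark \ref{rmk:Gmap}, the map induced by $U$ carries vertices of $T_1$ to vertices of $UT_1 = T_1 + s$; in particular the origin must be a vertex of $T_1 + s$.

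Writing $e_1, e_2$ for the standard basis, the three vertices of $T_1 + s$ are $s$, $s + \frac{1}{d}e_1$, and $s + \frac{1}{d}e_2$, so exactly one of these equals the origin. This yields precisely three cases for the translation vector: $s = 0$, $s = -\frac{1}{d}e_1$, or $s = -\frac{1}{d}e_2$. In each case I would pin down $U$ by matching the remaining two vertices. The non-origin vertices of $UT_1$ are $\frac{1}{d}Ue_1$ and $\frac{1}{d}Ue_2$, and they must coincide, as a set, with the two non-origin vertices of $T_1 + s$. Since there are two ways to pair them, each case contributes two candidate matrices, read off by taking $Ue_1$ and $Ue_2$ as the columns of $U$.

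Carrying this out explicitly: when $s = 0$ the map permutes $\{(\frac{1}{d},0),(0,\frac{1}{d})\}$, giving the identity and the swap $\bigl(\begin{smallmatrix} 0 & 1 \\ 1 & 0 \end{smallmatrix}\bigr)$; the case $s = -\frac{1}{d}e_1$ forces $\{Ue_1, Ue_2\} = \{(-1,0), (-1,1)\}$ and produces $\bigl(\begin{smallmatrix} -1 & -1 \\ 0 & 1 \end{smallmatrix}\bigr)$ and $\bigl(\begin{smallmatrix} -1 & -1 \\ 1 & 0 \end{smallmatrix}\bigr)$; and the case $s = -\frac{1}{d}e_2$ forces $\{Ue_1, Ue_2\} = \{(0,-1),(1,-1)\}$ and produces $\bigl(\begin{smallmatrix} 0 & 1 \\ -1 & -1 \end{smallmatrix}\bigr)$ and $\bigl(\begin{smallmatrix} 1 & 0 \\ -1 & -1 \end{smallmatrix}\bigr)$. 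Assembling all $3 \times 2 = 6$ solutions recovers exactly the set $D$. The last point to verify is that each candidate genuinely lies in $GL_2(\mathbb{Z})$: every matrix has integer entries by construction, and its determinant is $\pm 1$ because $U$ preserves the area $\frac{1}{2d^2}$ of $T_1$ (equivalently, one checks the six determinants directly).

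I do not expect a serious obstacle here; the argument is essentially a bookkeeping enumeration. The one step carrying any real content is the reduction via linearity to the statement that $UT_1$ is a translate of $T_1$ that fixes the origin, since this is what converts the unbounded problem (infinitely many $v, w, u$) into a constraint on $s$ with only three solutions. Everything after that is routine case analysis.
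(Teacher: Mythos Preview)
Your proposal is correct and follows essentially the same approach as the paper's own proof: both rewrite the hypothesis so that $UT_1$ is a translate of $T_1$ containing the origin as a vertex, obtain three possible translates, and then enumerate the two vertex-pairings in each case to arrive at the six matrices in $D$. Your write-up is in fact slightly more explicit (working with $s$ and with the $\tfrac{1}{d}$ scaling throughout, and noting the $\det = \pm 1$ check), but there is no genuine difference in strategy.
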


Note that $D$ defines a group isomorphic to the dihedral group $D_3 = \langle A, B | A^3 = B^2 = ABAB = I \rangle$. Simply set $A = \begin{pmatrix} -1 & -1 \\ 1 & 0 \end{pmatrix}$ and $B = \begin{pmatrix} 0 & 1 \\ 1 & 0 \end{pmatrix}$. This is the crucial observation required for proof of the main theorem later in this section.

\begin{proof}

If $U(T_1 + v) + u = T_1 + w$, then $UT_1 + Uv + u = T_1 + w$. Thus $UT_1 + Uv + u - w = T_1$. Note that invertible linear transformations preserve the vertices of a triangle by Remark \ref{rmk:Gmap}. Precisely, the vertices of the original triangle map to the vertices of its image. Hence, $Uv + u - w$ is a vertex of $T_1$. This implies that $UT_1$ is either $T_1$, $T_1 - (1,0)^{\mathrm{T}}$, or $T_1 - (0,1)^{\mathrm{T}}$. These triangles are shown in Figure \ref{fig:U_classification}.

Since $U$ is linear and sends vertices to vertices, we just need to compute the number of ways to send the ordered basis (and vertices of $T_1$) $\{(0,1), (1,0) \}$ to other non-zero vertices of the previous triangles listed. This amounts to computing six change of basis matrices, which are precisely given by the matrices in the set $D$.

\end{proof}

\vspace{-1.2 cm}

\begin{figure}[H]
    \centering
    \captionsetup{font=small,skip=0pt}
    \includegraphics[width=10cm, height = 9 cm]{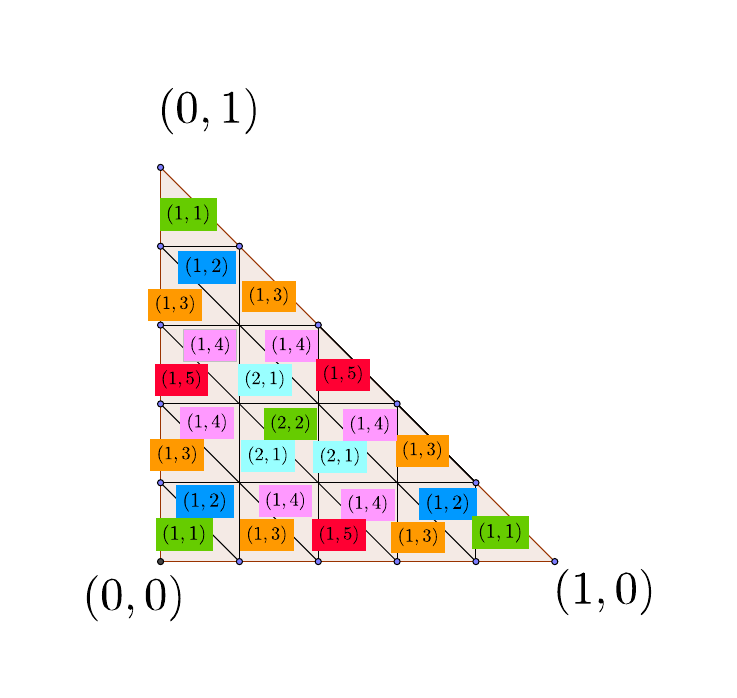}
    \caption{The triangulation $\mathcal{T}$ on $P$ in the case $d = 5$.}
    \label{fig:standard}
\end{figure}

The next proposition will accomplish our classification of $G$-orbits, as mentioned before, via a relationship with the dihedral group. Consider the half of the unit square given by $P =\mathrm{Conv}((0,0), (1,0), (0,1))$ and the $d$-minimal triangulation $\mathcal{T}$ of $P$ given by cutting grid-squares in half by lines of slope $-1$. An example is shown in Figure \ref{fig:standard}.

This triangulation involves translations of $T_1$ and translations of reflections of $T_1$ about the line $y = -x$. These reflected triangles are, however, equivalent under the lattice-preserving transformation given by the flip about the line $y = -x$ to translations of $T_1$ lying in the unit square. Hence, it sufficies to classify the $G$-orbits of the triangles lying in $P$. To simplify things, we can consider instead the quotient $\overline{P} := P + \mathbb{Z}^2 \subset \mathcal{L}_d / \mathbb{Z}^2$.

It is straightforward to check that we get an action of $D_3 \cong D$ on $\overline{P}$ and its triangulation by checking that each matrix in $D$ preserves the triangulation. That is, this action defines a permutation on $\mathcal{T}$, which is the key point. Let $\Phi$ be the bijection from $\overline{P}$ (and its constituent triangles) to a triangulated equilateral triangle $T$ as shown below for the case $n = 5$.

Now, we also have an action of $D$ on $T$ by letting $A$ act as a counterclockwise $60^{\circ}$ rotation and $B$ as a reflection about the angle bisector of the leftmost vertex, where $A$ and $B$ are defined in Proposition \ref{prop:dihedral}. We regard this action as a permutation on the set of triangles in the given triangulation of $T$. We claim that these two actions are compatible. This gives us an explicit understanding of the distribution of minimal triangles in $\overline{P}$.

\begin{proposition}
\label{prop:compatibility}
The actions of $D$ on the constituent triangles of $\overline{P}$ and on the constituent triangles of $T$ are compatible in the sense that given $\alpha \in D$,  $\alpha \Phi = \Phi \alpha$.
\end{proposition}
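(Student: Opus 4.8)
The goal is to show that the bijection $\Phi \colon \overline{P} \to T$ intertwines the two actions of $D$, i.e.\ $\alpha \Phi = \Phi \alpha$ for every $\alpha \in D$. My plan is to reduce this to checking the statement on the two generators $A$ and $B$ of $D_3$, and then to verify each generator by tracking where it sends a single well-chosen "anchor" triangle together with the orientation of its neighbors.

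The first step is a general principle: since $D \cong D_3 = \langle A, B \mid A^3 = B^2 = (AB)^2 = I\rangle$ is generated by $A$ and $B$, and since the relation $\alpha\Phi = \Phi\alpha$ is preserved under composition (if $\alpha\Phi = \Phi\alpha$ and $\beta\Phi = \Phi\beta$, then $(\alpha\beta)\Phi = \alpha(\Phi\beta) = (\alpha\Phi)\beta = \Phi(\alpha\beta)$), it suffices to establish the identity for the generators $A$ and $B$ alone. For this to be a valid reduction I must first confirm that both $D$-actions are genuine group actions on the respective sets of triangles. On the side of $\overline{P}$ this is the claim, asserted just before the proposition, that each matrix in $D$ permutes the triangulation $\mathcal{T}$; I would verify this by noting that each $\alpha \in D$ fixes the triangulated region $\overline{P}$ setwise (each matrix in $D$ was constructed in Proposition \ref{prop:dihedral} precisely to send translates of $T_1$ to translates of $T_1$ modulo $\mathbb{Z}^2$, hence preserves $\mathcal{L}_d/\mathbb{Z}^2$ and the half-square $\overline{P}$). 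On the side of $T$, the rotation and reflection of an equilateral triangle manifestly permute its constituent subtriangles.

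The second and central step is the generator-by-generator check. Because $\Phi$ is a bijection of finite sets of triangles and both actions are determined by a permutation, it is enough to verify $A\Phi = \Phi A$ and $B\Phi = \Phi B$ as permutations. The cleanest way is to fix a base triangle $T_0$ in $\overline{P}$ (say the copy of $T_1$ at the origin corner) and its image $\Phi(T_0)$ in $T$, and to check that $A$ and $B$ act the same way on the \emph{combinatorial position} of triangles: each action permutes the three "corner regions" of the triangulated equilateral triangle in the same cyclic ($A$) or reflective ($B$) pattern, and within that it respects the "upward/downward" orientation of each minimal triangle. Concretely, I would exhibit $\Phi$ explicitly as the map taking the $i$-th diagonal row of $\mathcal{T}$ to the $i$-th horizontal row of subtriangles in $T$ (as suggested by Figure \ref{fig:standard}), and then compute the image of a few representative triangles under both $\alpha\Phi$ and $\Phi\alpha$ for $\alpha \in \{A,B\}$, confirming agreement.

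The main obstacle I anticipate is purely bookkeeping rather than conceptual: making the bijection $\Phi$ precise enough that the equality of two permutations can be checked unambiguously. The actions are visually transparent from the figure, but to write a rigorous proof one must pin down $\Phi$ on the level of individual triangles (including the distinction between the "$T_1$-type" and "reflected" triangles in each grid square) and then confirm that the matrix $A$ really induces a $60^\circ$ rotation and $B$ a reflection about the leftmost-vertex bisector under this identification. Since $D$ acts simply transitively on a suitable set of flags (a triangle together with a marked vertex and edge) and the equilateral triangle's barycentric symmetry does likewise, the equivariance follows once $\Phi$ is shown to carry flags to flags compatibly; establishing that flag-compatibility for the generators is the crux, and everything else reduces to finite verification.
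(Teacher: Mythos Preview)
Your reduction to the generators $A$ and $B$ is correct and matches the paper. However, the paper's proof is considerably shorter than what you propose, because it exploits an observation you have not made: for \emph{both} actions, the permutation of the constituent triangles is completely determined by the permutation of the three \emph{outer vertices} of the big triangle ($(0,0),(1,0),(0,1)$ in $\overline{P}$, and their images $v_1,v_2,v_3$ in $T$). This holds because each element of $D$ acts affinely on $\overline{P}$ (respectively, as a rigid motion on $T$), so once the three corners are placed, every interior subtriangle is forced. Consequently the paper needs only to verify $\Phi A = A\Phi$ and $\Phi B = B\Phi$ on the three corner vertices, which is a handful of one-line computations in $\mathcal{L}_d/\mathbb{Z}^2$.

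Your plan to pin down $\Phi$ row-by-row, track representative constituent triangles, distinguish upward from downward triangles, and invoke flag-transitivity would all work, but it is exactly the ``bookkeeping'' you anticipate and it is unnecessary. The paper's vertex-determination insight collapses all of that into three evaluations per generator. So your proposal is not wrong, but you are missing the one structural observation that makes the proof immediate rather than tedious.
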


\begin{proof}

For both actions, the permutation on the set of constituent triangles is determined by the permutation of the vertices of the outer triangle (either $\overline{P}$ or $T$). Hence, it suffices to check that the actions on the vertices of each triangle are compatible. Let $v_1 = \Phi((0,0))$, $v_2 = \Phi((0,1))$, and $v_3 = \Phi((1,0))$ be the vertices of $T$. It is straightforward to check, using the definition of $\overline{P}$ as a quotient space, that $\Phi A(0,0) = \Phi (1,0)  = A \Phi (0,0)$, $\Phi A(1,0) = \Phi (0,1)  = A \Phi(1,0)$ and $\Phi A(0,1) = \Phi (0,0)  = A \Phi(0,1)$. The same remarks hold for the action of $B$. Since this is a group action, it suffices to check compatibility at the generators, so we conclude the proposition statement.

\end{proof}

\begin{figure}[H]
    \centering
    \includegraphics[keepaspectratio=true, width=15cm]{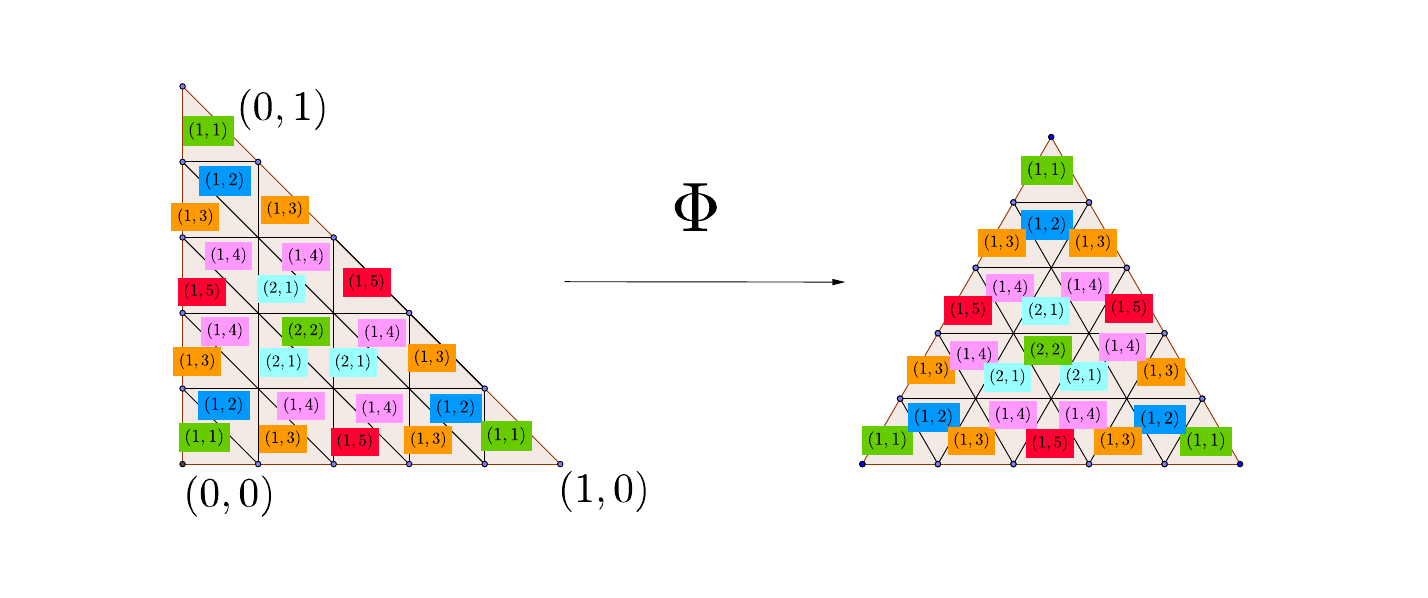}
    \caption{The map $\Phi$ in the case $d = 5$.}
    \label{fig:Phi_map}
\end{figure}

\end{subsection}

\begin{subsection}{A $G$-invariant Minimal Edge Weighting System}

Minimal segments are the $1$-dimensional counterparts of minimal triangles.

\begin{definition}[minimal edge]
\label{def:minimal_edge}

A line segment $E$ with endpoints in $\mathcal{L}_d$ is said to be a \emph{$d$-minimal segment} if $E \cap \mathcal{L}_d$ consists precisely of the endpoints of $E$.

\end{definition}

In particular, observe that the edges of $d$-minimal triangles are $d$-minimal segments. Our goal in the next two subsections is to develop a $G$-invariant weighting system on minimal edges that we will extend to a weighting system on minimal triangles. The existence of this invariant is the key to all of our main results.

Note that this weight is defined on \emph{oriented} minimal edges: minimal edges with an ordering/direction on its endpoints. For example, if $E$ has vertices $p$ and $q$, we can assign $E$ the orientation $p \to q$, in which case we can represent the oriented edge as $E_{p \to q}$.

\begin{definition}[weight of an edge]
\label{def:weight}

Let $E_{p \to q}$ be an oriented $d$-minimal edge from endpoint $p = (\frac{w}{d}, \frac{x}{d})$ to $q = (\frac{y}{d}, \frac{z}{d})$. Then define the weight $W(E_{p \to q})$ of $E_{p \to q}$ to be

\begin{equation*}
W(E_{p \to q}) = \det \left[ \begin{pmatrix} d & 0 \\ 0 & d \end{pmatrix} \begin{pmatrix} w/d & y/d \\ x/d & z/d \end{pmatrix} \right] = \det \begin{pmatrix} w & y \\ x & z \end{pmatrix} \mod d.
\end{equation*}

\end{definition}

As mentioned, $W$ is invariant (up to sign) under the action of $G$.

\begin{proposition}[$W$ is $G$-invariant]
\label{prop:weight_inv}

Let $E_{p \to q}$ be an oriented minimal edge and let $g \in G$. Then $W(E_{p \to q}) = \pm W(g(E)_{g(p) \to g(q)})$. Precisely, if $g$ is orientation preserving (i.e. $\det g=1$), $W(E_{p \to q}) = W(g(E)_{g(p) \to g(q)})$, and if $g$ is orientation reversing (i.e. $\det g=-1$), $W(E_{p \to q}) = -W(g(E)_{g(p) \to g(q)})$.

\end{proposition}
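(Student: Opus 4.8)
The plan is to reduce the statement to a single determinant computation governed by the multilinearity of $\det$, with the key point being that the translation part of $g$ contributes nothing modulo $d$. First I would set up notation by clearing denominators: write the integer vectors $P := dp = (w,x)^{\mathrm{T}}$ and $Q := dq = (y,z)^{\mathrm{T}}$, so that by Definition \ref{def:weight} the weight is simply $W(E_{p \to q}) \equiv \det[\,P \mid Q\,] \pmod{d}$, where $[\,P \mid Q\,]$ denotes the matrix with columns $P$ and $Q$. I would also record, using that $G$ preserves $\mathcal{L}_d$, that the image $g(E)$ is again a $d$-minimal segment, so that $W(g(E)_{g(p) \to g(q)})$ is well-defined in the first place.

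Next I would compute the effect of applying $g: x \mapsto Ux + v$ with $U \in GL_2(\mathbb{Z})$ and $v \in \mathbb{Z}^2$. Since $g(p) = Up + v$ and $g(q) = Uq + v$, clearing denominators gives the integer vectors $d\,g(p) = UP + dv$ and $d\,g(q) = UQ + dv$. Hence
\begin{equation*}
W(g(E)_{g(p) \to g(q)}) \equiv \det[\,UP + dv \mid UQ + dv\,] \pmod{d}.
\end{equation*}

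The heart of the argument is to expand this determinant by multilinearity in its two columns:
\begin{equation*}
\det[\,UP + dv \mid UQ + dv\,] = \det[\,UP \mid UQ\,] + d\,\det[\,UP \mid v\,] + d\,\det[\,v \mid UQ\,] + d^2\,\det[\,v \mid v\,].
\end{equation*}
The last term vanishes identically, and the two middle terms are divisible by $d$, so they disappear modulo $d$; this is the only place the hypothesis $v \in \mathbb{Z}^2$ is used, and it is precisely why translation by an integer vector cannot affect the weight. What remains is $\det[\,UP \mid UQ\,] = \det(U)\,\det[\,P \mid Q\,]$, so modulo $d$ we obtain $W(g(E)_{g(p) \to g(q)}) \equiv \det(U)\, W(E_{p \to q})$.

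Finally, since $U \in GL_2(\mathbb{Z})$ forces $\det(U) = \pm 1$, and the orientation of $g$ is governed by $\det g = \det U$, the two cases of the proposition follow at once: the weight is preserved when $g$ is orientation preserving and negated when $g$ is orientation reversing. There is no serious obstacle beyond bookkeeping here; the one point requiring genuine care is the reduction modulo $d$ of the cross terms, which must be justified by the integrality of $v$ (equivalently, by the factor of $d$ introduced when clearing denominators) rather than by any property of the edge $E$ itself.
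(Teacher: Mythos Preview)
Your proof is correct and follows essentially the same approach as the paper: both arguments clear denominators, compute $d\,g(p)$ and $d\,g(q)$, and observe that the translation contribution $dv$ vanishes modulo $d$, leaving $\det(U)$ times the original weight. Your version is slightly more explicit in spelling out the multilinear expansion of the determinant where the paper simply writes the chain of equalities, and you also note that $g(E)$ is again $d$-minimal so the weight is well-defined, which the paper leaves implicit.
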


\begin{proof}
Let $g = U \rtimes v \in G$. Write

\begin{align*}
U = \begin{pmatrix} u_{11} & u_{12} \\ u_{21} & u_{22} \end{pmatrix}; \, v = \begin{pmatrix} v_1 \\ v_2 \end{pmatrix} \, \\
p = \begin{pmatrix} w/d \\ x/d \end{pmatrix}; \,
q = \begin{pmatrix} y/d \\ z/d \end{pmatrix}.
\end{align*}

Then $g(E) = \mathrm{Conv} \left(g(p), g(q) \right)$. That is, $g(p)$ and $g(q)$ are the endpoints of $g(E)$. Also, observe that $\det U = \pm 1$. Therefore, by Definition \ref{def:weight},

\begin{align*}
W(g(E)_{g(p) \to g(q)}) = \\
\det \begin{pmatrix} d & 0 \\ 0 & d \end{pmatrix}
\left[ \begin{pmatrix} u_{11} & u_{12} \\ u_{21} & u_{22} \end{pmatrix}\begin{pmatrix} w/d & x/d \\ y/d & z/d \end{pmatrix} + \begin{pmatrix} v_1 & v_1 \\ v_2 & v_2 \end{pmatrix} \right] \mod d = \\
\det \begin{pmatrix} u_{11} & u_{12} \\ u_{21} & u_{22} \end{pmatrix}
\begin{pmatrix} w & x \\ y & z \end{pmatrix} \mod d = \\
\det \begin{pmatrix} u_{11} & u_{12} \\ u_{21} & u_{22} \end{pmatrix}
\det \begin{pmatrix} w & x \\ y & z \end{pmatrix} \mod d = \\
\pm \det \begin{pmatrix} w & x \\ y & z \end{pmatrix} \mod d = \\
\pm W(E_{p \to q}).
\end{align*}

By analyzing the previous calculation (in particular lines 4 and 5), we also recover the last statement of the proposition.

\end{proof}

We state some nice geometric interpretations of the weight which will be critical to the proofs in Section \ref{subsec:classify}. Although our use of the determinant in this particular setting of discrete equidecomposability is new, both of the following geometric properties previously known from properties of determinants and the theory of lattices.




\begin{proposition}[area interpretation of the weight $W$]
\label{prop:area}
Let $E$ be a non-oriented $d$-minimal segment with endpoints $p = (w/d, x/d)$ and $q = (y/d, z/d)$. Construct the (perhaps degenerate) triangle $T$ with vertices at the origin, $p$, and $q$. Give $T$ the counterclockwise orientation (if $T$ degenerates to a segment, orient that segment in either direction). Suppose WLOG that this induces the orientation $p \to q$ on $E$. Let $Area(T)$ denote the relative area of $T$ in the lattice $\mathcal{L}_d$ where the fundamental parallelogram in $\mathcal{L}_d$ (here a square) is given area $1$. Then

\begin{equation*}
W(E_{p \to q}) = 2 \mathrm{Area}(T) \mod d.
\end{equation*}

\end{proposition}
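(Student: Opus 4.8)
The plan is to connect the algebraic definition of the weight (a determinant mod $d$) to the classical formula for the relative area of a triangle in a scaled lattice. The key observation is that the ordinary Euclidean area of the triangle $T$ with vertices at the origin, $p = (w/d, x/d)$, and $q = (y/d, z/d)$ is given by the familiar formula
\begin{equation*}
\mathrm{Area}_{\mathrm{Euc}}(T) = \frac{1}{2} \left| \det \begin{pmatrix} w/d & y/d \\ x/d & z/d \end{pmatrix} \right| = \frac{1}{2d^2} \left| \det \begin{pmatrix} w & y \\ x & z \end{pmatrix} \right|.
\end{equation*}
First I would recall that the relative area in the lattice $\mathcal{L}_d$ normalizes by the area of a fundamental domain, which is a square of side $1/d$ and hence Euclidean area $1/d^2$. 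Dividing the Euclidean area by this normalizing factor immediately yields $\mathrm{Area}(T) = \tfrac{1}{2} \left| \det \begin{pmatrix} w & y \\ x & z \end{pmatrix} \right|$, so that $2\,\mathrm{Area}(T) = \left| \det \begin{pmatrix} w & y \\ x & z \end{pmatrix} \right|$.

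The second step is to handle the orientation carefully so that the absolute value can be dropped. By hypothesis we give $T$ the counterclockwise orientation, which by convention corresponds to a \emph{nonnegative} signed determinant; that is, the signed area $\tfrac{1}{2}\det\begin{pmatrix} w & y \\ x & z \end{pmatrix}$ is nonnegative precisely when the orientation $p \to q$ is the one induced by traversing the boundary counterclockwise. Here I must reconcile the matrix appearing in Definition~\ref{def:weight}, whose columns are $p$ and $q$, with the standard signed-area determinant; since the columns of $\begin{pmatrix} w & y \\ x & z \end{pmatrix}$ are exactly $dp$ and $dq$, this determinant is $2d^2$ times the signed Euclidean area of $T$, and the counterclockwise convention fixes its sign to match $2\,\mathrm{Area}(T)$ as a nonnegative quantity. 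Reducing modulo $d$ then gives $W(E_{p \to q}) = 2\,\mathrm{Area}(T) \bmod d$, as claimed.

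The main obstacle I anticipate is purely bookkeeping rather than conceptual: making the orientation argument airtight. The subtlety is that the determinant in Definition~\ref{def:weight} is computed before reduction mod $d$, while the statement equates it to $2\,\mathrm{Area}(T) \bmod d$, so I would want to verify that the sign convention baked into ``counterclockwise orientation induces $p \to q$'' genuinely matches the sign of the determinant of the matrix with columns $dp, dq$. There is also the degenerate case, where $T$ collapses to a segment (the origin, $p$, $q$ collinear): there the determinant vanishes, $\mathrm{Area}(T) = 0$, and both sides reduce to $0 \bmod d$, so the statement holds trivially and the ``orient in either direction'' clause in the hypothesis is consistent with this. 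Once the sign is pinned down, the remainder is a one-line computation, so I would keep the proof short and emphasize the normalization of relative versus Euclidean area as the substantive point.
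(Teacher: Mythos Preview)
Your proposal is correct and is exactly the approach the paper has in mind: the authors omit the proof entirely, remarking only that ``it simply relies on calculating area of a triangle via the cross-product formula,'' which is precisely the determinant-and-normalization computation you outline. Your attention to the orientation bookkeeping and the degenerate case is more than the paper itself supplies.
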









We omit the proof of this fact; it simply relies on calculating area of a triangle via the cross-product formula. There are a few more notions we need to define before stating the second geometric property.

\begin{definition}
\label{def:counterclock}
Let $E$ be an oriented $d$-minimal edge with endpoints $p$ and $q$. If $p$ and $q$ do not lie on a line through the origin, the edge $E$ is said to be \emph{oriented counterclockwise (clockwise, respectively)} if it has the induced orientation by the counterclockwise (clockwise, respectively) oriented triangle $T$ with vertices at the origin, $p$, and $q$.
If $p$ and $q$ lie on a line that passes through the origin, then by convention, $E$ is said to be both \emph{oriented counterclockwise and clockwise}.
\end{definition}

\begin{figure}[H]
    \centering
    \includegraphics[keepaspectratio=true, width=11cm]{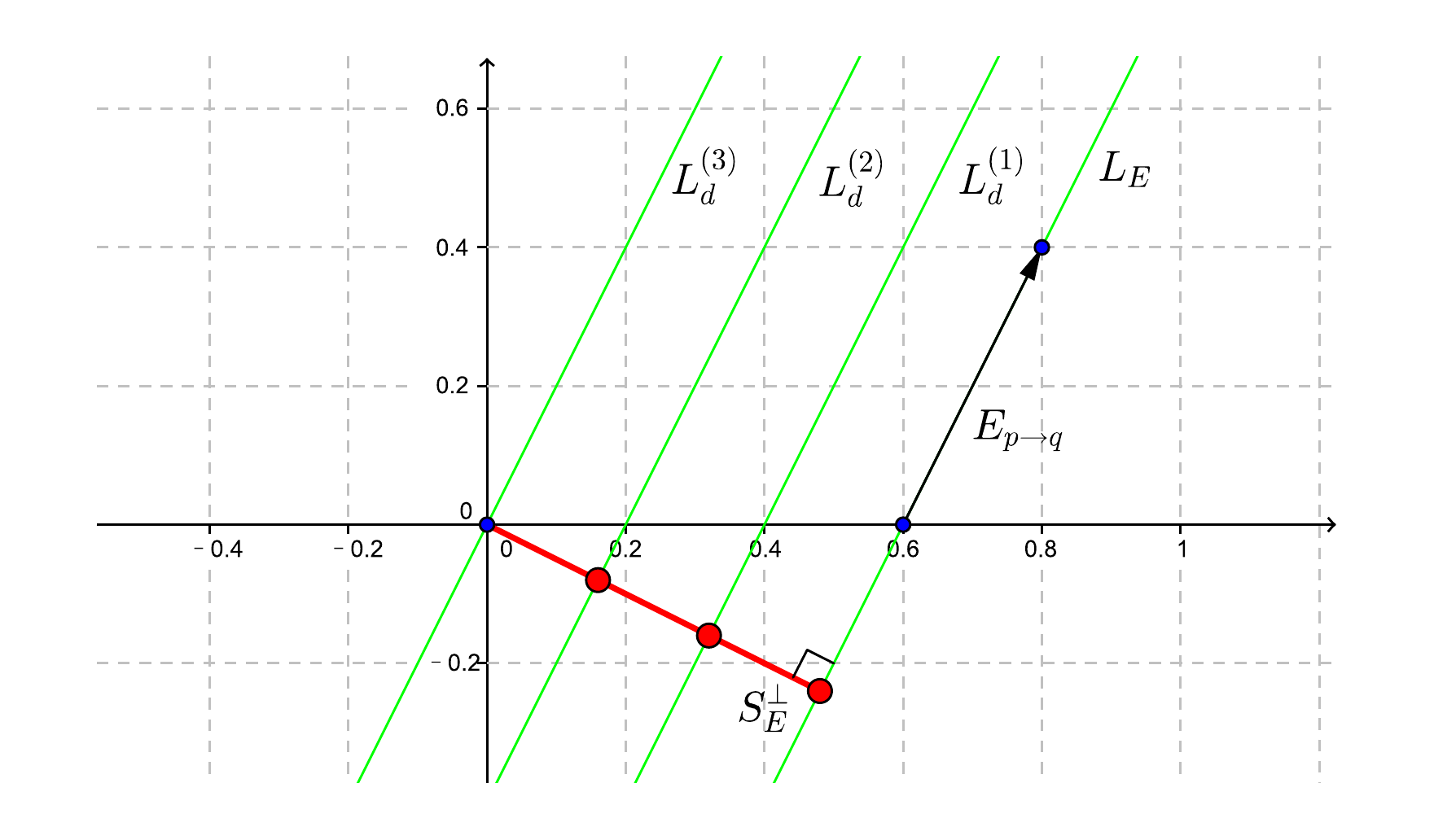}
    \caption{\small $\mathrm{dis}(E_{p \to q}) = 4-1=3$ in this case, where E is the edge with end points $p$, $(0.6,0)$ and $q$, (0.8,0.4) (both lie in $\mathcal{L}_5$). The $L_E^{(i)}$ indicate the lines between $L_E$ and the origin.}
    \label{fig:lattice_dist}
\end{figure}

Now we define $\mathrm{dis}(E)$, the \emph{lattice-distance} of a $d$-minimal segment $E$ from the origin. A line in $\mathbb{R}^2$ is said to be an $\mathcal{L}_d$-\emph{line} if its intersection with $\mathcal{L}_d$ is non-empty. Let $L_E$ be the line extending the segment $E$ and call $L_E^{\parallel}$ the set of all $\mathcal{L}_d$-lines parallel to $L_E$. Construct $S_E^{\perp}$, the (closed) line segment perpendicular to $L_E$ from the origin to $L_E$. Finally, we may define $\mathrm{dis}(E)$ formally as follows.

\begin{equation}
\mathrm{dis}(E) := \left| \{ L_E^{\parallel} \cap S_E^{\perp} \}\right| - 1
\label{eqn:lattice_distance}
\end{equation}

In words, $\mathrm{dis}(E)$ is the number of $\mathcal{L}_d$ lines parallel to $E$  between the origin and the line $L_E$ containing $E$, inclusive, minus one for the line through the origin.

Our weight invariant computes $\mathrm{dis}(E) \mod d$ up to sign. This is a well-known fact in the theory of lattices that we attribute to folklore.

\begin{proposition}[lattice-distance interpretation of the weight $W$]
\label{prop:lattice_distance}
Let $E_{p \to q}$ be a counterclockwise oriented minimal segment. Then

\begin{equation}
W(E_{p \to q}) = \mathrm{dis}(E_{p \to q}) \mod d.
\label{eqn:dist}
\end{equation}

\end{proposition}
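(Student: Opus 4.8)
The plan is to relate the algebraic weight $W(E_{p\to q})$, defined as a determinant modulo $d$, to the geometric count $\mathrm{dis}(E_{p\to q})$ by identifying both with the same quantity: the number of parallel $\mathcal{L}_d$-lines crossed between the origin and $L_E$. First I would clear denominators by passing to the scaled lattice: under multiplication by $d$, the lattice $\mathcal{L}_d$ becomes the standard integer lattice $\mathbb{Z}\times\mathbb{Z}$, the points $p=(w/d,x/d)$ and $q=(y/d,z/d)$ become the integer points $(w,x)$ and $(y,z)$, and the quantity $\mathrm{dis}(E)$ is preserved since it counts parallel lattice lines, a purely combinatorial datum invariant under the uniform scaling. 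Thus it suffices to prove the statement for an integer-endpoint minimal segment in $\mathbb{Z}\times\mathbb{Z}$, where $W(E_{p\to q})=\det\begin{pmatrix} w & y \\ x & z\end{pmatrix} \bmod d$ and the claim becomes that this determinant equals the lattice-distance modulo $d$.

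The key step is to give the lattice-distance its own algebraic description. Since $E$ is $d$-minimal, its direction vector $q-p=(y-w,\,z-x)$ is a primitive lattice vector, so I would choose a primitive normal vector $\mathbf{n}=(z-x,\,-(y-w))$ to $L_E$. The $\mathcal{L}_d$-lines parallel to $L_E$ are exactly the level sets $\{\mathbf{n}\cdot\mathbf{r}=k\}$ for integers $k$, consecutive integer values of $k$ labeling consecutive parallel lattice lines because $\mathbf{n}$ is primitive. The line $L_E$ itself sits at level $\mathbf{n}\cdot p$, and the line through the origin sits at level $0$; counting the lattice lines strictly between them and subtracting one for the origin's line yields $\mathrm{dis}(E)=|\mathbf{n}\cdot p|$. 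A direct computation gives $\mathbf{n}\cdot p = (z-x)w-(y-w)x = wz-xy = \det\begin{pmatrix} w & y \\ x & z\end{pmatrix}$, so $\mathrm{dis}(E)=|W(E_{p\to q})|$ as integers before reduction.

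The remaining step is to pin down the sign using the counterclockwise hypothesis, so that the equality holds modulo $d$ without an ambiguous sign. Here I would invoke Proposition \ref{prop:area}: the weight equals twice the signed relative area of the triangle with vertices $0,p,q$, which is positive precisely when $p\to q$ is counterclockwise. Since $E_{p\to q}$ is assumed counterclockwise oriented, the determinant $wz-xy$ is nonnegative, so its absolute value drops out and $\mathrm{dis}(E_{p\to q})=W(E_{p\to q})$ as integers, hence a fortiori modulo $d$.

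The main obstacle I anticipate is the sign and degenerate-case bookkeeping rather than the core computation. One must check that the primitivity of the normal vector really makes consecutive integer levels correspond to consecutive parallel lattice lines (so that the raw count of lines equals $|\mathbf{n}\cdot p|$ exactly, with the $-1$ absorbing the origin's line), and one must handle the boundary convention when $p,q$ lie on a line through the origin, where $\mathrm{dis}(E)=0$ and the determinant vanishes, matching Definition \ref{def:counterclock}'s convention that such an edge is oriented both ways. Aligning the geometric sign convention of Proposition \ref{prop:area} with the counterclockwise orientation in the statement is the delicate point that ties the two interpretations together.
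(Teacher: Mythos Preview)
Your argument is correct but proceeds by a different route from the paper's. The paper leans entirely on the area interpretation (Proposition~\ref{prop:area}): with $T$ the triangle on $0,p,q$, it observes that in the relative $\mathcal{L}_d$ metric the base $E$ has length $1$ (minimality) and the height from that base is exactly $\mathrm{dis}(E)$, so $2\,\mathrm{Area}(T)=\mathrm{dis}(E)$, and the counterclockwise hypothesis makes $W(E_{p\to q})=2\,\mathrm{Area}(T)$. You instead compute $\mathrm{dis}(E)$ from scratch via the primitive normal vector and level sets, obtaining the exact integer identity $\mathrm{dis}(E)=|wz-xy|$ before any reduction, and then invoke Proposition~\ref{prop:area} only to resolve the sign. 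Your approach is more algebraic and yields a slightly sharper intermediate statement (equality of integers, not just of residues); the paper's is quicker and more geometric, tying the two interpretations together through the base-times-height formula without re-deriving the lattice-line count. One small wording slip: your sentence ``counting the lattice lines strictly between them and subtracting one for the origin's line'' does not match the paper's definition of $\mathrm{dis}$, which counts lines \emph{inclusive} of both endpoints and then subtracts one---your conclusion $\mathrm{dis}(E)=|\mathbf{n}\cdot p|$ is nonetheless correct.
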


\begin{proof}

Note from Proposition \ref{prop:area} that $W(E_{p \to q}) = 2 \mathrm{Area}(T) \mod d$, where $T$ is the triangle with vertices $p$, $q$, and the origin. Let $E$ be the base of the triangle $T$. Then the relative length of $E$ in $\mathcal{L}_d$ is $1$, since $E$ is minimal. Observe that the relative length $h$ of the height of triangle $T$ from base $E$ is given by the relative length of $S_E^{\perp}$ (see Figure \ref{fig:lattice_dist}). Thus, by definition of lattice distance, $h = \mathrm{dis}(E)$.

Therefore, we see $2 Area(T) = 2 (\frac{1}{2})(1) \mathrm{dis}(E) = \mathrm{dis}(E) \mod d$.

The proposition statement follows by Proposition \ref{prop:area} because $E_{p \to q}$ is oriented counterclockwise: $W(E_{p \to q}) = 2 Area(T) = \mathrm{dis}(E_{p \to q}) \mod d$.

\end{proof}

\end{subsection}

\begin{subsection}{Classifying $G$-orbits of Minimal Triangles via Weights}
\label{subsec:classify}

Using the geometric properties described in Propositions \ref{prop:dihedral} and \ref{prop:lattice_distance}, we can classify $d$-minimal triangles by the weights of their minimal edges. We begin with the definition of this weight.

\begin{definition}
\label{def:weight_triangle}
Let $T$ be a $d$-minimal triangle with vertices $p, q,$ and $r$. Orient $T$ counterclockwise, and suppose WLOG this orients the edges $E^1, E^2, E^3$ of $T$ as follows: $E^1_{p \to q}, E^2_{q \to r},$ and $E^3_{r \to p}$. Then we define the weight $W(T)$ of $T$ to be the (unordered) multiset as follows:

\begin{equation}
W(T) := \left\{ W(E^1_{p \to q}), W(E^2_{q \to r}), W(E^3_{r \to p}) \right\}.
\label{eqn:weight_triangles}
\end{equation}

\end{definition}

First, we observe that sum of weights of a minimal triangle is equal to $1 \mod d$.

\begin{proposition}
\label{prop:area2}
Suppose $W(T) = \{a, b, c\}$. Then $a + b + c \equiv 1 \mod d$.
\end{proposition}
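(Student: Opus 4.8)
The plan is to use the area interpretation of the weight from Proposition~\ref{prop:area}, which converts each edge weight into twice the relative area of a triangle formed with the origin. The key observation is that the three edges $E^1_{p \to q}$, $E^2_{q \to r}$, $E^3_{r \to p}$, taken in the counterclockwise orientation, trace out the boundary of $T$, and the three auxiliary triangles (origin with each edge) should assemble to recover $T$ itself, up to a signed-area bookkeeping that is exactly captured modulo $d$.

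First I would write $a = W(E^1_{p \to q})$, $b = W(E^2_{q \to r})$, $c = W(E^3_{r \to p})$, and apply Proposition~\ref{prop:area} to express each as twice a relative area modulo $d$. Concretely, with $p = (p_1, p_2)$, $q = (q_1, q_2)$, $r = (r_1, r_2)$ scaled appropriately, the weight $W(E^1_{p \to q})$ equals the $2 \times 2$ determinant $\det\begin{pmatrix} d p_1 & d q_1 \\ d p_2 & d q_2 \end{pmatrix} \bmod d$, and similarly for the other two edges with the endpoints cycled. The main step is then to add these three determinants and recognize the sum as the signed area formula for the triangle $T$ with vertices $p, q, r$.

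The central computation is the classical identity that the signed area of the triangle $pqr$ decomposes as
\begin{equation*}
\tfrac{1}{2}\left[ (p_1 q_2 - q_1 p_2) + (q_1 r_2 - r_1 q_2) + (r_1 p_2 - p_1 r_2) \right],
\end{equation*}
so that $a + b + c$ equals $2 \,\mathrm{Area}(T)$ computed in the lattice $\mathcal{L}_d$, modulo $d$. Since $T$ is $d$-minimal, its relative area in $\mathcal{L}_d$ is exactly $\tfrac{1}{2}$ (a $d$-minimal triangle has the smallest possible relative area, namely half the fundamental square), so $2\,\mathrm{Area}(T) = 1$, giving $a + b + c \equiv 1 \bmod d$.

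The step I expect to require the most care is the sign and orientation bookkeeping: Proposition~\ref{prop:area} assigns each edge weight as twice the area of an auxiliary origin-triangle with a prescribed orientation, and I must confirm that orienting $T$ counterclockwise forces all three edge-orientations to contribute with a consistent sign so that their determinants telescope into the single signed area of $T$ rather than partially cancelling. Because the weight is only defined modulo $d$ and only up to sign under $G$ (Proposition~\ref{prop:weight_inv}), I would pin down the convention by fixing the counterclockwise orientation throughout, as in Definition~\ref{def:weight_triangle}, and verify the telescoping directly from the determinant expansion rather than relying on geometric intuition about the auxiliary triangles, which may individually be degenerate or negatively oriented.
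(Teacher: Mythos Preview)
Your proposal is correct and is essentially the same argument as the paper's: both express $a+b+c$ as the sum of the three $2\times 2$ determinants giving the edge weights, recognize this sum as $2\,\mathrm{Area}(T)$ in the $\mathcal{L}_d$-scale, and finish using that a $d$-minimal triangle has relative area $\tfrac{1}{2}$. The paper phrases the telescoping step as cofactor expansion of the $3\times 3$ area determinant $\det\begin{psmallmatrix}1&1&1\\ u&w&y\\ v&x&z\end{psmallmatrix}$ (handling the sign by noting $-W(E_{p\to r})=W(E_{r\to p})$), which is exactly the shoelace identity you wrote out, so your caution about orientation bookkeeping is well placed but ultimately resolved by the same one-line determinant computation.
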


\begin{proof}
Use the same setup as in Definition \ref{def:weight_triangle}. Let $T$ be a $d$-minimal triangle with vertices $p, q,$ and $r$. Orient $T$ counterclockwise, and suppose WLOG this orients the edges $E^1, E^2, E^3$ of $T$ as follows: $E^1_{p \to q}, E^2_{q \to r},$ and $E^3_{r \to p}$. Then

\begin{equation*}
\left\{ W(E^1_{p \to q}), W(E^2_{q \to r}), W(E^3_{r \to p}) \right\} = \{a, b, c\}.
\end{equation*}

Moreover, assume $p = (u/d, v/d)$, $q = (w/d, x/d)$, and $r = (y/d, z/d)$. Since $T$ is oriented counterclockwise, its relative area in $\mathcal{L}_d$ is given by the following determinant (this is a fact from high-school analytic geometry)

\begin{equation*}
Area(T) = \frac{1}{2} \det \begin{pmatrix} 1 & 1 & 1 \\ u & w & y \\ v & x & z \end{pmatrix}.
\end{equation*}

Now computing this determinant with cofactor expansion, we have

\begin{equation*}
2 Area(T) =  \det \begin{pmatrix} w & y \\ x & z \end{pmatrix} - \det \begin{pmatrix} u & y \\ v & z \end{pmatrix} + \det \begin{pmatrix} u & w \\ v & x \end{pmatrix}.
\end{equation*}

Take residues of the above modulo $d$ and recall that the relative area of a $d$-minimal triangle is $\frac{1}{2}$, we have

\begin{equation*}
\begin{split}
2 Area(T) \equiv 1 \equiv W(E_{q \to r}) - W(E_{p \to r}) + W(E_{p \to q}) \mod d \\
\equiv W(E_{q \to r}) + W(E_{r \to p}) + W(E_{p \to q}) \equiv a + b + c \mod d.
\end{split}
\end{equation*}

\end{proof}

Next, this weighting is strictly invariant (not just up to sign) under the action of $G$.

\begin{proposition}[W(T) is invariant under $G$.\footnote{In fact $W(T)$ is preserved by equidecomposability relations as well, but the proof of this is more difficult. See Theorem \ref{thm:invariant}.}]
\label{prop:weight_invariant}
Let $T$ be a minimal triangle, and $g$ a $G$-map. Then

\begin{equation*}
W(T) = W(g(T)).
\end{equation*}
\end{proposition}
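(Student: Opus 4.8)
The plan is to deduce this strict invariance from the up-to-sign edge invariance already established in Proposition~\ref{prop:weight_inv}, splitting into two cases according to whether $g$ preserves or reverses orientation. Recall from Definition~\ref{def:weight_triangle} that $W(T)$ is the multiset of weights of the three edges of $T$ oriented by the counterclockwise orientation of $T$; the only content to check is that this multiset is unchanged by $g$. I will also use the elementary observation, immediate from Definition~\ref{def:weight}, that reversing the orientation of an edge negates its weight: swapping $p$ and $q$ swaps the two columns of the defining determinant, so $W(E_{q \to p}) \equiv -W(E_{p \to q}) \pmod d$.

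First I would treat the orientation-preserving case $\det g = 1$. Here $g$ maps the counterclockwise boundary of $T$ to the counterclockwise boundary of $g(T)$, so the three counterclockwise edges of $g(T)$ are exactly $g(E^1)_{g(p) \to g(q)}$, $g(E^2)_{g(q) \to g(r)}$, and $g(E^3)_{g(r) \to g(p)}$. By Proposition~\ref{prop:weight_inv}, since $\det g = 1$ each of these has the same weight as the corresponding edge of $T$, and therefore the two multisets coincide term by term.

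The orientation-reversing case $\det g = -1$ is where the real bookkeeping lives, and is the step I expect to be the main (if minor) obstacle. Now $g$ sends the counterclockwise cyclic order $p, q, r$ to the clockwise order $g(p), g(q), g(r)$, so the counterclockwise edges of $g(T)$ are the reverses of the $g$-images of the edges of $T$, for instance $g(E^1)_{g(q) \to g(p)}$ in place of $g(E^1)_{g(p) \to g(q)}$. Two sign changes now enter and must be tracked so that they cancel: Proposition~\ref{prop:weight_inv} contributes a factor $-1$ because $\det g = -1$, giving $W(g(E^1)_{g(p) \to g(q)}) \equiv -W(E^1_{p \to q})$, while the reorientation needed to make the edge counterclockwise contributes a second factor $-1$, giving $W(g(E^1)_{g(q) \to g(p)}) \equiv -W(g(E^1)_{g(p) \to g(q)}) \equiv W(E^1_{p \to q}) \pmod d$. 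The same computation applies to $E^2$ and $E^3$, so once again the counterclockwise edge weights of $g(T)$ agree with those of $T$ and the multisets are equal. The only thing to get right is this cancellation of signs; everything else is a direct appeal to Proposition~\ref{prop:weight_inv}.
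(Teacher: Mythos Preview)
Your proof is correct and follows essentially the same approach as the paper's own proof: both split into the orientation-preserving and orientation-reversing cases and appeal directly to Proposition~\ref{prop:weight_inv}, with the reversing case handled by the cancellation of the two sign changes. Your write-up is in fact slightly more explicit than the paper's about why the signs cancel in the second case.
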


\begin{proof}
This is a direct consequence of Proposition \ref{prop:weight_inv}. Suppose $T$ has vertices $p, q,$ and $r$, and that when oriented counterclockwise, its edges $E^1, E^2, E^3$ are oriented as follows: $E^1_{p \to q}, E^2_{q \to r}, E^3_{r \to p}$. Recall by Remark \ref{rmk:Gmap} that $G$-maps send facets to facets, edges to edges, and vertices to vertices.

If $g$ is orientation preserving, then $g(T)$ when oriented counterclockwise has its edges oriented as follows: $g(E^1)_{g(p) \to g(q)}, g(E^2)_{g(q) \to g(r)}, g(E^3)_{g(r) \to g(p)}$. Now use Proposition \ref{prop:weight_inv} and the assumption that $g$ is orientation preserving to conclude the statement of Proposition \ref{prop:weight_invariant}.

Likewise, if $g$ is orientation reversing, then $g(T)$ when oriented counterclockwise has its edges oriented in the reverse of the previous case. They would read as follows: $g(E^1)_{g(q) \to g(p)}, g(E^2)_{g(r) \to g(q)}, g(E^3)_{g(p) \to g(r)}$. Now use Proposition \ref{prop:weight_inv} and the assumption that $g$ is orientation reversing to conclude the statement of Proposition \ref{prop:weight_invariant}.
\end{proof}

Now we can show that the weight of a $d$-minimal triangle determines its $G$-orbit.

\begin{theorem}
\label{thm:weight_classification}
Two $d$-minimal triangles $S$ and $T$ are $G$-equivalent if and only if $W(S) = W(T)$.
\end{theorem}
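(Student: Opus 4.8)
The forward implication is immediate: if $S$ and $T$ are $G$-equivalent, then $W(S)=W(T)$ by Proposition \ref{prop:weight_invariant}. All of the content lies in the converse, so assume $W(S)=W(T)$. My first step would be to normalize both triangles. By Proposition \ref{prop:standard} each of $S,T$ is $G$-equivalent to a translate of $T_1$ lying in the unit square, and since $W$ is $G$-invariant these translates still have equal weights. Thus I may write $S=T_1+v$ and $T=T_1+w$ with $v=(a/d,b/d)$, $w=(c/d,e/d)$ and $0\le a,b,c,e\le d-1$. A direct determinant computation from Definition \ref{def:weight_triangle} gives
\[
W(T_1+v)=\{-a,\ -b,\ a+b+1\}\bmod d,
\]
and likewise for $w$; consistently with Proposition \ref{prop:area2} the three entries sum to $1$. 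Here the two legs of $T_1+v$ carry the weights $-a$ and $-b$, so if one knows which edges are the legs the parameter $(a,b)$ is recovered.

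The crucial step is to pin down the full $G$-orbit of $T_1+v$. By Proposition \ref{prop:dihedral}, any $G$-map sending one translate of $T_1$ to another has linear part in the dihedral group $D\cong D_3\cong S_3$, so it suffices to understand how the two generators act on the parameter. I would compute that the rotation $A$ sends $(a,b)\mapsto\bigl(-(a+b+1),\,a\bigr)\bmod d$ after translating the image back into the unit square, while the flip $B$ sends $(a,b)\mapsto(b,a)$. Reading off the effect on the weight triple, $A$ cyclically permutes $\{-a,-b,a+b+1\}$ and $B$ transposes two of its entries, so the six elements of $D$ realize the full symmetric group $S_3$ on the three weights. Consequently the $D$-orbit of the parameter $(a,b)$ consists of exactly the six pairs obtained by permuting the multiset $\{-a,-b,a+b+1\}$ and reading the first two slots as $(-a',-b')$.

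To close the loop I would use Proposition \ref{prop:area2} again: because the three weights sum to $1\bmod d$, for any entry $m$ of the multiset the other two sum to $1-m$, so no entry is intrinsically distinguished as ``the hypotenuse weight.'' Hence the full preimage of a multiset $\{x,y,z\}$ under the map $(a,b)\mapsto\{-a,-b,a+b+1\}$ is precisely the six parameter pairs obtained by choosing which two of $x,y,z$ play the role of $-a,-b$ — and this preimage coincides exactly with the $D$-orbit computed above. Therefore $W(S)=W(T)$ forces the parameter of $T$ into the $D$-orbit of the parameter of $S$, which by Proposition \ref{prop:dihedral} means $T_1+w=g(T_1+v)$ for some $G$-map $g$. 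Composing with the normalizations supplied by Proposition \ref{prop:standard} produces a $G$-map from $S$ to $T$, completing the converse.

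The main obstacle is exactly this middle passage: one must verify that the abstract $S_3$-symmetry visible in the weight multiset is genuinely realized by honest $G$-maps between unit-square representatives, rather than being a mere formal coincidence. This is where Propositions \ref{prop:dihedral} and \ref{prop:compatibility} do the essential work, guaranteeing that the only relevant transformations are the dihedral ones and that reduction modulo $\mathbb{Z}^2$ keeps us among translates of $T_1$; the bookkeeping of the integer translations needed to return each image to the unit square is the step demanding the most care. Degenerate cases in which the weight multiset has repeated entries require no separate argument, since the same closed-form description of the orbit and of the preimage applies verbatim.
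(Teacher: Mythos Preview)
Your argument is correct and follows essentially the same route as the paper: normalize via Proposition~\ref{prop:standard}, use the dihedral action from Propositions~\ref{prop:dihedral} and~\ref{prop:compatibility} to realize all permutations of the weight triple, and conclude that matching ordered weights force the translates to agree modulo $\mathbb{Z}^2$. The only cosmetic difference is that where the paper invokes the lattice-distance interpretation (Proposition~\ref{prop:lattice_distance}) to recover $(a,b)$ from the leg weights, you instead compute the explicit formula $W(T_1+v)=\{-a,-b,a+b+1\}$ directly from the determinant definition; these two steps are equivalent.
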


\begin{proof}
We showed the left to right direction in the previous result, Proposition \ref{prop:weight_invariant}.

Now, suppose $W(S) = W(T)$. Use Proposition \ref{prop:standard} to map $S$ and $T$ to translations $S'$ and $T'$, respectively, of the triangle $T_1 = \mathrm{Conv} \left((0,0), (\frac{1}{d}, 0), (0, \frac{1}{d}) \right)$.

Let's temporarily order the sets $W(S')$ and $W(T')$, starting from the hypotenuse and reading off weights counterclockwise.

By Propositions \ref{prop:dihedral} and \ref{prop:compatibility}, we may act on $S'$ by a $G$-map $g$ such that (1) $g(S')$ is still a translation of $T_1$ and (2) the ordered weight of $g(S')$ is a permutation of the ordered weight of $S'$. In fact, all permutations of orderings are possible because the dihedral group on $3$ elements is precisely the symmetric group on $3$ elements. See Figure \ref{fig:perms} for a particular permutation.

\begin{figure}[H]
    \includegraphics[keepaspectratio=true, width=17 cm]{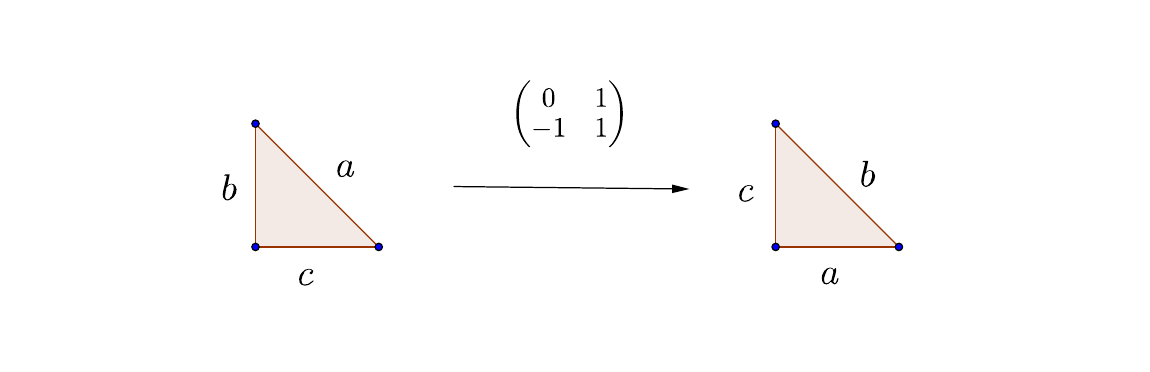}
    \caption{Each triangle in this figure is a translate of $T_1$. The matrix shown sends the ordered weight $\{ a, b, c\}$ to $\{b, a, c \}$. In general, each matrix from Proposition \ref{prop:dihedral} acts as one of the permutions in the symmetric group of three letters on $\{a, b, c\}$.}
    \label{fig:perms}
\end{figure}

Thus, we may choose a map $g$ so that the ordered weight of $T'$ and $S'' = g(S')$ agree. By Proposition \ref{prop:lattice_distance}, the lattice distance of the vertical (respectively, horizontal) edges of $T'$ and $S''$ agree modulo $d$. Therefore, the coordinates of the vertex of $T'$ opposite the hypotenuse must agree with the coordinates of the vertex of $S''$ opposite its hypotenuse modulo integer translation.

Since $S''$ and $T'$ have the same geometric form, we conclude that $S''$ is an integer translate of $T'$. Thus, $S$ and $T$ are $G$-equivalent.
\end{proof}

\end{subsection}

\end{section}

\begin{section}{Weight is an Invariant for Equidecomposability}
\label{sec:weight}

Our goal in this section is to generalize the weight $W$ to arbitrary rational polygons (not just $d$-minimal triangles) and show that it serves as an invariant for equidecomposability. That is, if $P$ and $Q$ are equidecomposable rational polygons, we will show $W(P) = W(Q)$. This enables us to provide a negative answer to Question \ref{que:ehr} of Haase--McAllister \cite{haase} in the case of finite rational discrete equidecomposability.

Explicitly, the $5$-minimal triangles labeled $(1,2)$ and $(1,4)$ on the bottom row of Figure \ref{fig:Phi_map} have the same Ehrhart quasi-polynomial (we show this with the aid of the computational software \texttt{LattE} \cite{LattE}) but have differing weights. Therefore, they cannot be finitely rationally discretely equidecomposable.

\begin{subsection}{Weight of a Rational Polygon}

We extend the notion of weight to arbitrary rational polygons below.

\begin{definition}[weight of a rational polygon]
\label{def:weight_poly}

Let $P$ be a counterclockwise oriented denominator $d$ polygon and $d'$ a positive integer divisible by $d$. Then we may uniquely regard the boundary of $P$ as a finite union $\sqcup E^i$ of oriented $d'$-minimal segments $\{ E^i \}$. We define the $d'$-weight $W_{d'}(P)$ of the polygon $P$ to be the following unordered multiset.

\begin{equation*}
W_{d'}(P) := \bigcup \left\{ W(E_i) \right\}.
\end{equation*}

\end{definition}

Observe that for a $d$-minimal triangle $T$, $W_d(T)$ agrees with $W(T)$ as described by Definition \ref{def:weight_triangle}. We will work an example for clarity.

\begin{example}
\label{exl:counterexample}
\normalfont

Let $T_{(1,2)} = \mathrm{Conv}((1/5,0), (0,1/5), (1/5,1/5))$ and \\ $T_{(1,4)} = \mathrm{Conv}((2/5,0),(1/5,1/5),(2/5,1/5))$. The triangles $T_{(1,2)}$ and $T_{(1,4)}$ are the denominator $5$ triangles labeled $(1,2)$ and $(1,4)$, respectively, on the bottom row of the right hand side of Figure \ref{fig:Phi_map}.

We compute the edge-weights in the multiset $W_5(T_{(1,2)})$ below. To do so, we orient $T_{(1,2)}$ counterclockwise. Also, in such computations modulo $d$, for our purposes, it is convenient to select our set of residues to be centered around $0$. For example, if $d = 5$, we choose our residues from the set $\{-2, -1, 0, 1, 2\}$.

\begin{align*}
W_5(E^1_{(0,1/5) \to (1/5,0)}) = \det \begin{pmatrix} 0 & 1 \\ 1 & 0 \end{pmatrix} \mod d = -1 \\
W_5(E^2_{(1/5, 1/5) \to (0, 1/5)}) = \det \begin{pmatrix} 1 & 0 \\ 1 & 1 \end{pmatrix} \mod d = 1 \\
W_5(E^3_{(1/5,0) \to (1/5,1/5)}) = \det \begin{pmatrix} 1 & 1 \\ 0 & 1  \end{pmatrix} \mod d = 1 \\
\end{align*}

So $W_5(T_{(1,2)}) = \{1, 1, -1 \}$. In the same fashion, we can compute $W_5(T_{(1,4)}) = \{2, -2, 1\}$. Observe that $W_5(T_{(1,2)}) \neq W_5(T_{(1,4)})$.

\end{example}

\end{subsection}

\begin{subsection}{$W_d$ is an Invariant for Equidecomposability}

Recall from Remark \ref{rmk:equi} and the discussion preceding that an equidecomposability relation $\mathcal{F}: P \to Q$ between denominator $d$ polygons may be regarded as an assignment of $G$-maps to a $d'$-minimal triangulation (precisely, an open simplicial decomposition consisting of $d'$ minimal edges and facets) $\mathcal{T}_1$ of $P$.\footnote{This is the motivation for us adding an extra parameter $d'$ to the weight $W_{d'}(P)$ of a rational polygon $P$.} Since $\mathcal{F}$ is a bijection and $G$-maps preserve the lattice $\mathcal{L}_{d'}$, $\mathcal{T}_1$ is sent to a $d'$-minimal triangulation $\mathcal{T}_2$ of $Q$. From this point on, we will write $\mathcal{F}_{d'}: (P, \mathcal{T}_1) \to (Q, \mathcal{T}_2)$ to indicate the underlying triangulations and their denominator. This convention is summarized in the following remark.

\begin{remark}
\label{rmk:equi_min}
Given an equidecomposability relation $\mathcal{F}:P \to Q$ and $d'$-minimal triangulations $\mathcal{T}_1$ and $\mathcal{T}_2$ of $P$ and $Q$, respectively, we write $\mathcal{F}_{d'}: (P, \mathcal{T}_1) \to (Q, \mathcal{T}_2)$ if $\mathcal{F}|_{F}$ is a $G$-map for all faces $F \in \mathcal{T}_1$ and $\mathcal{F}(F)$ is a face of $\mathcal{T}_2$. Moreover, we say $\mathcal{F}$ has denominator $d'$.
\end{remark}

Since weights of facets (that is, $d'$-minimal triangles) in $\mathcal{T}_1$ are preserved by $G$-maps (see Proposition \ref{prop:weight_inv}), we see the multiset of weights of facets comprising $\mathcal{T}_1$ must be in bijection with the multiset of weights of facets comprising $\mathcal{T}_2$. Concretely, if there is a triangle with weight $\omega$ in the triangulation $\mathcal{T}_1$, there must be a triangle of weight $\omega$ in $\mathcal{T}_2$ and vice versa.

\begin{figure}[H]
    \centering
    \includegraphics[keepaspectratio=true, width=8cm]{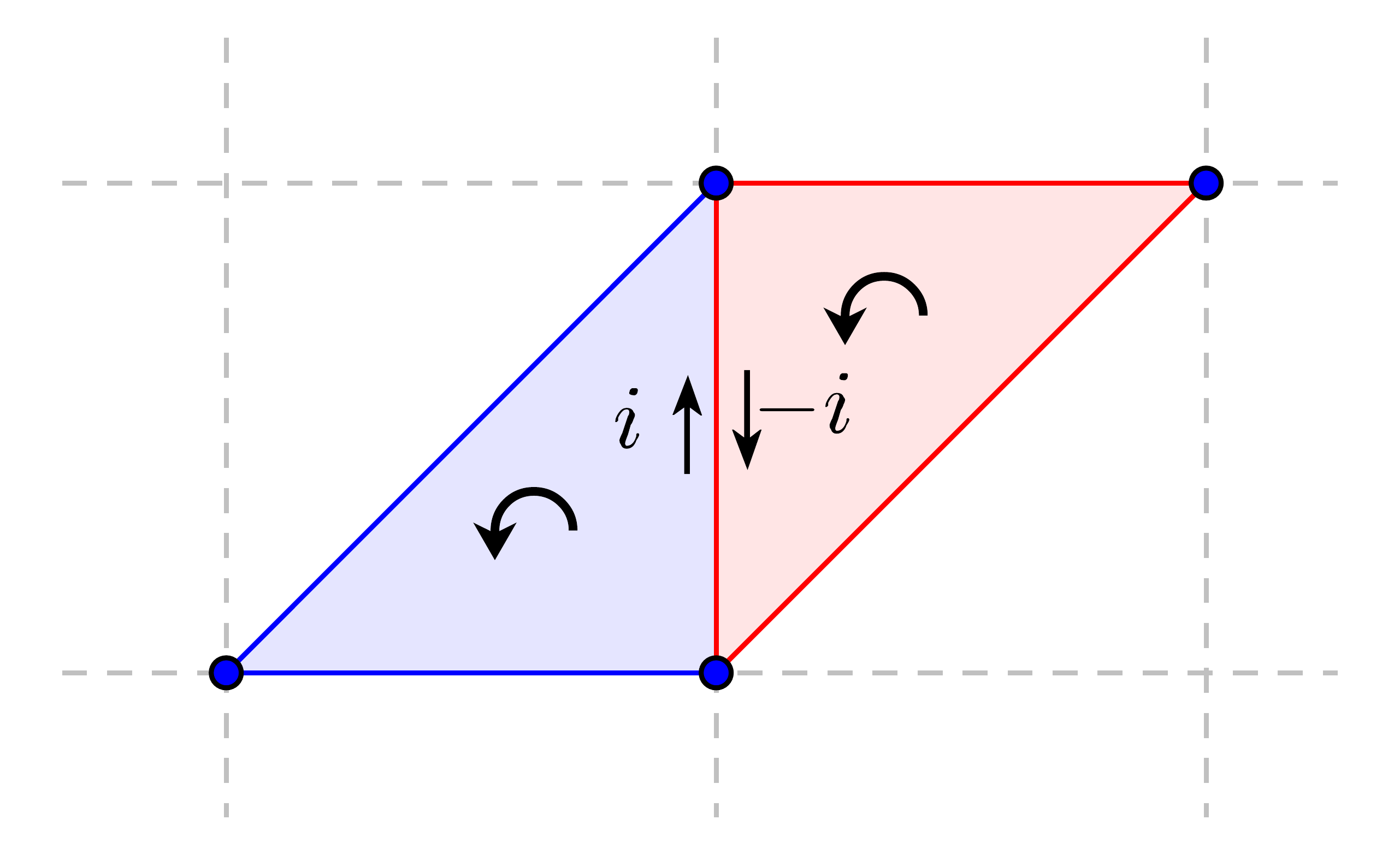}
    \caption{\small Two-sided edge-weighting system.}
    \label{fig:two_sided}
\end{figure}

The edges in $\mathcal{T}_1$ have a weighting (up to sign) induced by the weights of the facets. An edge $E$ in the interior of $\mathcal{T}_1$ is bordered by two facets $F_1$ and $F_2$ of $\mathcal{T}_1$. If the facet $F_1$ induces the weight $i$ on $E$, then $F_2$ induces the weight $-i$ on $E$, since both $F_1$ and $F_2$ are oriented counterclockwise. Hence, the induced weighting on the edges of $\mathcal{T}_1$ by the facets of $\mathcal{T}_1$ is properly regarded as a \emph{two-sided} edge-weighting system on interior edges.\footnote{This two-sided edge-weighting system on a triangulation is reminiscent, although not strictly speaking an example, of the \textit{current graphs} studied by Alpert and Gross \cite{gross, gross1}, Youngs \cite{youngs}, and Gustin \cite{gustin} in the setting of the Heawood map-coloring problem and related questions from topological graph theory. It is curious if there is a legitimate connection between our work and theirs, or if the similarity is only superficial.} See Figure \ref{fig:two_sided} for illustration. However, note that boundary edges in $\mathcal{T}_1$ only border a single facet in $\mathcal{T}_1$, and hence have a well-defined counterclockwise orientation and weight induced by the facet weights. In particular, the multiset of weights of boundary edges induced by the neighboring facets agrees with $W_{d'}(P)$.

To proceed further, we need the following definition.

\begin{definition}[$\pm i$ $d$-minimal edges]
\label{def:pmi_edges}
An oriented $d$-minimal edge $E$ is said to be a $\pm i$ $d$-minimal edge (or simply a $\pm i$ edge when the denominator is clear) if $W(E) = i$ or $W(E) = -i$.
\end{definition}

\begin{remark}
Note that the boundary edges of $\mathcal{T}_1$ with the unique weight of either $i$ or $-i$ induced by the orientation of $P$ are still considered to be $\pm i$ edges. Hence, the interior edges of $\mathcal{T}_1$ with a two-sided weight of $i$ and $-i$ as well as the boundary edges of $\mathcal{T}_1$ with a unique weight of either $i$ or $-i$ induced by the counterclockwise orientation on $P$ are all considered to be $\pm i$ $d'$-minimal edges according to Definition \ref{def:pmi_edges}.
\end{remark}

If $\mathcal{F}_{d'}: (P, \mathcal{T}_1) \to (Q, \mathcal{T}_2)$ is an equidecomposability relation of denominator $d'$ between denominator $d$ polygons $P$ and $Q$, we see by Proposition \ref{prop:weight_inv} the number of $\pm i$ $d'$-minimal edges in $\mathcal{T}_1$ is the same as the number of $\pm i$ weighted $d'$-minimal edges in $\mathcal{T}_2$. This is a $1$-dimensional analogue of the last statement of the second paragraph in this section. The preceding discussion is summarized by the following remark.

\begin{remark}
Suppose $\mathcal{F}_{d'}: (P, \mathcal{T}_1), (Q, \mathcal{T}_2)$ is an equidecomposability relation. Then

\begin{enumerate}
\label{rmk:preservation}
\item The multiset $\{W_{d'}(F)| \, F \, \mathrm{a} \, \mathrm{facet} \, \mathrm{in} \, \mathcal{T}_1 \}$ is in bijection with the multiset \\ $\{W_{d'}(F)| \, F \, \mathrm{a} \, \mathrm{facet} \, \mathrm{in} \, \mathcal{T}_2 \}$.
\item The set $\{E| \, E \, \mathrm{an} \, \mathrm{edge} \, \mathrm{in} \, \mathcal{T}_1, W(E) = \pm i \}$ is in bijection with the set \\ $\{E| \, E \, \mathrm{an} \, \mathrm{edge} \, \mathrm{in} \, \mathcal{T}_2, W(E) = \pm i \}$ for all residues $i$ modulo $d'$.
\end{enumerate}

\end{remark}

This motivates our next two definitions, the signed $d'$-weight $SW_{d'}(P)$ and unsigned $d'$-weight $UW_{d'}(P)$ of a rational denominator $d$ polygon $P$, where $d|d'$. These objects provide an indirect method of computing $W_{d'}(P)$ by playing off of cancellations and symmetries induced by the previously described two-sided weighting system on edges in a $d'$-minimal triangulation $\mathcal{T}_1$ of $P$. We can show with simple combinatorial arguments that $SW_{d'}(P)$ and $UW_{d'}(P)$ are invariant under denominator $d'$ equidecomposability relations, which implies, as we will show, that $W_{d'}(P)$ is also invariant under denominator $d'$ equidecomposability relations.

\begin{remark}
In all statements and definitions that follow in this section, $P$ and $Q$ denote denominator $d$ rational polygons, $d'$ is a positive integer divisible by $d$, and $\mathcal{F}_{d'}$ is an equidecomposability relation from $(P, \mathcal{T}_1)$ to $(Q, \mathcal{T}_2)$.
\end{remark}

\begin{definition}[signed $d'$-weight $SW_{d'}$]
\label{def:charge}
Fix a residue $i \mod d$ and let $\mathbbm{1}_i$ denote the indicator function of $i$ on the multiset $W_{d'}(P)$. Then $SW_{d'}(P)$ is a vector indexed by $\mathbb{Z}/d'\mathbb{Z}$ as follows.

\begin{equation*}
(SW_{d'}(P))_i = \sum _{j \in W_{d'}(P)} \mathbbm{1}_i(j) \, - \sum _{j \in W_{d'}(P)} \mathbbm{1}_{-i}(j)
\end{equation*}.

\end{definition}

\begin{example}
\normalfont
\label{exl:charge}
Recall triangles $T_{(1,2)}$ and $T_{(1,4)}$ from Example \ref{exl:counterexample}. Let's compute $SW_{5}$ of these two triangles. Recall that $W_5(T_{(1,2)}) = \{1, 1, -1 \}$ and $W_5(T_{(1,4)}) = (2, -2, 1)$. Let's represent $SW_{5}$ by a five-entry vector, starting with the $i = -2$ index and ending at the $i = 2$ index. Note that this accounts for all residues modulo $5$.

Then $SW_{5}(T_{(1,2)}) = SW_5(T_{(1,4)}) = \{0, -1, 0, 1, 0 \}$. As an example, let's show $SW_5(T_{(1,4)})_1 = 1$. By definition

\begin{equation*}
SW_5(T_{(1,4)})_1 = \sum _{j \in W_{5}(T_{(1,4)})} \mathbbm{1}_1(j) \, - \sum _{j \in W_{5}(T_{(1,4)})} \mathbbm{1}_{-1}(j) = 1 - 0 = 1.
\end{equation*}

\end{example}

\begin{definition}[unsigned $d'$-weight $UW_{d'}$]
\label{def:current}
Fix a residue $i \mod d$ and let $\mathbbm{1}_i$ denote the indicator function of $i$ on the multiset $W_{d'}(P)$. Then $UW_{d'}(P)$ is a vector indexed by $\mathbb{Z}/d'\mathbb{Z}$ as follows.

\begin{equation*}
(UW_{d'}(P))_i = \sum _{j \in W_{d'}(P)} \mathbbm{1}_i(j) \, + \sum _{j \in W_{d'}(P)} \mathbbm{1}_{-i}(j)
\end{equation*}.
\end{definition}

In other words, $(UW_{d'}(P))_i$ is the total number of edges in $W_{d'}(P)$ with weight $\pm i$.

\begin{example}
\label{exl:current}
\normalfont

Let's compute $UW_5$ of the triangles $T_{(1,2)}$ and $T_{(1,4)}$, recalling again that $W_5(T_{(1,2)}) = \{1, 1, -1 \}$ and $W_5(T_{(1,4)}) = \{2, -2, 1\}$. As in Example \ref{exl:charge}, let's index the 5-entry vector $UW_5$ so that the entries run starting from the index $i = -2$ and ending at the index $i = 2$. We see that $UW_5(T_{(1,2)}) = \{0,3 ,0,3 ,0 \}$ but $UW_5(T_{(1,4)}) = \{2, 1, 0, 1, 2 \}$. To be clear, we compute $UW_{5}(T_{(1,2)})_{-1}$ and $UW_5(T_{(1,4)})_{2}$ using the definition.

\begin{align*}
UW_{5}(T_{(1,2)})_{-1} = \sum _{j \in W_{d'}(T_{(1,2)})} \mathbbm{1}_{-1}(j) \, + \sum _{j \in W_{d'}(T_{(1,2)})} \mathbbm{1}_{1}(j) = 1 + 2 = 3 \\
UW_{5}(T_{(1,4)})_{2} = \sum _{j \in W_{d'}(T_{(1,4)})} \mathbbm{1}_{2}(j) \, + \sum _{j \in W_{d'}(T_{(1,4)})} \mathbbm{1}_{-2}(j) = 1 + 1 = 2.
\end{align*}

\end{example}

Now we prove the invariance under equidecomposability of the signed and unsigned weight.

One interpretation of the proof of Lemma \ref{lem:charge} is that $SW_{d'}$ is an \emph{additive valuation} (see \cite{mcmullen}).

\begin{lemma}[signed $d'$-weight invariance]
\label{lem:charge}
Let $\mathcal{F}_{d'}: (P, \mathcal{T}_1) \to (Q, \mathcal{T}_2)$. Then $SW_{d'}(P) = SW_{d'}(Q)$.
\end{lemma}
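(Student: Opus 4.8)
The plan is to show that $SW_{d'}$ is preserved by proving it is an additive valuation with respect to the two-sided edge-weighting, so that the only contributions surviving the sum come from the boundary, and those in turn are precisely the data recorded by $SW_{d'}(P)$ and $SW_{d'}(Q)$. First I would pass, via Remark \ref{rmk:equi_min}, to the $d'$-minimal triangulations $\mathcal{T}_1$ of $P$ and $\mathcal{T}_2$ of $Q$, and consider the quantity $\sum_{F} SW_{d'}(F)$ obtained by summing the signed weights over all facets $F$ of the triangulation, where for a facet $F$ with $W(F) = \{a,b,c\}$ we set $(SW_{d'}(F))_i$ to count its edges of weight $i$ minus its edges of weight $-i$. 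Since each $G$-map preserves facet weights by Proposition \ref{prop:weight_invariant}, the facet-summed quantity $\sum_{F \in \mathcal{T}_1} SW_{d'}(F)$ equals $\sum_{F \in \mathcal{T}_2} SW_{d'}(F)$: the multiset of facet weights is preserved by Remark \ref{rmk:preservation}(1), and $SW_{d'}$ of a facet depends only on its weight multiset.

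The key step is then to identify this facet-summed quantity with the boundary signed weight $SW_{d'}(P)$. Each interior edge $E$ of $\mathcal{T}_1$ is shared by two counterclockwise-oriented facets $F_1, F_2$; as noted in the discussion preceding Definition \ref{def:charge}, if $F_1$ induces weight $i$ on $E$ then $F_2$ induces weight $-i$. Hence in the sum $\sum_{F} SW_{d'}(F)$, the contribution of $E$ from $F_1$ adds $+1$ to the $i$-index and $-1$ to the $(-i)$-index, while the contribution from $F_2$ adds $+1$ to the $(-i)$-index and $-1$ to the $i$-index; these cancel exactly. Therefore every interior edge contributes zero, and the only surviving terms come from boundary edges, each of which borders a single facet and carries the unique counterclockwise weight induced by the orientation of $P$. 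By the final observation of the paragraph following Figure \ref{fig:two_sided}, the multiset of these boundary weights is exactly $W_{d'}(P)$, so $\sum_{F \in \mathcal{T}_1} SW_{d'}(F) = SW_{d'}(P)$, and identically $\sum_{F \in \mathcal{T}_2} SW_{d'}(F) = SW_{d'}(Q)$.

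Combining the two identities yields $SW_{d'}(P) = \sum_{F \in \mathcal{T}_1} SW_{d'}(F) = \sum_{F \in \mathcal{T}_2} SW_{d'}(F) = SW_{d'}(Q)$, which is the claim. I expect the main obstacle to be bookkeeping the signs carefully in the interior-edge cancellation: one must be sure that the two incident facets genuinely induce opposite-signed weights on their shared edge (so that orientation conventions are consistent across all of $\mathcal{T}_1$), and that the boundary edges are oriented so their weights assemble into $W_{d'}(P)$ rather than its negation. Both points follow from the counterclockwise orientation convention fixed in Definition \ref{def:weight_triangle} together with Proposition \ref{prop:weight_inv}, but they are the places where a sign error would invalidate the argument, so I would state the two-sided cancellation explicitly as the crux rather than leaving it implicit.
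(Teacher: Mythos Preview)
Your proposal is correct and follows essentially the same approach as the paper's proof: both establish the key identity $\sum_{F \in \mathcal{T}_1} SW_{d'}(F) = SW_{d'}(P)$ via cancellation of interior edges under the two-sided weighting, and then invoke invariance of facet weights under $G$-maps (Proposition \ref{prop:weight_invariant}) to transfer the sum from $\mathcal{T}_1$ to $\mathcal{T}_2$. Your anticipation that the crux is the sign bookkeeping for interior-edge cancellation matches exactly where the paper spends its effort, and your framing of $SW_{d'}$ as an additive valuation is precisely the interpretation the paper remarks on just before the lemma.
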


\begin{proof}

The key observation is that

\begin{equation}
\label{eqn:cancel}
\sum _{F \, \mathrm{a} \, \mathrm{facet} \, \mathrm{in} \, \mathcal{T}_1} SW_{d'}(F) = SW_{d'}(P)
\end{equation}
where we sum up the vectors $SW_{d'}(F)$ componentwise. Equation \ref{eqn:cancel} is justified as follows.

Let $i \mod d$ be a residue modulo $d$. We show

\begin{equation*}
\sum _{F \, \mathrm{a} \, \mathrm{facet} \, \mathrm{in} \, \mathcal{T}_1} SW_{d'}(F)_i = SW_{d'}(P)_i.
\end{equation*}

The LHS adds $1$ for every weight $i$ edge among the facets in $\mathcal{T}_1$ and adds $-1$ for every weight $-i$ edge among the facets in $\mathcal{T}_2$. Formally, we have:

\begin{equation}
\label{eqn:compute}
\begin{split}
\sum _{F \, \mathrm{a} \, \mathrm{facet} \, \mathrm{in} \, \mathcal{T}_1} SW_{d'}(F)_i =
\sum _{F \, \mathrm{a} \, \mathrm{facet} \, \mathrm{in} \, \mathcal{T}_1} \, \sum _{E \, \mathrm{an} \, \mathrm{edge} \, \mathrm{of} \, F} \mathbbm{1}_i(W(E)) - \mathbbm{1}_{-i}(W(E)) \\ = \sum _{F \, \mathrm{a} \, \mathrm{facet} \, \mathrm{in} \, \mathcal{T}_1} \, \sum _{E \, \mathrm{an} \, \mathrm{edge} \, \mathrm{of} \, F} \mathbbm{1}_i(W(E)) \, - \sum _{F \, \mathrm{a} \, \mathrm{facet} \, \mathrm{in} \, \mathcal{T}_1} \, \sum _{E \, \mathrm{an} \, \mathrm{edge} \, \mathrm{of} \, F} \mathbbm{1}_{-i}(W(E))
\end{split}
\end{equation}

Recall from Figure \ref{fig:two_sided} and the neighboring discussion that interior weight $i$ edges would appear once in the first summand of the RHS of Equation \ref{eqn:compute} with sign $+1$ and once in the second summand of the RHS of Equation \ref{eqn:compute} with sign $-1$. Therefore, the contribution of any interior edge to the sum in Equation \ref{eqn:compute} is $0$. Only the boundary edges with weight $i$ (when given the orientation induced by the counterclockwise orientation on $P$) need to be taken into account. Thus,

\begin{equation*}
\sum _{F \, \mathrm{a} \, \mathrm{facet} \, \mathrm{in} \, \mathcal{T}_1} SW_{d'}(F)_i = \sum _{\substack{E \in \mathcal{T}_1 \\ E \subset \partial P}} \mathbbm{1}_i(W(E)) - \mathbbm{1}_{-i}(W(E)) = SW_{d'}(P)_i,
\end{equation*}
where the last equality follows from unraveling Definitions \ref{def:charge} and \ref{def:weight_poly}. Thus, Equation \ref{eqn:cancel} holds.

Finally, the LHS of Equation \ref{eqn:cancel} is invariant under the equidecomposability relation $\mathcal{F}$, because $\mathcal{F}$ restricts to a $G$-map on facets of $\mathcal{T}_1$, and the weights of these facets are invariant under $G$-maps (see Proposition \ref{prop:weight_invariant} and \ref{rmk:equi_min}). That is,

\begin{equation*}
\begin{split}
\sum _{F \, \mathrm{a} \, \mathrm{facet} \, \mathrm{in} \, \mathcal{T}_1} SW_{d'}(F) = \sum _{\substack{\mathcal{F}(F) \, \mathrm{s.t.} \\ \, F \, \mathrm{a} \, \mathrm{facet} \, \mathrm{in} \, \mathcal{T}_1}} SW_{d'}(\mathcal{F}(F)) \\
= \sum  _{F' \, \mathrm{a} \, \mathrm{facet} \, \mathrm{in} \, \mathcal{T}_2} SW_{d'}(F') = SW_{d'}(Q).
\end{split}
\end{equation*}

The proof is complete.

\end{proof}

\begin{lemma}[unsigned $d'$-weight invariance]
\label{lem:current}
Let $\mathcal{F}_{d'}: (P, \mathcal{T}_1) \to (Q, \mathcal{T}_2)$. Then $UW_{d'}(P) = UW_{d'}(Q)$.
\end{lemma}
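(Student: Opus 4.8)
The plan is to mimic the structure of the proof of Lemma \ref{lem:charge}, but to account for the fact that the key cancellation there \emph{fails} for the unsigned weight. As before, I would begin by summing the facet-level quantity $UW_{d'}(F)$ over all facets $F$ of the triangulation $\mathcal{T}_1$ and comparing the result with $UW_{d'}(P)$. Fix a residue $i$ modulo $d'$, let $a_i$ denote the number of interior $\pm i$ edges of $\mathcal{T}_1$, and let $b_i$ denote the number of boundary $\pm i$ edges (so that $b_i = UW_{d'}(P)_i$ by Definitions \ref{def:weight_poly} and \ref{def:current}). Counting each edge once for each facet incident to it, and recalling that an interior edge borders two facets while a boundary edge borders one, gives the identity
\begin{equation*}
\sum_{F \,\mathrm{a}\,\mathrm{facet}\,\mathrm{in}\,\mathcal{T}_1} UW_{d'}(F)_i = 2 a_i + b_i.
\end{equation*}

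Here is where the argument diverges from Lemma \ref{lem:charge}, and this is the main obstacle. In the signed case, an interior edge contributes $+1$ as seen from one bordering facet and $-1$ as seen from the adjacent one, because the two facets induce opposite orientations (see Figure \ref{fig:two_sided}), so interior edges cancel and $\sum_F SW_{d'}(F)$ recovers $SW_{d'}(P)$ directly. But $UW_{d'}$ is insensitive to sign, so each interior $\pm i$ edge contributes $+1$ from \emph{each} of its two bordering facets and hence appears with coefficient $2$. Consequently $UW_{d'}(P)$ cannot be read off directly from $\sum_F UW_{d'}(F)$: the interior contribution $2a_i$ does not vanish.

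To overcome this, I would bring in a second invariant and solve a small linear system. The left-hand side above is invariant under $\mathcal{F}$ by exactly the reasoning used at the end of Lemma \ref{lem:charge}: $\mathcal{F}$ restricts to a $G$-map on each facet, the facets of $\mathcal{T}_1$ biject to those of $\mathcal{T}_2$, and $UW_{d'}(F) = UW_{d'}(\mathcal{F}(F))$ since $G$-maps preserve edge weights up to sign (Proposition \ref{prop:weight_inv}) while $UW_{d'}$ discards the sign. Separately, the \emph{total} number of $\pm i$ edges, $a_i + b_i$, is invariant under $\mathcal{F}$ by Remark \ref{rmk:preservation}(2), because $\mathcal{F}$ bijects all edges of $\mathcal{T}_1$ to all edges of $\mathcal{T}_2$ and preserves the $\pm i$ classification.

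Writing $a_i', b_i'$ for the corresponding counts in $\mathcal{T}_2$, these two invariances give $2a_i + b_i = 2a_i' + b_i'$ together with $a_i + b_i = a_i' + b_i'$. Subtracting the second relation from the first yields $a_i = a_i'$, and feeding this back into the second gives $b_i = b_i'$, that is, $UW_{d'}(P)_i = UW_{d'}(Q)_i$. Since $i$ was an arbitrary residue, this establishes $UW_{d'}(P) = UW_{d'}(Q)$. The one point requiring care in the full write-up is that the interior/boundary split of edges need not be preserved by $\mathcal{F}$ individually; the reason this never enters the conclusion is that both preserved quantities are symmetric combinations of $a_i$ and $b_i$, which is precisely what lets me isolate and conclude the invariance of $b_i$ alone.
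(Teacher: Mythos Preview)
Your proof is correct and follows essentially the same approach as the paper: both arguments isolate $UW_{d'}(P)_i$ from the two invariant quantities $\sum_F UW_{d'}(F)_i = 2a_i + b_i$ (the paper writes this as $|\Delta_1^{\pm i}| + 2|\Delta_2^{\pm i}| + 3|\Delta_3^{\pm i}|$) and the total edge count $a_i + b_i$ from Remark~\ref{rmk:preservation}. Your presentation avoids the auxiliary $\Delta_n^{\pm i}$ notation and is slightly more direct, but the underlying invariants and the linear-algebra step are the same.
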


\begin{proof}

We introduce some notation to streamline this proof. Let $\mathbbm{1}_{\pm i} = \mathbbm{1}_{i} + \mathbbm{1}_{-i}$, the indicator function for $i$ or $-i$. Let

\begin{equation}
\label{eqn:pmi_triangle}
\begin{split}
\Delta_n^{\pm i} (\mathcal{T}_1) = \left\{ F| \, F \, \mathrm{is} \, \mathrm{a} \, \mathrm{facet} \, \mathrm{in} \mathcal{T}_1 \, \mathrm{and} \, \sum _{E \, \mathrm{an} \, \mathrm{edge} \, \mathrm{in} \, F} \mathbbm{1}_{\pm i}(W(E)) = n  \right\}.
\end{split}
\end{equation}

That is, $\Delta_n^{\pm i} (\mathcal{T}_1)$ is the set of facets in $\mathcal{T}_1$ having precisely $n$ edges of weight $\pm i$.

When the underlying triangulation is clear, we omit the argument $\mathcal{T}_1$. For the next part of this proof until Equation \ref{eqn:unsigned_count}, we will take the underlying triangulation $\mathcal{T}_1$ as implicit and simply write $\Delta_n^{\pm i}$ to represent $\Delta_n^{\pm i} (\mathcal{T}_1)$.

We make the following claim:

\begin{equation}
\label{eqn:current}
\begin{split}
\sum _{E \, \mathrm{an} \, \mathrm{edge} \, \mathrm{in} \, \mathcal{T}_1} \mathbbm{1}_{\pm i}(W(E)) = \frac{1}{2}\left(|\Delta_1^{\pm i}| + 2|\Delta_2^{\pm i}| + 3|\Delta_3^{\pm i}|\right) \\
+ \frac{1}{2} \sum _{\substack{E \, \mathrm{an} \, \mathrm{edge} \, \mathrm{in} \, \mathcal{T}_1 \\ E \subset \partial P}} \mathbbm{1}_{\pm i}(W(E)).
\end{split}
\end{equation}

To see this, note that the LHS of Equation \ref{eqn:current} counts the total number of edges in $\mathcal{T}_1$ with weight $\pm i$. Now let's understand the RHS.

Let $E$ be an interior edge of $\mathcal{T}_1$ with weight $\pm i$. Then $E$ is bordered by two facets $F_1$ and $F_2$ in $\mathcal{T}_1$. Both of these triangles are counted precisely once  by the term $|\Delta_1^{\pm i}| + 2|\Delta_2^{\pm i}| + 3|\Delta_3^{\pm i}|$. Therefore, $E$ is counted precisely once by the RHS of Equation \ref{eqn:current} (note the normalizing factor $\frac{1}{2}$), since $E$ is not a boundary edge.

Similarly, if $E$ is a boundary edge of $\mathcal{T}_1$, then $E$ only borders one facet in $\mathcal{T}_1$. Therefore, $E$ is counted once by the expression $|\Delta_1^{\pm i}| + 2|\Delta_2^{\pm i}| + 3|\Delta_3^{\pm i}|$. Furthermore, $E$ is counted precisely once by the term

\begin{equation*}
\sum _{\substack{E \, \mathrm{an} \, \mathrm{edge} \, \mathrm{in} \, \mathcal{T}_1 \\ E \subset \partial P}} \mathbbm{1}_{\pm i}(W(E))
\end{equation*}.

Since we normalize by $\frac{1}{2}$, $E$ is counted precisely once on the RHS of Equation \ref{eqn:current}.

We make the following observation using Definition \ref{def:current} and Equation \ref{eqn:current}.

\begin{equation}
\label{eqn:unsigned_count}
\begin{split}
UW_{d'}(P)_i = \sum _{\substack{E \, \mathrm{an} \, \mathrm{edge} \, \mathrm{in} \, \mathcal{T}_1 \\ E \subset \partial P}} \mathbbm{1}_{\pm i}(W(E)) = 2 \left( \sum _{E \, \mathrm{an} \, \mathrm{edge} \, \mathrm{in} \, \mathcal{T}_1} \mathbbm{1}_{\pm i}(W(E)) \right) \\
 - \left( |\Delta_1^{\pm i} (\mathcal{T}_1)| + 2|\Delta_2^{\pm i}(\mathcal{T}_1)| + 3|\Delta_3^{\pm i}(\mathcal{T}_1)| \right)
\end{split}
\end{equation}

We observe using Remark \ref{rmk:preservation} that the total number of $\pm i$ weighted edges in $\mathcal{T}_1$ is preserved by $\mathcal{F}$. That is, $\mathcal{T}_2$ has the same amount of $\pm i$ weighted edges as $\mathcal{T}_1$. Also by Remark \ref{rmk:preservation}, $|\Delta_n^{\pm i}(\mathcal{T}_1)| = |\Delta_n^{\pm i}(\mathcal{T}_2)|$. Therefore, the RHS of Equation \ref{eqn:unsigned_count} is preserved by $\mathcal{F}$, which implies $UW_{d'}(P)$ is preserved by $\mathcal{F}$, as desired.

\end{proof}

\begin{lemma}
\label{lem:weight_agree}
The weight $W_{d'}(P)$ is uniquely determined by the signed weight $SW_{d'}(P)$ and unsigned weight $UW_{d'}(P)$.
\end{lemma}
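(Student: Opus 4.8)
The plan is to observe that the multiset $W_{d'}(P)$ is completely encoded by its multiplicity function, and then to recover that function from the entrywise data of $SW_{d'}(P)$ and $UW_{d'}(P)$ by inverting a $2 \times 2$ linear system at each residue. Concretely, for each $i \in \mathbb{Z}/d'\mathbb{Z}$ let $n_i := \sum_{j \in W_{d'}(P)} \mathbbm{1}_i(j)$ denote the number of edges in the boundary decomposition of $P$ whose weight equals $i$. Since an unordered multiset of residues is determined by how many times each residue occurs, it suffices to show that every $n_i$ is a function of $SW_{d'}(P)$ and $UW_{d'}(P)$.

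Unwinding Definitions \ref{def:charge} and \ref{def:current}, one reads off directly that $SW_{d'}(P)_i = n_i - n_{-i}$ and $UW_{d'}(P)_i = n_i + n_{-i}$ for every residue $i$. The first step is therefore to treat these as two equations in the two unknowns $n_i$ and $n_{-i}$. When $i \not\equiv -i \pmod{d'}$ the system is nondegenerate: adding the equations gives $n_i = \tfrac{1}{2}\left(UW_{d'}(P)_i + SW_{d'}(P)_i\right)$, so $n_i$ is recovered from the $i$-th entries of the two weight vectors.

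The only place the argument is not a pure inversion of an invertible matrix — and hence the step I expect to require care — is the self-paired residues, namely $i \equiv 0$ and, when $d'$ is even, $i \equiv d'/2 \pmod{d'}$. For such $i$ the indicator functions $\mathbbm{1}_i$ and $\mathbbm{1}_{-i}$ coincide, so $SW_{d'}(P)_i$ vanishes identically and carries no information; here I would instead use $UW_{d'}(P)_i = 2 n_i$ to conclude $n_i = \tfrac{1}{2} UW_{d'}(P)_i$. Collecting both cases shows every multiplicity $n_i$ is determined by the entries of $SW_{d'}(P)$ and $UW_{d'}(P)$, and hence the multiset $W_{d'}(P)$ is uniquely determined by these two vectors. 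I expect the substance of the proof to be exactly this bookkeeping: beyond isolating the self-paired residues, it is the elementary observation that knowing the sum and the difference of the pair $\{n_i, n_{-i}\}$ determines the pair itself.
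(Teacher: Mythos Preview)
Your proof is correct and follows exactly the paper's approach: define the multiplicities $n_i$, read off $n_i - n_{-i} = SW_{d'}(P)_i$ and $n_i + n_{-i} = UW_{d'}(P)_i$, and solve to get $n_i = \tfrac{1}{2}(SW_{d'}(P)_i + UW_{d'}(P)_i)$. Your separate treatment of the self-paired residues $i \equiv -i$ is more careful than the paper but in fact unnecessary, since the uniform formula already specializes correctly there (with $SW_{d'}(P)_i = 0$ and $UW_{d'}(P)_i = 2n_i$).
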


\begin{proof}

Let $n_i$ denote the number of times the residue $i$ occurs in the multiset $W_{d'}(P)$. From Definitions \ref{def:charge} and \ref{def:current} we see that $n_i - n_{-i} = SW_{d'}(P)_i$ and $n_i + n_{-i} = UW_{d'}(P)_i$. Therefore, $n_{i} = \frac{1}{2} (SW_{d'}(P) + UW_{d'}(P))$. Thus, the multiset $W_{d'}(P)$ is uniquely determined by $SW_{d'}(P)$ and $UW_{d'}(P)$.

\end{proof}

\begin{lemma}
\label{lem:unique}
If $W_{d'}(P) = W_{d'}(Q)$, then $W_{d}(P) = W_{d}(Q)$, where $d|d'$.
\end{lemma}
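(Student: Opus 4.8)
The plan is to exhibit a single combinatorial operation $\Psi_m$, depending only on $m := d'/d$ (and not on the polygon), such that $W_d(P) = \Psi_m\bigl(W_{d'}(P)\bigr)$ for every denominator $d$ polygon $P$. Granting this, the lemma is immediate: $W_{d'}(P) = W_{d'}(Q)$ forces $W_d(P) = \Psi_m(W_{d'}(P)) = \Psi_m(W_{d'}(Q)) = W_d(Q)$.

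To build $\Psi_m$, I would first group the minimal boundary segments according to the maximal edge of $P$ on which they lie. Fix such an edge $S$, oriented as on $\partial P$ (counterclockwise), with initial vertex $a$ and primitive integer direction $\widetilde u \in \mathbb{Z}^2$; write $\widetilde a := d\,a$ and $c_S := \widetilde a \times \widetilde u \in \mathbb{Z}$, where $\times$ denotes the scalar cross product $u\times v = u_1 v_2 - u_2 v_1$. Since consecutive $\mathcal{L}_d$-points of $S$ are spaced by $\widetilde u / d$ and consecutive $\mathcal{L}_{d'}$-points by $\widetilde u / d'$, the edge $S$ splits into $k_S$ $d$-minimal segments and into $k'_S = m\,k_S$ $d'$-minimal segments.

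The crux is that the weight is constant along $S$. Working in $d$-scaled coordinates, the $j$-th $d$-minimal subsegment runs from a point $\widetilde p_{j-1}$ to $\widetilde p_{j-1}+\widetilde u$, so by Definition \ref{def:weight} its weight is $\widetilde p_{j-1}\times(\widetilde p_{j-1}+\widetilde u) = \widetilde p_{j-1}\times \widetilde u \bmod d$; because successive $\widetilde p_{j-1}$ differ by $\widetilde u$ and $\widetilde u\times\widetilde u = 0$, this value is the same for every $j$, equal to $c_S \bmod d$. The identical computation in $d'$-scaled coordinates (where the initial vertex becomes $m\widetilde a$) shows that every $d'$-minimal subsegment of $S$ has the common weight $m\,c_S \bmod d'$. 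In particular, each element of $W_{d'}(P)$ is divisible by $m$, and $w'_S/m \equiv c_S \equiv w_S \pmod d$, where $w_S$ is the common $d$-weight and $w'_S$ the common $d'$-weight of $S$.

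This tells me exactly how to define $\Psi_m$: apply $x \mapsto (x/m)\bmod d$ to every element of the multiset $W_{d'}(P)$, then divide every resulting multiplicity by $m$. Edge $S$ contributes $m\,k_S$ copies of $w'_S$ to $W_{d'}(P)$; after the first step these become $m\,k_S$ copies of $w_S$, and after the multiplicity division exactly $k_S$ copies of $w_S$ --- precisely the contribution of $S$ to $W_d(P)$. Summing over all maximal edges recovers $W_d(P) = \Psi_m(W_{d'}(P))$, completing the argument. The steps needing the most care are the constancy-of-weight computation above and the verification that both normalizations are well defined on multisets: divisibility by $m$ is a property of residue classes modulo $d' = md$, so $x\mapsto(x/m)\bmod d$ is well defined, and every multiplicity arising is a sum of terms $m\,k_S$, hence divisible by $m$.
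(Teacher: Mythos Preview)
Your proof is correct and follows essentially the same route as the paper: both arguments observe that the $d'$-weight is constant along each boundary $d$-minimal segment and equals $m$ times the corresponding $d$-weight modulo $d'$, so that $W_d(P)$ is recovered from $W_{d'}(P)$ by the well-defined multiset operation $x\mapsto (x/m)\bmod d$ together with dividing all multiplicities by $m$. The only cosmetic differences are that you justify constancy via a direct cross-product computation whereas the paper invokes the lattice-distance interpretation (Proposition~\ref{prop:lattice_distance}), and you group by maximal edges of $P$ rather than by $d$-minimal segments; your treatment of $\Psi_m$ as an explicit multiset operation is arguably cleaner than the paper's slightly informal reconstruction.
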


\begin{proof}

Our proof strategy is that given $W_{d'}(P)$, we can reconstruct $W_{d}(P)$ \emph{uniquely}. If we can show this, then the proposition statement is justified.

Let $n = d'/d$. Label the counterclockwise oriented boundary minimal segments of $P$ as $\{ E^i \}_{i = 1} ^N$. Each $E^j$ is subdivided into $n$ $d'$-minimal segments $\{ E^{j,k} \}_{k = 1}^n$ when we regard $P$ as an $\mathcal{L}_{d'}$ polygon for the purposes of computing $W_{d'}(P)$. For fixed $j$, each segment $E^{j,k}$ has the same $d'$-weight since each lies in the same line (apply Proposition \ref{prop:lattice_distance} to see this). Therefore,

\begin{equation*}
W_{d'}(P) = \bigcup _{j = 1} ^N \left[ \cup _{k = 1} ^n \{ W_{d'}(E^{j,k}) \} \right].
\end{equation*}

Furthermore, we can compute the weight of $W_{d'}(E^{j,k})$ directly from $W_{d}(E^j)$. Suppose $E^j$ goes from $p = (w/d,x/d)$ to $q = (y/d, z/d)$. Then the edge from $p = (w/d, x/d)$ to $q' = p + \frac{1}{d'} (y - w, z - x)$ is in the set $\{ E^{j,k} \}_{j = 1} ^n$. WLOG, say this $d'$-minimal edge is $E^{j,1}$. Then,

\begin{align*}
W_{d'}(E^{j,1}) = \det \begin{pmatrix} nw & nw + (y - w) \\ nx & nx + (z - x) \end{pmatrix} = \det \begin{pmatrix} nw & y - w  \\ nx  & z - x \end{pmatrix} \\
= nwz - nwx - nxy + nxw = n(wz - xy) \\
= n \det \begin{pmatrix} w & y \\ x & z \end{pmatrix} \mod d'
\end{align*}.

We claim that there exists a unique choice $r$ of residue modulo $d$ such that

\begin{equation}
\label{eqn:compute1}
W_{d'}(E^{j,1}) = nr \mod d'
\end{equation}.

Equation \ref{eqn:compute1} says that for some integer $t$,

\begin{equation*}
W_{d'}(E^{j,1}) = nr + td' = nr + tnd = n(r + td).
\end{equation*}

Therefore, the residue $n$ divides $W_{d'}(E^{j,1})$ and we get

\begin{equation*}
(W_{d'}(E^{j,1})/n) = r + td \Leftrightarrow  r \equiv W_{d'}(E^{j,1})/n \mod d.
\end{equation*}

We conclude that

\begin{equation*}
W_{d}(P) = \bigcup _{j = 1} ^N \left \{ \left(W_{d'}(E^{j,1})/n\right) \right\}.
\end{equation*}

Indeed, $W_d(P)$ is uniquely determined by $W_{d'}(P)$.

\end{proof}

\begin{theorem}[Weight $W_d$ is an invariant for equidecomposability]
\label{thm:invariant}
Let $P$ be a denominator $d$ rational polygon. Let $\mathcal{F}: P \to Q$ be an equidecomposability relation. Then $W_d(P) = W_d(Q)$.
\end{theorem}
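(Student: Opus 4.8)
The plan is to assemble the four lemmas just established into a single chain of equalities, carried out first at an auxiliary denominator $d'$ and then transported back down to the intrinsic denominator $d$. All of the analytic content already lives in those lemmas, so the theorem is really a bookkeeping statement that glues them together.

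First I would invoke Remark \ref{rmk:equi} together with Remark \ref{rmk:equi_min} to realize the given relation concretely as $\mathcal{F}_{d'}: (P, \mathcal{T}_1) \to (Q, \mathcal{T}_2)$, where $\mathcal{T}_1$ and $\mathcal{T}_2$ are $d'$-minimal triangulations of $P$ and $Q$, and where $d \mid d'$ by Remark \ref{rmk:denominator}. The subtle point of this step is that an \emph{arbitrary} equidecomposability relation does not arrive packaged with a triangulation; one must refine to a minimal triangulation at a single, well-defined denominator $d'$ that is simultaneously the denominator of the domain and codomain triangulations. That the same $d'$ works for both $\mathcal{T}_1$ and $\mathcal{T}_2$ is exactly the content of Remark \ref{rmk:denominator}, since $G$-maps preserve denominators and biject the vertices of $\mathcal{T}_1$ onto those of $\mathcal{T}_2$. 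Without this reduction the edge- and facet-weight bookkeeping of the previous lemmas would have nothing to act on.

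With this setup fixed, the core deduction is short. By Lemma \ref{lem:charge} the signed weight is preserved, so $SW_{d'}(P) = SW_{d'}(Q)$, and by Lemma \ref{lem:current} the unsigned weight is preserved, so $UW_{d'}(P) = UW_{d'}(Q)$. Lemma \ref{lem:weight_agree} tells us that the multiset $W_{d'}$ is reconstructed componentwise from this pair via the multiplicities $n_i = \tfrac{1}{2}\bigl((SW_{d'})_i + (UW_{d'})_i\bigr)$. Since both vectors agree for $P$ and $Q$, every multiplicity $n_i$ agrees, whence $W_{d'}(P) = W_{d'}(Q)$. Finally I would descend from $d'$ to $d$ by applying Lemma \ref{lem:unique}: because $d \mid d'$ and $W_{d'}(P) = W_{d'}(Q)$, the $d$-weights are forced to coincide, giving $W_d(P) = W_d(Q)$, which is the assertion of the theorem.

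I do not anticipate a genuine obstacle at the level of the theorem itself, because the substance has been front-loaded into the lemmas: the two-sided edge-weighting cancellation behind Lemma \ref{lem:charge} and the $|\Delta_n^{\pm i}|$ counting behind Lemma \ref{lem:current} are where the real argument resides, and the reconstruction in Lemma \ref{lem:weight_agree} and the denominator-descent in Lemma \ref{lem:unique} are the remaining structural ingredients. The only place that demands any care is the opening move — certifying that $\mathcal{F}$ may be viewed at one common denominator $d'$ shared by $\mathcal{T}_1$ and $\mathcal{T}_2$ — and that is precisely what the standing conventions of this section and Remark \ref{rmk:denominator} guarantee.
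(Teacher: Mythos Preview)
Your proposal is correct and follows exactly the same route as the paper's proof: realize $\mathcal{F}$ at a common denominator $d'$ with $d\mid d'$, apply Lemmas \ref{lem:charge} and \ref{lem:current} to obtain $SW_{d'}(P)=SW_{d'}(Q)$ and $UW_{d'}(P)=UW_{d'}(Q)$, then use Lemmas \ref{lem:weight_agree} and \ref{lem:unique} to conclude $W_d(P)=W_d(Q)$. The paper's version is simply more terse, compressing your discussion into two sentences.
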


\begin{proof}
If $\mathcal{F}_{d'}: P \to Q$,\footnote{Recall by Remark \ref{rmk:denominator} that $d'$ must be divisble by $d$ if $\mathcal{F}_{d'}$ is an equidecomposability relation between denominator $d$ polygons $P$ and $Q$.} then by Lemmas \ref{lem:charge} and \ref{lem:current}, $SW_{d'}(P) = SW_{d'}(Q)$ and $UW_{d'}(P) = UW_{d'}(Q)$. By Lemmas \ref{lem:weight_agree} and Lemma \ref{lem:unique}, this implies $W_{d}(P) = W_{d}(Q)$, as desired.
\end{proof}

\begin{remark}
Note that all our definitions and results immediately generalize to the case where $P$ or $Q$ is a finite union of rational polygons. Moreover, nowhere have we used any assumptions about convexity, and in general, we make no assumptions about convexity for this entire paper.
\end{remark}

Theorem \ref{thm:invariant} comes with the following interesting corollary regarding minimal triangles.

\begin{corollary}
\label{cor:pseudoatomic}

If $S$ and $T$ are $d$-minimal triangles, then the following are equivalent.

\begin{enumerate}
\item $S$ and $T$ are $G$-equivalent.
\item $W(S) = W(T)$.
\item $S$ and $T$ are finitely rationally discretely equidecomposable.
\end{enumerate}

\end{corollary}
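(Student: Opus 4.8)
The plan is to establish the three-way equivalence by proving a cycle of implications, leveraging the machinery already developed in the excerpt. First I would observe that $(1) \Rightarrow (2)$ is already done: this is precisely the left-to-right direction of Theorem \ref{thm:weight_classification}, and also follows from Proposition \ref{prop:weight_invariant}. Likewise, $(2) \Rightarrow (1)$ is the right-to-left direction of Theorem \ref{thm:weight_classification}, so the equivalence of $(1)$ and $(2)$ requires no new work and can simply be cited.

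The substance of the corollary is therefore tying statement $(3)$ into this equivalence. For $(1) \Rightarrow (3)$, I would note that $G$-equivalence is a special case of finite rational discrete equidecomposability: if $S$ and $T$ are $G$-equivalent, then a single $G$-map $g$ with $g(S) = T$ exhibits a (trivial, one-piece) equidecomposability relation, taking the decomposition of $S$ to be $S$ itself as a single open simplex. Hence $(1) \Rightarrow (3)$ is immediate from Definition \ref{def:equi}.

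The interesting direction is $(3) \Rightarrow (2)$, and this is where I would invoke the main theorem of this section. If $S$ and $T$ are finitely rationally discretely equidecomposable, then in particular there is an equidecomposability relation $\mathcal{F}: S \to T$, so Theorem \ref{thm:invariant} applies and yields $W_d(S) = W_d(T)$. Since $S$ and $T$ are $d$-minimal triangles, the remark following Definition \ref{def:weight_poly} tells us that $W_d$ agrees with the triangle-weight $W$ of Definition \ref{def:weight_triangle}; thus $W(S) = W(T)$, which is exactly statement $(2)$.

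Assembling these gives the cycle $(1) \Rightarrow (3) \Rightarrow (2) \Rightarrow (1)$ (together with $(1) \Rightarrow (2)$, already available), establishing the full equivalence. I expect no genuine obstacle here, since the corollary is essentially a repackaging of Theorems \ref{thm:weight_classification} and \ref{thm:invariant} specialized to minimal triangles; the only point requiring care is the bookkeeping that $W_d(T)$ and $W(T)$ coincide for a $d$-minimal triangle $T$, so that the polygon-level invariant of Theorem \ref{thm:invariant} descends to the triangle-weight appearing in statement $(2)$.
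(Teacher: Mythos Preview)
Your proposal is correct and mirrors the paper's own proof essentially verbatim: the paper also cites Theorem \ref{thm:weight_classification} for $(1)\Leftrightarrow(2)$, invokes Theorem \ref{thm:invariant} for $(3)\Rightarrow(2)$, and observes that a $G$-map is itself an equidecomposability relation for $(1)\Rightarrow(3)$. The only cosmetic point is that in the $(1)\Rightarrow(3)$ step, a closed triangle $S$ is not literally a single open simplex, so the one-piece decomposition should really be the obvious simplicial decomposition of $S$ into its open facet, open edges, and vertices, each carrying the same $g$; this is harmless and the paper glosses over it too.
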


\begin{proof}

(1) $\Leftrightarrow$ (2) is a direct consequence of Theorem \ref{thm:weight_classification}. (3) $\Rightarrow$ (2) is the content of Theorem \ref{thm:invariant}. (1) $\Rightarrow$ (3) is true because a $G$-map from $S$ to $T$ is automatically an equidecomposability relation.

\end{proof}

In light of Corollary \ref{cor:pseudoatomic}, it is natural to try to classify the types of equidecomposability relations between $G$-equivalent minimal triangles $S$ and $T$. Are pure $G$-maps the only equidecomposability relations between $S$ and $T$? This question is related to a conjecture of Greenberg \cite{greenberg}. A positive answer would show, intuitively speaking, that minimal triangles behave like ``atoms'' with respect to equidecomposability relations. Such inquiries are revisited in Section \ref{sec:questions}.

\end{subsection}

\begin{subsection}{Ehrhart Equivalence Does not Imply Rational Finite Discrete Equidecomposability}

The computational software \texttt{LattE} \cite{LattE} can be used to show $\mathrm{ehr}_{T_{(1,2)}}(t) = \mathrm{ehr}_{T_{(1,4)}}(t)$.\footnote{The calculation can be done by hand. It suffices to check that $|tT_{(1,2)}\cap \mathbb{Z}^2|=|tT_{(1,4)}\cap \mathbb{Z}^2|$ for $1 \leq t \leq5$. } The explicit formulas are below.
    \begin{displaymath}
   \mathrm{ehr}_{T_{(1,2)}}(t) = \mathrm{ehr}_{T_{(1,4)}}(t) = \left\{
     \begin{array}{lr}
       \frac{x^2}{50}+\frac{3x}{50}-\frac{2}{25} & : t \equiv 1 \mod 5 \\
       \frac{x^2}{50}+\frac{x}{50}-\frac{3}{25} & : t \equiv 2 \mod 5 \\
       \frac{x^2}{50}-\frac{x}{50}-\frac{3}{25} & : t \equiv 3 \mod 5 \\
       \frac{x^2}{50}-\frac{3 x}{50}-\frac{2}{25} & : t \equiv 4 \mod 5 \\
       \frac{x^2}{50}+\frac{3x}{10} +1 & : t \equiv 5 \mod 5 \\
     \end{array}
   \right.
\end{displaymath}

However, by Theorem \ref{thm:invariant}, we see that $T_{(1,2)}$ and $T_{(1,4)}$ are not rationally, finitely equidecomposable because $W_5(T_{(1,2)}) = \{1, 1, -1 \} $ and $W_5(T_{(1,4)}) = \{ 2, -2, 1 \}$ This provides the partial negative answer to Haase--McAllister's \cite{haase} question of whether Ehrhart equivalence implies (general) equidecomposability (see Question \ref{que:ehr}).
\end{subsection}

\end{section}

\begin{section}{An Infinite Equidecomposability Relation}
\label{sec:infinite}
\begin{subsection}{Construction of the Infinite Equidecomposability Relation}
Previously, by section 3.3, we have shown that two special triangles $T_{(1,2)}$ and $T_{(1,4)}$ with the same Ehrhart quasi-polynomial are not equidecomposable. However, if we delete an edge from each triangle, there does exist an infinite equidecomposability relation mapping one to another. The existence of this infinite construction also explains why these two triangles share the same Ehrhart quasi-polynomial. It also raises many interesting problems discussed at the end of this section.

Label two edges of each of these triangles as follows. Let $e_1$ denote the closed edge of $T_{(1,2)}$ with endpoints $(\frac{1}{5},0)$, $(0,\frac{1}{5})$, let $e_2$ denote the closed edge with endpoints $(\frac{1}{5},0)$, $(\frac{1}{5},\frac{1}{5})$, let $e_3$ denote the closed edge of $T_{(1,4)}$ with endpoints $(\frac{2}{5},0)$, $(\frac{1}{5},\frac{1}{5})$, and let $e_4$ denote the closed edge with endpoints $(\frac{2}{5},0)$, $(\frac{2}{5},\frac{1}{5})$.

\begin{theorem}
\label{thm:infinite}
 Denote by $T_{(1,2)}$ the triangle with vertices $(\frac{1}{5},0)$, $(0,\frac{1}{5})$, $(\frac{1}{5},\frac{1}{5})$, and by $T_{(1,4)}$ the triangle with vertices $(\frac{2}{5},0)$, $(\frac{1}{5},\frac{1}{5})$, $(\frac{2}{5},\frac{1}{5})$. If we delete either one of $e_1$, $e_2$ from $T_{(1,2)}$ and either one of $e_3$, $e_4$ from $T_{(1,4)}$, the remaining $\Delta$-complexes are infinitely equidecomposable.
\end{theorem}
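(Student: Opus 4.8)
The plan is to construct the infinite equidecomposability relation explicitly by exhibiting a single $G$-map that, after deleting the prescribed edges, maps a triangle onto a slightly smaller triangle of the same shape, leaving an infinite ``staircase'' remainder to be handled inductively. First I would observe that the two triangles $T_{(1,2)}$ and $T_{(1,4)}$ are congruent right triangles (both are lattice translates of reflections of the standard triangle $T_1 = \mathrm{Conv}((0,0),(\tfrac{1}{5},0),(0,\tfrac{1}{5}))$), so up to a $G$-map the problem reduces to finding a piecewise $G$-bijection from the half-open triangle $T_{(1,2)} \setminus e_1$ (say) to the half-open triangle $T_{(1,4)} \setminus e_3$. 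The key idea is that although the single matrix $U = \begin{pmatrix} 1 & -2 \\ -1 & 1 \end{pmatrix}$ from the introduction (or its analogue) does not map $T_{(1,2)}$ to $T_{(1,4)}$ in one piece, iterating an appropriate shear-type $G$-map produces a nested sequence of sub-triangles whose images tile the target.

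The main steps I would carry out are as follows. I would set up a sequence of closed sub-triangles $\{\tau_k\}_{k \ge 1}$ inside $T_{(1,2)}$, each similar to $T_{(1,2)}$ and accumulating at a single vertex (the endpoint shared by the deleted edge and one other edge), so that $T_{(1,2)} \setminus e_1 = \bigsqcup_{k \ge 1} \sigma_k$ where each $\sigma_k$ is a half-open region of the form $\tau_k \setminus \tau_{k+1}$, possibly with one open edge removed to keep the union disjoint. I would then define a $G$-map $g_k$ on each piece $\sigma_k$ — most naturally taking $g_k = g \circ h^{k}$ where $h$ is a fixed parabolic (shear) element of $G$ fixing the accumulation vertex and $g$ is the fixed $G$-map realizing the congruence between the two triangles — and verify that the images $g_k(\sigma_k)$ partition $T_{(1,4)} \setminus e_3$ disjointly. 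The crucial points to check are that (i) each $\sigma_k$ is a genuine open-simplex decomposition (so that Definition 2.4 applies, with ``infinite'' merely relaxing the finiteness of $r$), (ii) the $g_k$ are honest $G$-maps, i.e. integer matrices of determinant $\pm 1$ together with integer translations, and (iii) the images exactly cover the half-open target with no overlaps and no gaps, which is where the deleted edges $e_1, e_3$ (resp. $e_2, e_4$) are essential — they absorb the boundary discrepancy at the accumulation point that would otherwise obstruct a bijection.

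The hard part will be bookkeeping the boundary: because we are taking an infinite disjoint union of half-open pieces, I must be scrupulous about exactly which open edges and vertices belong to which $\sigma_k$ so that the $g_k$ assemble into a genuine bijection of the half-open triangles, with the single ``missing'' accumulation vertex accounting precisely for the deleted edge on each side. This is why the theorem is stated for $\Delta$-complexes with a closed edge deleted rather than for the closed triangles themselves: the lattice point at the accumulation vertex is the obstruction detected by the weight invariant $W_5$ of Section 3, and removing an edge removes exactly that point from consideration. I would close by noting the compatibility with Ehrhart counting: since each $g_k$ preserves $\mathcal{L}_5$ and the decomposition is locally finite away from the accumulation vertex, the infinite relation preserves lattice-point counts of all dilates, which simultaneously explains the equality $\mathrm{ehr}_{T_{(1,2)}}(t) = \mathrm{ehr}_{T_{(1,4)}}(t)$ established computationally in Section 3.4. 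I expect verifying the exact tiling property (iii), rather than the algebra of the individual maps, to be the genuine obstacle, since it requires identifying the correct shear $h$ and confirming that the nested triangles shrink to a point under iteration while their images exhaust the target.
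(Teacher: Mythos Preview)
Your approach is essentially the paper's. Both use the family of horizontal shears $U_k=\left(\begin{smallmatrix}1&k\\0&1\end{smallmatrix}\right)=U_1^{\,k}$ (your $h^k$) to carry infinitely many pieces of $T_{(1,2)}\setminus e_1$ bijectively onto pieces of $T_{(1,4)}\setminus e_3$, the pieces accumulating at the $x$-axis vertex $(\tfrac{1}{5},0)$. The paper phrases the decomposition as a fan rather than a nested chain: it draws the lines from the external point $J=(\tfrac{2}{5},0)$ through the $\mathcal{L}_5$-points on $y=\tfrac{1}{5}$, cutting $T_{(1,2)}$ into regions $S_i$ (and a matching fan from $I=(\tfrac{1}{5},0)$ cuts $T_{(1,4)}$ into regions $R_i$), then simply verifies $U_i(S_i)=R_i$ piece by piece.

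Two specific claims in your plan need correction before it goes through. First, the nested remainders $\tau_k$ are \emph{not} Euclidean-similar to $T_{(1,2)}$: after removing the first piece one is left with the triangle on $(\tfrac{1}{5},0),(0,\tfrac{1}{5}),(\tfrac{1}{5},\tfrac{1}{10})$, which is not right-isosceles. Nothing in the argument actually needs similarity---only that the remaining triangle keeps its apex on the $x$-axis so that the next shear applies---so this is harmless once dropped. Second, and more substantively, there is no ``fixed $G$-map $g$ realizing the congruence between the two triangles'': that $T_{(1,2)}$ and $T_{(1,4)}$ are \emph{not} $G$-equivalent is precisely the content of $W_5(T_{(1,2)})\ne W_5(T_{(1,4)})$ from Section~\ref{sec:weight}, and the Euclidean translation by $(\tfrac{1}{5},0)$ that superimposes them does not lie in $G$. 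In the actual construction one takes $g=\mathrm{id}$ and $h=U_1$, so $g_k=U_1^{\,k}$; the shears alone do all the work. With these two fixes your outline becomes the paper's proof.
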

\begin{proof}

Since the unimodular transformation $U = \begin{pmatrix} 1 & 1 \\ 0 & 1 \end{pmatrix}$ maps $e_1$ to $e_2$ and $e_3$ to $e_4$, respectively, we delete either $e_1$ or $e_2$ from $T_{(1,2)}$ and either $e_3$ or $e_4$ from $T_{(1,4)}$, and claim that the remaining half-open $\Delta$-complexes \footnote{For our purposes, a $\Delta$-complex can be thought of as a disjoint union of open simplices. This is looser than the notion of simplicial complex because it is not required that the boundary of a face of a $\Delta$-complex be a part of the $\Delta$-complex, \emph{i.e.} half-open structures are allowed. See Chapter 2 of \cite{hatcher} for a more general topological definition of $\Delta$-complexes.} are infinitely equidecomposable.

Therefore WLOG we delete $e_1$ from $T_{(1,2)}$ and $e_3$ from $T_{(1,4)}$.
\\

\emph{Part I}: Choosing a $\Delta$-complex decomposition of $T_{(1,2)}$ and $T_{(1,4)}$ (\emph{Cutting})

Let $I = (\frac{1}{5},0)$. Construct all the lines ${\{l_{4,i}\}}$ connecting $I$ and the lattice points of $\frac{1}{5} \mathbb{Z} \times \frac{1}{5} \mathbb{Z}$ contained in the line $y = \frac{1}{5}$, starting from $(\frac{2}{5},\frac{1}{5})$ and going to the right. The $i^{th}$ line $l_{4,i}$ will divide $T_{(1,4)}$ into one more region. Denote the upper new region resulting from constructing $l_{4,i}$ as $R_i$. We restrict $R_i$ to be open. Let $r_{i,i+1}$ denote the open edge between $R_i$ and $R_{i+1}$. Also, $R_i$ has one side its open boundary bordering one of the two closed edges of $T_{(1,4)}$. We let  $n_i$ denote this open edge. Finally, let $N_i$ be the point of intersection of the line $l_{4,i}$ and the edge $e_4$. See Figure \ref{fig:infinite}. Thus we have
\begin{equation*}
T_{(1,4)}-e_4 = \bigsqcup_i \{R_i\} \bigsqcup_i \{r_{i,(i+1)}\} \bigsqcup_i \{n_i\} \bigsqcup_i \{N_i\}.
\end{equation*}

Next, choose another point $J = (\frac{2}{5},0)$. Construct all the lines ${\{l_{2,i}\}}$ connecting $J$ and the lattice points of $\frac{1}{5} \mathbb{Z} \times \frac{1}{5} \mathbb{Z}$ contained in the line $y = \frac{1}{5}$, starting from $(\frac{1}{5},\frac{1}{5})$ and going to the left. The $i^{th}$ line $l_{2,i}$ will divide $T_{(1,2)}$ into one more region. Denote the upper new open region resulted from cutting by $l_{2,i}$ as $S_i$. Let $s_{0,1}$ denote the open edge bordering $S_1$ that lies on the line $y = \frac{1}{5}$. Denote by $s_{i,i+1}$ the open edge lying between the regions $S_i$ and $S_{i+1}$. For each $S_i$, denote the open edge bordering $S_i$ that lies on $e_2$ of $T_{(1,2)}$ by $m_i$. Finally, let $M_i$ be the point of intersection of the line $l_{2,i}$ and the edge $e_2$. See Figure \ref{fig:infinite}. Thus,
\begin{equation*}
T_{(1,2)}-e_2 = \bigsqcup_i \{S_i\} \bigsqcup_i \{s_{(i-1),i}\} \bigsqcup_i \{m_i\} \bigsqcup_i \{M_i\}
\end{equation*}

\begin{figure}[H]
    \centering
    \includegraphics[keepaspectratio=true, width=15cm]{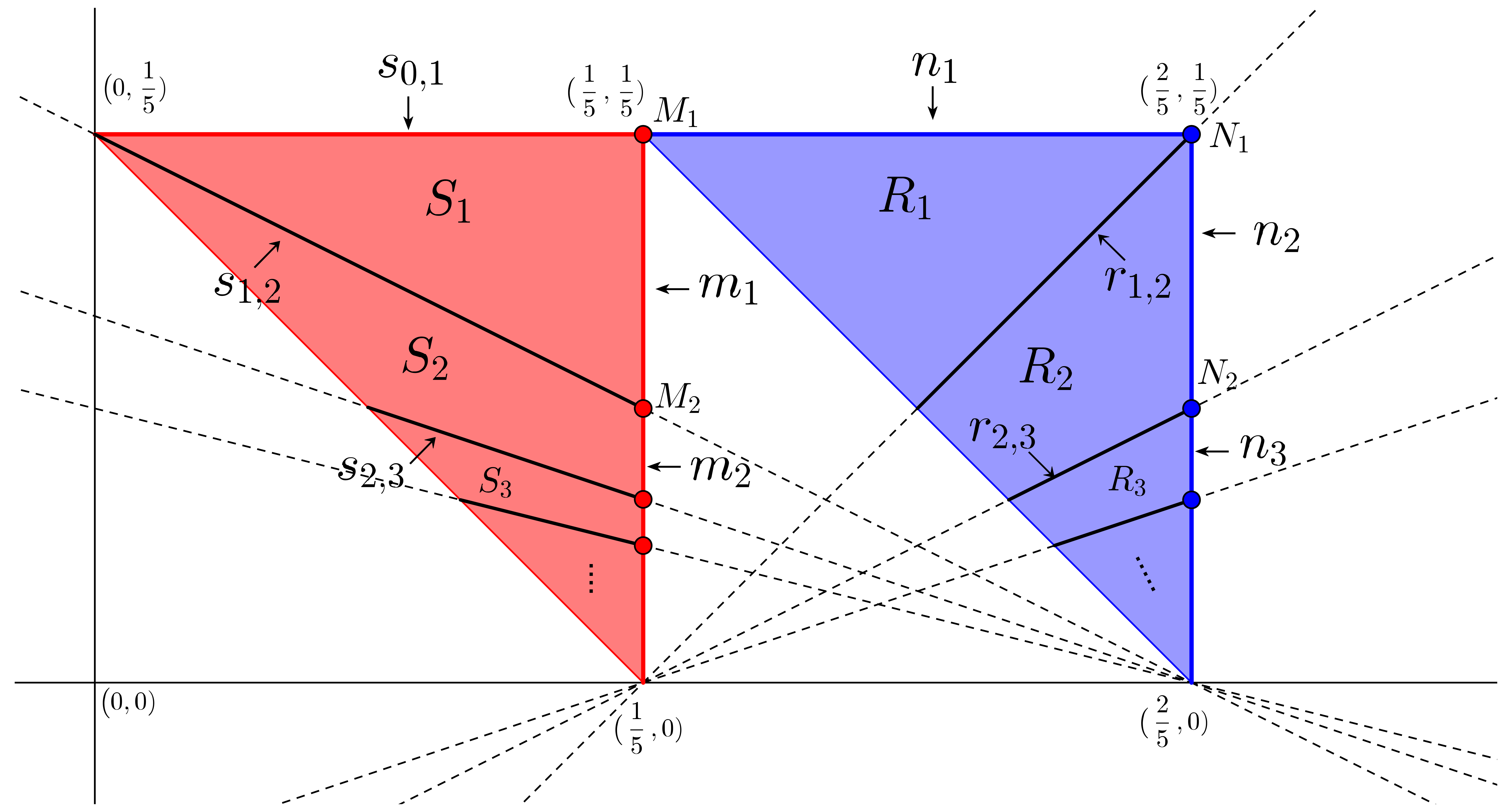}
    \caption{\small An Infinite Equidecomposability Relation. }
    \label{fig:infinite}
\end{figure}

Part II: Mapping the selected $\Delta$-complex decompositions (\emph{Pasting}).

Let $U_i = \begin{pmatrix} 1 & i \\ 0 & 1 \end{pmatrix}$. $U_i$ sends each $S_i$ to $R_i$, $s_{i-1,i}$ to $n_i$, $m_i$ to $r_{i,i+1}$, and $M_i$ to $N_i$. The details can be verified by writing down explicit expression of the coordinates of each piece. Intuitively, we can think of the transformation $U$ as mapping the triangle $T_{(1,2)}$ to cover $T_{(1,4)}$, step by step. Since $U_i$ fixes the points on $y=0$ but moves the lattice points on $y = \frac{1}{5}$ to the right by $i$ units of the lattice, $T_{(1,4)}$ is covered by $T_{(1,2)}$ completely.\\

\end{proof}

It is natural to ask if the Ehrhart quasi-polynomial is even preserved in general by equidecomposability relations consisting of an infinite amount of simplices. To do so, let's first provide the general definition of an Ehrhart function of a subset of $\mathbb{R}^2$.

Here we denote as $\mathrm{ehr}(t)$, the Ehrhart function (also referred to as the Ehrhart counting function), the number of lattice points in the $t^{th}$ dilate of a general subset $S \subset \mathbb{R}^2$. This function may not have a exact form, as in the case of rational polygons. However, whether or not it is a quasi-polynomial, the Ehrhart function is still well-defined. Moreover, observe that the definition provided below agrees with the Ehrhart function of a polygon provided in Equation \ref{eqn:ehrhart} of Section \ref{sec:introduction}.

\begin{definition}[Ehrhart function of a subset of $\mathbb{R}^2$]

Let $S$ be a bounded subset of $\mathbb{R}^2$. Then the \emph{Ehrhart function} of $S$ is defined to be

\begin{equation*}
\mathrm{ehr}_S(t) := |tS \cap \mathbb{Z}^2|
\end{equation*}

where $tS$ denotes the $t^{th}$ dilate of $S$.

\end{definition}

We require $S$ to be bounded so that $\mathrm{ehr}_S(t)$ is always finite. The fact that Ehrhart functions are preserved by potentially infinite equidecomposability relations follows fairly quickly from the definitions.

\begin{proposition}
\label{prop:ehr_fn}
Let $S$ and $S'$ be bounded subsets of $\mathbb{R}^2$ with (not necessarily finite or rational) $\Delta$-complex decompositions $\mathcal{T}$ and $\mathcal{T}'$, respectively. If $\mathcal{F}: (S, \mathcal{T}) \to (S', \mathcal{T}')$ is an equidecomposability relation, then

\begin{equation*}
\mathrm{ehr}_S(t) = \mathrm{ehr}_{S'}(t).
\end{equation*}

\end{proposition}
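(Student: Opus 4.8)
The plan is to reduce the counting of lattice points in $tS$ to a lattice-point count over the individual pieces of the decomposition, and then transport those counts across $\mathcal{F}$ piece-by-piece using the fact that each restriction is a $G$-map. The essential observation is that a $G$-map, being of the form $x \mapsto Ux + v$ with $U \in GL_2(\mathbb{Z})$ and $v \in \mathbb{Z}^2$, preserves the integer lattice $\mathbb{Z}^2$ bijectively, and moreover commutes with dilation in the linear part: if $g = U \rtimes v$, then for the dilated map one has $t\,g(x) = U(tx) + tv$, so that $g$ sends $\mathbb{Z}^2 \cap tT_i$ bijectively onto $\mathbb{Z}^2 \cap g(tT_i)$ whenever $t$ is a positive integer (since $tv \in \mathbb{Z}^2$). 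This is the one-piece version of the statement.

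The steps I would carry out are as follows. First I would invoke the hypothesis that $\mathcal{T} = \{T_i\}$ is a $\Delta$-complex decomposition of $S$, so that $S = \bigsqcup_i T_i$ is a \emph{disjoint} union of (open) simplices, and likewise $S' = \bigsqcup_i g_i(T_i)$. Because the union is disjoint, the lattice points of $tS$ partition cleanly across the pieces, giving
\begin{equation*}
\mathrm{ehr}_S(t) = |tS \cap \mathbb{Z}^2| = \sum_i |tT_i \cap \mathbb{Z}^2|,
\end{equation*}
where the sum is over all simplices in $\mathcal{T}$. Second, I would apply the single-piece observation above to each $i$: since $\mathcal{F}|_{T_i} = g_i$ is a $G$-map, it restricts to a bijection $tT_i \cap \mathbb{Z}^2 \to g_i(tT_i) \cap \mathbb{Z}^2$, so $|tT_i \cap \mathbb{Z}^2| = |g_i(tT_i) \cap \mathbb{Z}^2|$. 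Summing over $i$ and using the disjointness of the image decomposition $\mathcal{T}'$ then yields
\begin{equation*}
\sum_i |tT_i \cap \mathbb{Z}^2| = \sum_i |g_i(tT_i) \cap \mathbb{Z}^2| = |tS' \cap \mathbb{Z}^2| = \mathrm{ehr}_{S'}(t),
\end{equation*}
which is the desired equality.

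The main subtlety—and what distinguishes this from the finite case already implicit earlier in the paper—is that the index set may be infinite, so I must ensure the rearrangement of the sum is legitimate. Here the boundedness hypothesis on $S$ does the work: since $S$ is bounded, $tS$ is bounded for each fixed positive integer $t$, hence $tS \cap \mathbb{Z}^2$ is a \emph{finite} set. Consequently all but finitely many of the terms $|tT_i \cap \mathbb{Z}^2|$ vanish, the sum $\sum_i |tT_i \cap \mathbb{Z}^2|$ is a finite sum of nonnegative integers, and the reindexing by $g_i$ is just a finite rearrangement. The same boundedness applied to $S'$ guarantees the image side is finite as well. I expect the only genuine care needed is in confirming that each $g_i$ commutes with the $t$-dilation appropriately (so that the bijection is between $tT_i \cap \mathbb{Z}^2$ and $(g_i(T_i))$ dilated, rather than between mismatched dilates); this is a direct computation with $t\,g_i(x) = U_i(tx) + tv_i$ and the integrality of $v_i$, so it is routine rather than an obstacle.
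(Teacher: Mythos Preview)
Your proposal is correct and is essentially the same argument as the paper's, just organized piecewise rather than globally: the paper packages your per-piece bijections $p \mapsto U_i p + t v_i$ into the single map $\Phi(p) = t\,\mathcal{F}(p/t)$ and argues injectivity/surjectivity directly, whereas you sum the lattice-point counts over the pieces. The key computation---that $t\,g_i(x) = U_i(tx) + tv_i$ with $tv_i \in \mathbb{Z}^2$, so lattice points are preserved under dilation---is identical in both, and your handling of the infinite index set via boundedness is exactly the finiteness remark the paper makes at the end.
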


\begin{proof}

Let $t$ be a positive integer. We want to construct a bijection $\Phi$ from $\mathbb{Z}^2 \cap tS$ to $\mathbb{Z}^2 \cap tS'$. Given $p \in \mathbb{Z}^2 \cap tS$, define $\Phi(p) = t \mathcal{F}(p/t)$. Injectivity of $\Phi$ is clear because both the scaling map $p \to p/t$, equidecomposability relation, and dilation map $\mathcal{F}(p/t) \to t \mathcal{F}(p/t)$ are all injective. Also, $t \mathcal{F}(p/t)$ is an integer point because $p$ is an integer point and $\mathcal{F}$ preserves denominators.

Now for surjectivity. Suppose $tq \in tS' \cap \mathbb{Z}^2$. Then $q \in S'$. Since $\mathcal{F}$ is surjective, there exists $p \in S$ such that $\mathcal{F}(p) = q$. Since $tp \in tP$, we observe that $\Phi(tp) = t \mathcal{F}(p) = tq$. Again, since $\mathcal{F}$ preserves denominators, if $tq$ is an integer point, it follows that $tp$ is an integer point, as desired.

Indeed, $\mathbb{Z}^2 \cap tS$ is in bijection with $\mathbb{Z}^2 \cap tS'$. Moreover, both sets are finite by the boundedness of $S$ and $S'$.

\end{proof}

We observe that the construction from Theorem \ref{thm:infinite} can be applied to a much larger family of pairs of triangles.

\begin{definition}
Let $T_l$ and $T_r$ be two minimal triangles with denominator $d$. The pair of triangles $T_l$ and $T_r$ are called \emph{similar neighbors} if $T_l$ is G-equivalent to $T_l'$ and $T_r$ is G-equivalent to $T_r'$, where $T_l' = \mathrm{Conv} \left((\frac{1+t}{d},\frac{1}{d}), (\frac{1+t}{d}, 0), (\frac{t}{d}, \frac{1}{d}) \right)$, $T_r' = \mathrm{Conv} \left((\frac{2+t}{d},\frac{1}{d}), (\frac{2+t}{d}, 0), (\frac{1+t}{d}, \frac{1}{d}) \right)$ for some fixed integer t, where $1+t$, $2+t$ and $d$ are pairwise relatively prime, and $3+2t \neq 0$ mod $d$.\footnote{When $3+2t = 0$ mod $d$, $T_l$ and $T_r$ are actually G-equivalent, so we exclude this case.}

\end{definition}

\begin{remark}
Observe that when $d<5$ there are no such similar neighbors. One example of similar neighbors, the pair of our favorite two triangles $T_{(1,2)}$ and $T_{(1,4)}$ is the similar neighbors with the smallest denominator.
\end{remark}
The following figure \ref{fig:sim_neighbors} illustrates the case when $d=6$.

\begin{figure}[H]
    \centering
    \includegraphics[keepaspectratio=true, width=8cm]{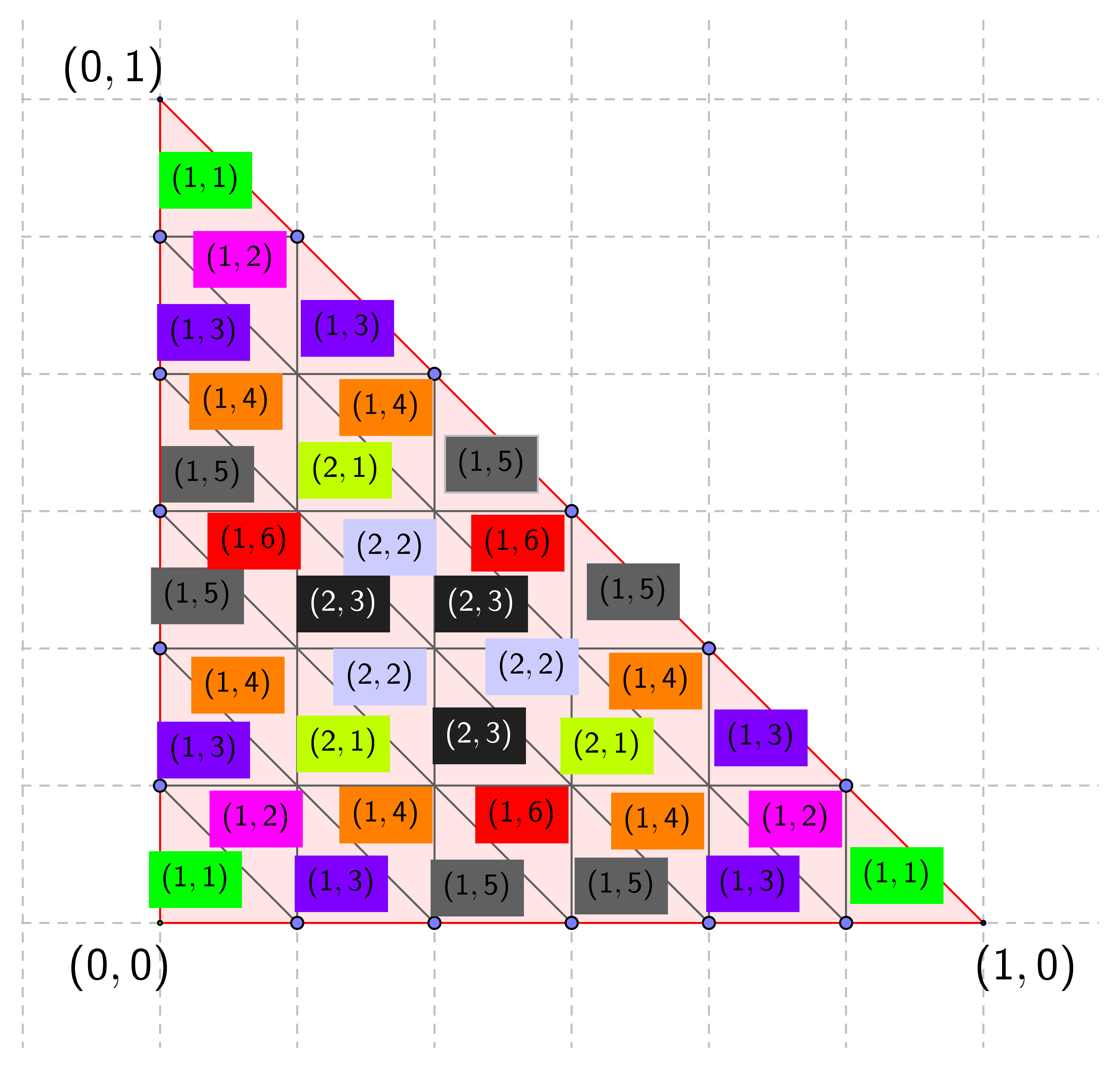}
    \caption{\small  $T_{(1,2)}$, $T_{(1,4)}$ and $T_{(1,6)}$ don't have a 6-primitive vertices on x-axis, thus they don't form similar neighbors. There does not exist a pair of similar neighbors when when $d=6$. }
    \label{fig:sim_neighbors}
\end{figure}

\begin{theorem}
If two triangles $T_l$ and $T_r$ are similar neighbors, then they share the same Ehrhart quasi-polynomials, but are not equidecomposable.
\end{theorem}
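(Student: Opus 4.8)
The plan is to prove the two assertions separately, exploiting the machinery already built up in the paper. For the \emph{non-equidecomposability} half, I would appeal directly to Theorem \ref{thm:invariant}: since $T_l$ and $T_r$ are $d$-minimal triangles, it suffices to compute their weights $W(T_l)$ and $W(T_r)$ and show they differ. By $G$-invariance of the weight (Proposition \ref{prop:weight_invariant}), I may replace $T_l, T_r$ by their standard representatives $T_l', T_r'$ from the definition of similar neighbors. A direct application of Definition \ref{def:weight} to the three oriented edges of each triangle gives the edge-weights as determinants modulo $d$; the expectation is that $W(T_l') = \{\,-1,\,1+t,\,-(t)\,\}$ and $W(T_r') = \{\,-1,\,2+t,\,-(1+t)\,\}$ (up to recomputation and reduction mod $d$), matching the earlier calculation $W_5(T_{(1,2)}) = \{1,1,-1\}$ versus $W_5(T_{(1,4)}) = \{2,-2,1\}$ in the case $t=1$, $d=5$. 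The hypotheses that $1+t$, $2+t$, $d$ are pairwise coprime and $3+2t \not\equiv 0 \bmod d$ should be exactly what guarantees these two multisets are distinct (the excluded case $3+2t\equiv 0$ being where they coincide and the triangles become $G$-equivalent). Hence $W(T_l)\neq W(T_r)$, and Theorem \ref{thm:invariant} yields non-equidecomposability.

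For the \emph{Ehrhart equivalence} half, the cleanest route is to reuse the infinite equidecomposability construction of Theorem \ref{thm:infinite} rather than to compute quasi-polynomials by hand. The construction there maps $T_{(1,2)}$ minus a closed edge onto $T_{(1,4)}$ minus a closed edge via the shearing maps $U_i = \left(\begin{smallmatrix} 1 & i \\ 0 & 1 \end{smallmatrix}\right)$, fanning out from the pivot vertex on the $x$-axis. First I would observe that the standard forms $T_l'$ and $T_r'$ have precisely the same configuration: a shared lower vertex structure on $y=0$, a common edge, and a row of lattice points on the line $y = 1/d$ through which one triangle is sheared onto the other. I would replay the cut-and-paste argument verbatim with $d$ in place of $5$ and $t$ in place of $1$: cut both triangles into the regions $R_i$ (resp.\ $S_i$) by drawing lines from the pivot through the lattice points on $y=1/d$, and let $U_i$ shear $S_i$ onto $R_i$. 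The coprimality hypotheses ensure the relevant vertices are $d$-primitive so that these edges are genuinely $d$-minimal segments and the shears are lattice-preserving.

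Once this infinite equidecomposability relation between the edge-deleted complexes is in hand, Proposition \ref{prop:ehr_fn} immediately gives $\mathrm{ehr}_{T_l - e}(t) = \mathrm{ehr}_{T_r - e'}(t)$ for the half-open complexes. To recover equality for the \emph{closed} triangles, I would account for the deleted edges separately: the deleted edge of $T_l$ and the deleted edge of $T_r$ are themselves $d$-minimal segments lying in the same $G$-orbit (indeed related by the transformation $U$ that carries $e_1\to e_2$, $e_3\to e_4$ in the base case), so they contribute identical lattice-point counts in every dilate. Adding these equal boundary contributions back to the equal interior counts yields $\mathrm{ehr}_{T_l}(t) = \mathrm{ehr}_{T_r}(t)$ for all $t$, and since both are rational polygons of denominator $d$ their Ehrhart functions are quasi-polynomials by Ehrhart's theorem, so they share the same Ehrhart quasi-polynomial.

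The main obstacle I anticipate is the bookkeeping in the cutting-and-pasting generalization: verifying that for arbitrary $t$ and $d$ (subject to the coprimality and $3+2t\not\equiv 0$ conditions) the lattice points on the line $y=1/d$ genuinely produce the nested family of regions $R_i, S_i$ with the incidence structure needed for the shears $U_i$ to match open faces to open faces bijectively. In the explicit $d=5$ case this is checked by coordinates; the care required here is to confirm that the pivot vertices remain $d$-primitive and that no unexpected lattice point falls inside a region (which is where $d$-minimality and the coprimality hypotheses must be invoked), so that the decomposition is a valid $\Delta$-complex decomposition and the relation is a genuine bijection. The handling of the deleted boundary edges to pass from the half-open to the closed statement is routine by comparison but must be stated carefully to avoid an off-by-one in the lattice-point count.
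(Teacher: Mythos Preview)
Your overall plan matches the paper's: use the weight invariant for non-equidecomposability, and reuse the infinite shearing construction of Theorem \ref{thm:infinite} plus a separate argument for the deleted edges to get Ehrhart equivalence. Two corrections, one minor and one substantive.

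Minor: the base case is $t=0$, not $t=1$ (check the vertices), and your provisional weights are off. The correct values are $W_d(T_l') = \{1+t,\,1,\,-(1+t)\}$ and $W_d(T_r') = \{2+t,\,1,\,-(2+t)\}$, which specialize to $\{1,1,-1\}$ and $\{2,1,-2\}$ at $t=0$, $d=5$. Your hedge ``up to recomputation'' covers this, and the inequality argument via $3+2t\not\equiv 0$ then goes through exactly as in the paper.

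The substantive gap is your treatment of the deleted edges. You assert that the deleted edge of $T_l$ and the deleted edge of $T_r$ lie in the same $G$-orbit, citing the map $U$ from Theorem \ref{thm:infinite}. But that $U$ sends $e_1\mapsto e_2$ and $e_3\mapsto e_4$, i.e.\ it relates two edges \emph{within the same triangle}; it does not relate an edge of $T_l$ to an edge of $T_r$. In fact the deleted vertical edges $e_l$ (at $x=(1+t)/d$) and $e_r$ (at $x=(2+t)/d$) have weights $1+t$ and $2+t$ respectively, and since $3+2t\not\equiv 0\pmod d$ these are unequal even up to sign. By Proposition \ref{prop:weight_inv} the two segments are \emph{not} $G$-equivalent, so your argument for equal boundary contributions fails as stated.

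The paper closes this gap differently: it observes that $e_l$ and $e_r$ are both vertical segments of height $1/d$ whose $x$-coordinates have numerators coprime to $d$. The horizontal projection $e_l\to e_r$ is therefore a (non-$G$) bijection which, after dilation by any $n$, matches integer points to integer points; concretely, $n e_l$ and $n e_r$ both contain lattice points if and only if $d\mid n$, in which case each contains $n/d+1$ of them. This elementary count replaces your $G$-orbit claim and is where the coprimality hypothesis on $1+t$ and $2+t$ is actually used.
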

\begin{proof}
WLOG, let $T_l = \mathrm{Conv} \left((\frac{1+t}{d},\frac{1}{d}), (\frac{1+t}{d}, 0), (\frac{t}{d}, \frac{1}{d}) \right)$, $T_r = \mathrm{Conv} \left((\frac{2+t}{d},\frac{1}{d}), (\frac{2+t}{d}, 0), (\frac{1+t}{d}, \frac{1}{d}) \right)$. First let's show they share the same Ehrhart quasi-polynomials. WLOG delete the edge $e_l$ with endpoints $(\frac{1+t}{d}, 0), (\frac{1+t}{d}, \frac{1}{d})$ from $T_l$ and edge $e_r$ with endpoints $(\frac{2+t}{d}, 0), (\frac{2+t}{d}, \frac{1}{d})$ from $T_r$. By the same construction in \ref{thm:infinite}, we can map the $\Delta$-complex $(T_l-e_l)$ to $(T_r-e_s)$. By Proposition \ref{prop:ehr_fn}, $(T_l-e_l)$ and $(T_r-e_s)$ share the same Ehrhart quasi-polynomial. It remains to show the edge $e_l$ and $e_r$ have the same Ehrhart quasi-polynomial.  Because $1+t$, $2+t$ and $d$ are relatively prime, we have a bijection between the denominator $d'$ points in $e_l$ and the denominator $d'$ points\footnote{By \textit{denominator $d'$-point}, we mean a point that lies in the lattice $\mathcal{L}_{d'} = \frac{1}{d'} \mathbb{Z} \times \frac{1}{d'} \mathbb{Z}$.} in $e_r$ by projecting the segment $e_l$ onto $e_r$. Thus $P$ and $Q$ share the same Ehrhart quasi-polynomial.

To show they are not equidecomposable, we compute $W_d(T_l) = \{1+t,1,-1-t \}$ and $W_d(T_{(1,2)}) = \{2+t, 1, -2-t \}$. Since $3+2t \neq 0$ mod $d$, $W_d(T_l) \neq W_d(T_r)$. This completes the proof.

\end{proof}

\end{subsection}

\begin{subsection}{Intractable and Tractable Equidecomposability Relations}

Referring back to Theorem \ref{thm:infinite}, the question remains as to why we must delete an edge from each triangle. Since we allow decompositions consisting of infinitely many pieces, can we also find an infinite equidecomposable relation between $e_1$ and $e_3$? This depends on the restrictions on decompositions, leading us to consider special types of decompositions.

\begin{conjecture}
\label{conj:infinite}
Let $S$ and $S'$ be bounded subsets of $\mathbb{R}^2$ with $\Delta$-complex decompositions $\mathcal{T}$ and $\mathcal{T}'$, respectively. There exists an equidecomposability relation $\mathcal{F}: (S, \mathcal{T}) \to (S', \mathcal{T}')$ if only if

\begin{equation*}
\mathrm{ehr}_S(t) = \mathrm{ehr}_{S'}(t).
\end{equation*}
\end{conjecture}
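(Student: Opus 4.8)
The forward implication of the conjecture is already in hand: Proposition \ref{prop:ehr_fn} shows that any equidecomposability relation $\mathcal{F}:(S,\mathcal{T})\to(S',\mathcal{T}')$ forces $\mathrm{ehr}_S(t)=\mathrm{ehr}_{S'}(t)$, even when the decomposition has infinitely many pieces. So the entire content of the conjecture lies in the converse, and my plan is to attack that direction by manufacturing a piecewise-$G$ bijection out of the hypothesis $\mathrm{ehr}_S=\mathrm{ehr}_{S'}$.

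First I would extract from the Ehrhart hypothesis a finer numerical invariant. Writing $\mathcal{L}_t=\tfrac1t\mathbb{Z}^2$, we have $\mathrm{ehr}_S(t)=|S\cap\mathcal{L}_t|$, and if $c_S(k)$ denotes the number of points of $S$ whose exact denominator is $k$, then $\mathrm{ehr}_S(t)=\sum_{k\mid t}c_S(k)$. Möbius inversion over the divisibility lattice recovers each $c_S(k)$ from the function $\mathrm{ehr}_S$, so the hypothesis upgrades to $c_S(k)=c_{S'}(k)$ for every $k$. Because any $G$-map preserves each lattice $\mathcal{L}_t$ and hence the exact denominator of a rational point, this census is precisely the constraint a relation must respect on the rational skeleton; moreover any two rational points of equal denominator are genuinely $G$-equivalent (this reduces to transitivity of $GL_2(\mathbb{Z}/k\mathbb{Z})$ on vectors of content coprime to $k$, together with surjectivity of $GL_2(\mathbb{Z})\to GL_2(\mathbb{Z}/k\mathbb{Z})$). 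Thus the hypothesis is at least consistent with a denominator-preserving bijection of $S\cap\mathbb{Q}^2$ onto $S'\cap\mathbb{Q}^2$.

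The substance of the conjecture, however, is not the rational skeleton but the demand that the matching be realized by $G$-maps on honest open simplices that also cover the irrational, positive-area part of each region. Here I would try to generalize the exhaustion scheme of Theorem \ref{thm:infinite}: peel off $d'$-minimal simplices from $S$ and from $S'$ and pair them by $G$-maps in a back-and-forth fashion, at each stage consuming the smallest-denominator unmatched feature and invoking the equalities $c_S(k)=c_{S'}(k)$ to guarantee that a partner of the correct type is available, iterating so that every point of $S$ and of $S'$ is eventually absorbed. The infinitude of pieces is essential: it is exactly what should let the construction sidestep the weight invariant $W_d$ of Theorem \ref{thm:invariant}, whose proof relies on the finite cancellation in Lemma \ref{lem:charge} over finitely many facets.

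The hard part, and the reason this remains a conjecture, is showing that such a back-and-forth can always be driven to a genuine bijection of the full sets. One must control the geometry simultaneously at all scales so that the infinitely many local $G$-maps neither overlap nor leave residue, and one must reconcile the $2$-dimensional strata with the edge and vertex strata compatibly. The edge-deletion hypothesis in Theorem \ref{thm:infinite} is a warning that this boundary bookkeeping is delicate: the very construction that succeeds after removing a single closed edge fails on the closed triangles, so a general argument may be forced to admit irrational simplices or a more flexible limiting decomposition. Producing a uniform mechanism that guarantees the exhaustion covers both sets in the limit is, in my view, the central obstacle, and I am not aware of an existing technique that supplies it in full generality.
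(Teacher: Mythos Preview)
This statement is labeled a \emph{conjecture} in the paper, not a theorem: the paper establishes only the forward implication (via Proposition~\ref{prop:ehr_fn}) and explicitly leaves the converse open, remarking that the difficulty lies in tracking the irrational points under an arbitrary decomposition. Your proposal matches this position exactly --- you invoke the same proposition for the forward direction, sketch some additional heuristics (M\"obius inversion on denominators, a back-and-forth exhaustion) toward the converse, and correctly conclude that the converse remains unproved.
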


\begin{remark}
We conjecture that the Ehrhart function is a necessary and sufficient criterion for equidecomposability if we allow arbitrary decompositions. We proved the forward direction. The key difficulty of the backward direction is that it is hard to keep track of the irrational points in an arbitrary decomposition, although their Ehrhart function is trivially 0.
\end{remark}

This raises a question, what kind of decomposition should be considered?  We would prefer a more restricted decomposition that could be constructed explicitly.

\begin{definition}
Let $S$ and $S'$ be bounded subsets of $\mathbb{R}^2$ with $\Delta$-complex decompositions $\mathcal{T}$ and $\mathcal{T'}$, respectively, consisting entirely of 0-simplices (e.g., $S=\bigsqcup S_i$ where $S_i$ is a vertex), and $\mathcal{F}: (S, \mathcal{T}) \to (S', \mathcal{T}')$ is an equidecomposability relation. Recall that $\mathcal{F}$ may be regarded as an assignment of $G$-maps $g_i \in G$ to the points $S_i$ in $\mathcal{T}$. We say that $g_i \in \mathcal{F}$ if $g_i$ is  assigned to some face in $\mathcal{T}$. Then we define a \emph{maximal subcomplex} $M(g_i)$ for every $g_i \in \mathcal{F}$ to be the set consisting of 0-simplices as follows,
    \begin{equation*}
    M(g_i) := \{S_j|g_i\ acts\ on\ S_j\ sending\ to\ S_j'\ ,\ S_j \in \mathcal{T}\ S_j' \in \mathcal{T'}\}
    \end{equation*}
    Furthermore, we define a \emph{maximal decomposition} of $\mathcal{F}$ as follows,
    \begin{equation*}
    \mathcal{T}_M(\mathcal{F}) := \{M(g_i)|g_i\in\ \mathcal{F}\}.
    \end{equation*}
\end{definition}

To be clear, $M(g_i)$ is the union of all vertices assigned by the same $g_i \in \mathcal{F}$, and it is in general uncountable.

\begin{remark}
Observe that even though the cardinality of $\mathcal{T}$ is uncountable in general, the number of distinct $G$-maps occuring in $\mathcal{F}$ is a countable set at most, because the group $G$ is countable. Therefore $\mathcal{T}_M(\mathcal{F})$ is also countable. This implies some of the maximal subcomplexes contain uncountably many 0-simplices. We only need to worry about decompositions formed by restricting to the structures of these maximal subcomplexes.
\end{remark}

\begin{definition}

Let $S$ and $S'$ be bounded subsets of $\mathbb{R}^2$ with some $\Delta$-complex decompositions $\mathcal{T}$ and $\mathcal{T'}$, respectively, and suppose there exists an equidecomposability relation $\mathcal{F}: (S, \mathcal{T}) \to (S', \mathcal{T}')$. If every maximal subcomplex $M(g_i)$ induced by this equidecomposability relation is a disconnected 0-simplex space, and $S$ and $S'$ are not a finite set of points, then this equidecomposability relation is called an \emph{intractable} equidecomposability relation. Otherwise, it is referred to as \emph{tractable}.
\end{definition}

\begin{remark}
Here we make a broader definition of equidecomposability relation where we allow a maximal decomposition $\mathcal{T}_M(\mathcal{F})$ to be a countable set instead of a finite set as in Definition \ref{def:equi}.
\end{remark}

However, the Ehrhart function turns out to not be a necessary and sufficient criterion even for a tractable equidecomposability relation. It can easily be computed by hand that $\mathrm{ehr}_{e_1}(t) = \mathrm{ehr}_{e_3}(t)$. However, the following shows that the two segments are not tractably equidecomposable.

\begin{proposition}{There does not exist a tractable equidecomposability between edges $e_1$ with endpoints $(\frac{1}{5},0)$, $(0,\frac{1}{5})$ and $e_3$ with endpoints $(\frac{2}{5},0)$, $(\frac{1}{5},\frac{1}{5})$}
\end{proposition}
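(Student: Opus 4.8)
The plan is to argue by contradiction, using the $G$-invariance (up to sign) of the edge weight from Proposition \ref{prop:weight_inv}. The essential geometric observation is that $e_1$ and $e_3$ lie on \emph{parallel} lines of slope $-1$: the segment $e_1$ lies on the line $x+y=\tfrac{1}{5}$ and $e_3$ on the line $x+y=\tfrac{2}{5}$. By the lattice-distance interpretation of the weight (Proposition \ref{prop:lattice_distance}), writing $d'=5n$, every $d'$-minimal sub-segment of $e_1$ has weight $\pm n \bmod 5n$ while every $d'$-minimal sub-segment of $e_3$ has weight $\pm 2n \bmod 5n$. Since $n \not\equiv \pm 2n \bmod 5n$ for every $n$ (the two congruences reduce to $n\equiv 0$ and $3n \equiv 0 \bmod 5n$, both false), no $G$-map can carry a minimal sub-segment of $e_1$ onto one of $e_3$. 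The whole difficulty is therefore to produce such a sub-segment from the tractability hypothesis.

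First I would suppose, for contradiction, that a tractable equidecomposability relation $\mathcal{F}\colon (e_1,\mathcal{T}) \to (e_3,\mathcal{T}')$ exists, where $\mathcal{T}$ and $\mathcal{T}'$ consist of $0$-simplices. Since $e_1$ and $e_3$ are not finite point sets, tractability forces some maximal subcomplex $M(g_i)$ to fail to be a disconnected $0$-simplex space, i.e. to fail to be totally disconnected. Hence $M(g_i)$ has a connected component with more than one point; as a connected subset of the line carrying $e_1$, this component is a nondegenerate sub-interval $J \subseteq e_1$ on which $\mathcal{F}$ restricts to the single $G$-map $g_i$, with $g_i(J) \subseteq e_3$.

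Next I would extract an honest minimal segment inside $J$. Because $J$ has positive length and lies on $x+y=\tfrac{1}{5}$, for $d'=5n$ with $n$ sufficiently large $J$ contains two consecutive points of $\mathcal{L}_{d'}$, hence a $d'$-minimal segment $E \subseteq J$. As $g_i \in G$ preserves $\mathbb{Z}^2$ and therefore each refined lattice $\mathcal{L}_{d'}$, the image $g_i(E)$ is a $d'$-minimal segment; moreover it lies on the line $x+y=\tfrac{2}{5}$ carrying $e_3$, since every point of $M(g_i)$ is mapped by $g_i$ into $e_3$. Applying Proposition \ref{prop:weight_inv} gives $W(E) = \pm W(g_i(E))$, that is, $n \equiv \pm 2n \bmod 5n$, which is impossible. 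This contradiction shows that no tractable relation can exist.

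The step I expect to be the main obstacle is the second one: correctly converting the purely topological tractability hypothesis into the existence of a genuine positive-length sub-interval of $e_1$ on which a single $G$-map acts. Once such an interval is in hand, the remainder is routine, reducing to the arithmetic fact $n \not\equiv \pm 2n \pmod{5n}$. A secondary point meriting care is confirming that the image of a minimal sub-segment of $e_1$ genuinely lands on the line carrying $e_3$, so that its weight is the expected $\pm 2n$; this is immediate here because $g_i$ sends all of $M(g_i)$ into $e_3$.
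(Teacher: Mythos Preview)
Your proof is correct and follows essentially the same approach as the paper: use tractability to extract a sub-interval of $e_1$ on which a single $G$-map $g_i$ acts, then obtain a contradiction from weight invariance. The paper's version is slightly more direct: rather than refining to $d'=5n$ and locating a $d'$-minimal segment inside $J$, it observes that the affine map $g_i$ carries the whole $5$-minimal segment $e_1$ to the unique $5$-minimal segment containing $g_i(J)$, namely $e_3$ itself, and then invokes $W(e_1)=\pm 1 \not\equiv \pm 2 = W(e_3) \pmod 5$.
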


\begin{proof} Suppose there exists a tractable equidecomposability relation $\mathcal{F}: (e_1, \mathcal{T}_1) \to (e_3, \mathcal{T}_3)$ for some $\Delta$-complex decompositions $\mathcal{T}_1$ and $\mathcal{T}_3$ of $e_1$ and $e_3$, respectively. Then $\mathcal{T}_1$ must contain a maximal subcomplex whose union contains an open segment $e'$, because $e_1$ consists of an uncountable number of points. By the discussion preceding Remark \ref{rmk:denominator}, $e'$ is assigned a $G$-map $g$ with the property

\begin{equation*}
\mathcal{F}(e') = g(e').
\end{equation*}

Moreover, $g(e')$ is an open segment in $\mathcal{T}_3$. Recall that $G$-maps send $5$-minimal segments to $5$-minimal segments. Therefore, since $e' \subset e_1$, $g(e_1)$ is the $5$-minimal segment containing $g(e')$. However, there can be at most one $5$-minimal segment containing a given open segment. It follows that $g(e_1) = e_3$. Yet this is a contradiction because $\pm 1 = W(e_1) \neq W(e_3) = \pm 2 \mod 5$ and Proposition \ref{prop:weight_inv} says that the weight of a minimal edge is preserved up to sign by $G$-maps.

\end{proof}

We recall Conjecture \ref{conj:infinite}. Now to be more precise, we conjecture the Ehrhart function is a necessary and sufficient criterion for equidecomposability if we allow intractable equidecomposability relations. An example of a possible maximal subcomplex in an intractable equidecomposability relation is Cantor set, which is a disconnected space of points that has uncountable cardinality. We also wonder if there exists a tractable equidecomposability relation between a pair of similar neighbors.
\end{subsection}
\end{section}

\begin{section}{Further Questions}
\label{sec:questions}

We briefly summarize some questions and directions for further inquiry, restricting to the case of polygons in accordance with the style of this paper.

\begin{enumerate}

\item At the end of Section \ref{sec:weight}, we observed that $d$-minimal triangles are equidecomposable if and only if they are $G$-equivalent. What other polygons or regions have this type of property? Moreover, are $G$-equivalences the only possible type of equidecomposability relations between the interiors of $d$-minimal triangles? The former question is part of a conjecture of Greenberg \cite{greenberg}. A positive answer would say, intuitively speaking, that $d$-minimal triangles behave like ``atoms''.

\smallskip

\item What invariants can be found for equidecomposability of polygons if we allow irrational simplices? What if we allow an infinite amount of simplices? Is there some way to generalize our progress to these cases? Does there exist a tractable equidecomposablility relation between $T_{(1,2)}$ and $T_{(1,4)}$? Questions about infinite equidecomposability relations were visited in Section \ref{sec:infinite}. In particular, see Conjecture \ref{conj:infinite}.

\end{enumerate}

\end{section}

\begin{section}{Acknowledgments}
This research was conducted during Summer@ICERM 2014 at Brown University and was generously supported by a grant from the NSF. First and foremost, the authors express their deepest gratitude to Sinai Robins for introducing us to the problem, meeting with us throughout the summer to discuss the material in great detail, suggesting various useful approaches, answering our many questions, and critically evaluating our findings. In the same vein, we warmly thank our research group's TA's from the summer, Tarik Aougab and Sanya Pushkar, for numerous beneficial conversations, suggestions, and verifying our proofs. In addition, we thank the other TA's, Quang Nhat Le and Emmanuel Tsukerman, for helpful discussions and ideas. We also thank Tyrrell McAllister for visiting ICERM, providing an inspiring week-long lecture series on Ehrhart theory, and discussing with us our problems and several useful papers in the area. We are grateful to Jim Propp for a very long, productive afternoon spent discussing numerous approaches and potential invariants for discrete equidecomposability. We also thank Hugh Thomas, who is a professor at the second author's home university, for interesting discussions as well as carefully reviewing and commenting on drafts of these results. We extend our gratitude to Michael Mossinghoff and again to Sinai Robins for coordinating the REU. Finally, we thank the ICERM directors, faculty, and staff for providing an unparalleled research atmosphere with a lovely view.
\end{section}

\bibliographystyle{alphaurl}
\bibliography{discrete_equi_of_polygons}

\end{document}